	\tikzstyle{fnode}=[fill=black,draw=black,circle,scale=\s]
\let\orgdescriptionlabel\descriptionlabel
\renewcommand*{\descriptionlabel}[1]{%
  \let\orglabel\label
  \let\label\@gobble
  \phantomsection
  \protected@edef\@currentlabel{#1\unskip}%
  \let\label\orglabel
  \orgdescriptionlabel{(#1)}%
}
\newtheorem{theorem}{Theorem}[section]
\newtheorem{proposition}[theorem]{Proposition}
\newtheorem{conjecture}[theorem]{Conjecture}
\newtheorem{lemma}[theorem]{Lemma}
\newtheorem{corollary}[theorem]{Corollary}
\theoremstyle{remark}
\newtheorem{example}[theorem]{Example}
\newtheorem{remark}[theorem]{Remark}
\newcommand{\defn}[1]{{\color{green!50!black}\emph{#1}}}
\newcommand{\defs}{\stackrel{\mathrm{def}}{=}}
\newcommand{\ie}{i.e.\;}
\renewcommand{\th}{^{\mathsf{th}}}
\newcommand{\st}{^{\mathsf{st}}}
\newcommand{\Tri}{\mathsf{Tri}}
\newcommand{\Hoch}{\textsf{\textbf{Hoch}}}
\newcommand{\Free}{\mathsf{Free}}
\newcommand{\comp}{\leq_{\mathsf{comp}}}
\newcommand{\compless}{\lessdot_{\mathsf{comp}}}
\newcommand{\perspective}{\doublebarwedge}
\newcommand{\CLO}{\textsf{\textbf{CLO}}}
\newcommand{\CJC}{\mathsf{CJC}}
\newcommand{\link}{\mathsf{link}}
\newcommand{\del}{\mathsf{del}}
\newcommand{\Bool}{\textsf{\textbf{Bool}}}
\newcommand{\Tamari}{\textsf{\textbf{Tam}}}
\newcommand{\Can}{\mathsf{Can}}
\newcommand{\Atom}{\mathsf{Atom}}
\newcommand{\Cov}{\mathsf{Cov}}
\newcommand{\Core}{\mathsf{C}}
\newcommand{\CP}{\mathsf{CP}}
\renewcommand{\neg}{\mathsf{neg}}
\newcommand{\pos}{\mathsf{pos}}
\newcommand{\Lattice}{\mathbf{L}}
\newcommand{\Galois}{\mathsf{Galois}}
\newcommand{\Poset}{\mathbf{P}}
\newcommand{\least}{\hat{0}}
\newcommand{\grtst}{\hat{1}}
\newcommand{\rk}{\mathsf{rk}}
\newcommand{\ZetaPol}{\mathcal{Z}}
\newcommand{\AC}{\mathsf{Anti}}
\newcommand{\len}{\mathsf{len}}
\newcommand{\MI}{\mathsf{MeetIrr}}
\newcommand{\JI}{\mathsf{JoinIrr}}
\newcommand{\JIPoset}{\textsf{\textbf{JoinIrr}}}
\newcommand{\Covers}{\mathsf{Covers}}
\newcommand{\jsdlabeling}{\lambda_{\mathsf{jsd}}}
\newcommand{\Shuffle}{\mathsf{Shuf}}
\newcommand{\ShufflePoset}{\textsf{\textbf{Shuf}}}
\newcommand{\ab}{\mathbf{a}}
\newcommand{\bb}{\mathbf{b}}
\newcommand{\wb}{\mathbf{w}}
\newcommand{\one}{\mathbb{1}}
\newcommand{\nil}{\varepsilon}
\newcommand{\afr}{\mathfrak{a}}
\newcommand{\bfr}{\mathfrak{b}}
\newcommand{\jfr}{\mathfrak{j}}
\newcommand{\mfr}{\mathfrak{m}}
\newcommand{\ofr}{\mathfrak{o}}
\newcommand{\tfr}{\mathfrak{t}}
\newcommand{\ufr}{\mathfrak{u}}
\newcommand{\vfr}{\mathfrak{v}}
\newcommand{\wfr}{\mathfrak{w}}
\newcommand{\Jb}{\mathbf{J}}
\newcommand{\idls}[3]{
	\begin{tikzpicture}[scale=#3]\small
		\def\d{.5};
		\def\s{.8*#3};
		\draw(4.6*\d,4.6*\d) node{};
		\draw(1*\d,1*\d) node[draw,circle,scale=\s](a1){};
		\draw(1*\d,2*\d) node[draw,circle,scale=\s](a2){};
		\draw(1*\d,3*\d) node[draw,circle,scale=\s](a3){};
		\draw(1*\d,4*\d) node[draw,circle,scale=\s](a4){};
		\draw(2*\d,1*\d) node[draw,circle,scale=\s](b2){};
		\draw(3*\d,1*\d) node[draw,circle,scale=\s](b3){};
		\draw(4*\d,1*\d) node[draw,circle,scale=\s](b4){};
		\draw(a1) -- (a2) -- (a3) -- (a4);
		\draw(b2) -- (a2);
		\foreach \a/\b in {#1}{
			\draw(\a*\d,\b*\d) node[fill,circle,scale=\s]{};
			\ifthenelse{\equal{\b}{1}}
			{\draw(\a*\d,\b*\d) node[draw=red,circle,scale=1.75*\s]{};}
			{}
		}
		\draw(3.5*\d,3.5*\d) node{#2};
	\end{tikzpicture}
}
\title{Hochschild lattices and shuffle lattices}
\author{Henri M{\"u}hle}
\address{Technische Universit{\"a}t Dresden, Institut f{\"u}r Algebra, Zellescher Weg 12--14, 01069 Dresden, Germany.}
\email{henri.muehle@tu-dresden.de}
\keywords{Hochschild lattice, dexter order, shuffle lattice, Galois graph, canonical join complex, core label order, M-triangle, H-triangle, F-triangle}
\subjclass[2010]{06D75, 05A19, 05E45}
\begin{document}

\allowdisplaybreaks

\begin{abstract}
	In his study of a Hochschild complex arising in connection with the free loop fibration, S.~Saneblidze defined the freehedron, a certain polytope constructed via a truncation process from the hypercube.  It was recently conjectured by F.~Chapoton and proven by C.~Combe that a certain orientation of the $1$-skeleton of the freehedron carries a lattice structure.  The resulting lattice was dubbed the Hochschild lattice and is congruence uniform and extremal.  These properties allow for the definition of three associated structures: the Galois graph, the canonical join complex and the core label order.  In this article, we study and characterize these structures.  We exhibit an isomorphism from the core label order of the Hochschild lattice to a particular shuffle lattice of C.~Greene.  We also uncover an enumerative connection between the core label order of the Hochschild lattice, a certain order extension of its poset of irreducibles and the freehedron.  These connections nicely parallel the situation surrounding the better-known Tamari lattices, noncrossing partition lattices and associahedra.
\end{abstract}

\maketitle

\section{Introduction}
	\label{sec:introduction}
In \cite{saneblidze09bitwisted}, S.~Saneblidze introduced the \defn{freehedron} $\Free(n)$, an $n$-dimensional polytope obtained from the $n$-dimensional hypercube via a certain truncation process.  

In \cite{chapoton20some}, F.~Chapoton defined a new partial order on the set of Dyck paths of semilength $n$; the \defn{dexter order}.  He observed that the Dyck paths in a certain interval of this poset are in bijection with the vertices of the freehedron.  This bijection encodes the vertices of $\Free(n)$ as certain $n$-tuples with entries in $\{0,1,2\}$.  Chapoton conjectured that the relevant interval in the dexter order is actually isomorphic to the orientation of the $1$-skeleton of $\Free(n)$ induced by the componentwise order of these $n$-tuples.

This conjecture was settled by C.~Combe in \cite{combe20geometric}.  In fact, she showed that this orientation of the $1$-skeleton of $\Free(n)$ constitutes the poset diagram of a congruence-uniform, extremal lattice; the \defn{Hochschild lattice}\footnote{The terminology stems from the fact that the freehedron arises in the study of a Hochschild complex arising in the context of the free loop fibration.} $\Hoch(n)$.  These are two intriguing combinatorial lattice properties: \defn{extremal} means that the number of join- and meet-irreducibles of $\Hoch(n)$ equals the length of $\Hoch(n)$, and \defn{congruence uniform} means that $\Hoch(n)$ can be constructed from the singleton lattice by a sequence of interval doublings.  See Section~\ref{sec:lattices} for the precise definitions.

Following \cite{markowsky92primes}, any extremal lattice is uniquely determined by a certain directed graph---the \defn{Galois graph}---much like a distributive lattice is determined by its poset of join-irreducibles.  Our first main result characterizes the Galois graph of $\Hoch(n)$.

\begin{theorem}\label{thm:hochschild_galois_graph}
	For $n>0$, the Galois graph of $\Hoch(n)$ is isomorphic to the directed graph $(V,E)$ with $V=\bigl\{(1,1),(1,2),\ldots,(1,n),(2,2),(2,3),\ldots,(2,n)\bigr\}$ which has an edge $(s,t)\to(s',t')$ if and only if $(s,t)\neq (s',t')$ and 
	\begin{itemize}
		\item either $s=2$, $s'=1$ and $t=t'$,
		\item or $s=s'=1$ and $t>t'$.
	\end{itemize}
\end{theorem}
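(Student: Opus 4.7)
The plan is to work with the encoding of vertices of $\Hoch(n)$ as $n$-tuples with entries in $\{0,1,2\}$ and the componentwise order. For an extremal lattice $L$ of length $\ell$, Markowsky's construction produces a canonical bijection between join- and meet-irreducibles, and the Galois graph $G(L)$ has the join-irreducibles as vertices with an edge $j\to j'$ whenever $j\neq j'$ and $j\not\leq m_{j'}$, where $m_{j'}$ is the meet-irreducible paired with $j'$. Thus the proof reduces to three ingredients: (i)~enumerate the join-irreducibles and meet-irreducibles of $\Hoch(n)$; (ii)~establish the canonical extremal bijection between them; and (iii)~verify the non-containment condition case by case.

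For step~(i), the natural candidates for join-irreducibles are the tuples with a single nonzero coordinate. For $t\in\{1,\ldots,n\}$ let $j_{(1,t)}$ be the tuple with a $1$ in coordinate $t$ and zeroes elsewhere; for $t\in\{2,\ldots,n\}$ let $j_{(2,t)}$ be the tuple with a $2$ in coordinate $t$ and zeroes elsewhere (the analogous tuple for $t=1$ is not a vertex of the freehedron). This should give the $2n-1$ join-irreducibles predicted by extremality, indexed precisely by the vertex set in the theorem. I would then describe the meet-irreducibles $m_{(s,t)}$ dually and exhibit the canonical pairing $j_{(s,t)}\leftrightarrow m_{(s,t)}$ by producing a maximal chain $\least=x_0\lessdot x_1\lessdot\cdots\lessdot x_{2n-1}=\grtst$ of length $2n-1$ in which the $k\th$ cover both introduces a unique join-irreducible and exits a unique meet-irreducible; naming them by the position and value being modified in the tuple encoding yields the labeling of the theorem.

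For step~(iii), the two edge types then appear as two transparent failures of containment. The condition $s=2,s'=1,t=t'$ reflects that $j_{(2,t)}$ carries a $2$ in coordinate $t$ while $m_{(1,t)}$ carries at most a $1$ there, so $j_{(2,t)}\not\leq m_{(1,t)}$. The condition $s=s'=1$, $t>t'$ should reflect a structural constraint imposed by membership in the freehedron forcing $m_{(1,t')}$ to vanish in every coordinate strictly greater than $t'$, so that the coordinate-$t$ entry of $j_{(1,t)}$ lies above the corresponding entry of $m_{(1,t')}$. All remaining pairs $(s,t)\neq(s',t')$ should satisfy $j_{(s,t)}\leq m_{(s',t')}$, giving no edge.

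The main obstacle is step~(ii): pinning down the meet-irreducibles and the canonical extremal pairing. The componentwise order on $\{0,1,2\}^n$ is only a skeleton of $\Hoch(n)$, and many tuples are excluded from the freehedron, so identifying the meet-irreducibles and the spine maximal chain requires a careful traversal of the $1$-skeleton rather than a direct reading from the encoding. Once that bijection is set up correctly, the edge-checks in~(iii) collapse to componentwise comparisons between very sparse tuples, and the case analysis is routine.
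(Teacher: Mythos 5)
There is a genuine gap, and it starts at step~(i): your proposed join-irreducibles are not the join-irreducibles of $\Hoch(n)$. The tuple with a single $1$ in coordinate $t>1$ and zeroes elsewhere contains the forbidden $01$-pattern $(1,t)$ (its first letter is $0$ and a later letter is $1$), so it is not a triword and hence not even an element of $\Hoch(n)$. The elements of your list that do survive are $(1,0,\ldots,0)$ and the single-$2$ tuples $\bfr^{(t)}$, which are exactly the $n$ atoms; the remaining $n-1$ join-irreducibles are instead the prefix tuples $\afr^{(i)}=(1,\ldots,1,0,\ldots,0)$ with $i$ leading ones. Since your entire case analysis in step~(iii) is phrased in terms of "very sparse tuples" with one nonzero entry, the coordinatewise comparisons you describe are comparisons of the wrong objects, and the conclusion that the only failures of $j\leq m$ are the two listed edge types does not follow from what you wrote. (The intended edges do come out correctly for the true $\afr^{(i)}$, but that has to be re-derived.)

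The second problem is that step~(ii), which you correctly identify as the main obstacle, is left unresolved, and the paper's proof shows that it can be avoided entirely. Because $\Hoch(n)$ is congruence uniform as well as extremal, one has the criterion (Lemma~\ref{lem:extremal_congruence_uniform_galois}) that $j_{s}\not\leq m_{t}$ if and only if $s\neq t$ and $j_{t}\leq {j_{t}}_{*}\vee j_{s}$; this reformulates the edge condition purely in terms of join-irreducibles and the unique elements they cover, so no meet-irreducible and no spine maximal chain ever needs to be computed. With $\afr^{(i)}_{*}=\afr^{(i-1)}$, $\afr^{(1)}_{*}=\bfr^{(i)}_{*}=\ofr$, the three-case check ($\jfr'=\bfr^{(i)}$ gives no incoming edges; $\jfr'=\afr^{(1)}$ receives edges from all $\afr^{(i)}$ with $i>1$; $\jfr'=\afr^{(i)}$ with $i\geq 2$ receives edges from $\bfr^{(i)}$ and from $\afr^{(s)}$ with $s>i$) is a short computation of componentwise joins. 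If you want to keep your meet-irreducible route, you would have to actually exhibit the $m_{(s,t)}$ and the pairing, which is considerably more work than the criterion above.
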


Any element of a congruence-uniform lattice admits a canonical representation as a join of join-irreducible elements.  Such a \defn{canonical join representation} can be described neatly by an edge-labeling determined by a perspectivity relation, see Section~\ref{sec:join_semidistributive_lattices}.  The set of canonical join representations is closed under passing to subsets, and therefore forms a simplicial complex; the \defn{canonical join complex}~\cite{reading15noncrossing}*{Proposition~2.2}.  See \cite{barnard19canonical} for a general study of canonical join complexes.  Our second main result establishes that the canonical join complex of $\Hoch(n)$ is vertex decomposable, which implies that this complex is shellable and Cohen-Macaulay.

\begin{theorem}\label{thm:hochschild_vertex_decomposable}
	For $n>0$, the canonical join complex of $\Hoch(n)$ is vertex decomposable.
\end{theorem}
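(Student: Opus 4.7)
The plan is to use Theorem~\ref{thm:hochschild_galois_graph} to convert $\CJC(\Hoch(n))$ into an explicit combinatorial object, and then to dismantle it inductively. Specifically, I would invoke the general fact---established in the paper's earlier treatment of canonical join complexes of congruence-uniform extremal lattices---that the faces of $\CJC(\Hoch(n))$ are exactly the independent sets (\ie subsets spanning no directed edge in either direction) of the Galois graph. Combined with Theorem~\ref{thm:hochschild_galois_graph}, this yields an explicit model: any two distinct vertices $(1,a), (1,b)$ are adjacent, the vertices $(2,2), \ldots, (2,n)$ are pairwise non-adjacent, and $(2,a)$ is adjacent to $(1,b)$ precisely when $a=b$. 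Consequently a face of $\CJC(\Hoch(n))$ is determined by a subset $S \subseteq \{2, \ldots, n\}$ together with at most one extra vertex $(1, r)$ with $r \in \{1,\ldots,n\} \setminus S$.

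With this description in hand, I would proceed by induction on $n$, the case $n=1$ being immediate since $\CJC(\Hoch(1))$ is a single vertex. For $n \geq 2$, I take $v := (1, n)$ as shedding vertex. Since $(1, n)$ is adjacent to every other $(1, t)$ and to $(2, n)$, the link $\link(v)$ is the full simplex on $\{(2, 2), \ldots, (2, n-1)\}$, and hence vertex decomposable. In $\del(v)$ the vertex $(2, n)$ has lost its unique neighbor, so $\del(v)$ is the cone with apex $(2, n)$ over the induced subcomplex on $\{(1, 1), \ldots, (1, n-1), (2, 2), \ldots, (2, n-1)\}$; by Theorem~\ref{thm:hochschild_galois_graph} applied to $n-1$, this subcomplex coincides with $\CJC(\Hoch(n-1))$, which is vertex decomposable by induction, and cones over vertex-decomposable complexes are vertex decomposable (use a shedding vertex of the base). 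The shedding condition holds because every facet of $\del(v)$ contains the cone apex $(2, n)$, while $(2, n)$ belongs to no face of $\link(v)$; thus no facet of $\link(v)$ is a facet of $\del(v)$.

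The main obstacle is the opening translation---pinning down that $\CJC(\Hoch(n))$ really is the independence complex of its Galois graph---since this relies on background material about congruence-uniform extremal lattices that must be in place before the theorem is addressed. Once that bridge is built, the induction runs essentially mechanically; the key structural observation is that removing $(1, n)$ turns $(2, n)$ into a cone point of the deletion.
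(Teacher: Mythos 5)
Your explicit model of $\CJC\bigl(\Hoch(n)\bigr)$ is factually correct, but the bridge you use to obtain it is a genuine gap. There is no ``general fact,'' in this paper or elsewhere, asserting that the canonical join complex of a congruence-uniform extremal lattice is the independence complex of its Galois graph; the paper never relates $\CJC$ to $\Galois$ at all. Indeed, the canonical join complex of a congruence-uniform lattice need not even be flag (when it is flag is one of the questions studied in \cite{barnard19canonical}), so in general it cannot be the independence complex of any graph. What actually delivers your model is Proposition~\ref{prop:hochschild_canonical_joinrep}: the canonical join representations in $\Hoch(n)$ are precisely the sets $\bigl\{\bfr^{(j)}\mid j\in T\bigr\}\cup\bigl\{\afr^{(i)}\bigr\}$ with $T\subseteq\{2,\ldots,n\}$ and $i\notin T$ (the $\afr^{(i)}$ optional), and these happen to coincide with the independent sets of the graph in Theorem~\ref{thm:hochschild_galois_graph}. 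You must replace the appeal to a general principle by this direct computation; the same remark applies to your identification of the induced subcomplex on $\bigl\{\afr^{(1)},\ldots,\afr^{(n-1)},\bfr^{(2)},\ldots,\bfr^{(n-1)}\bigr\}$ with $\CJC\bigl(\Hoch(n-1)\bigr)$, which should likewise be read off from the canonical join representations rather than from the Galois graph.

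Granting the facial description, your decomposition is correct but genuinely different from the paper's. The paper sheds $\afr^{(2)},\ldots,\afr^{(n)}$ successively, checking \eqref{it:vd1}--\eqref{it:vd3} against the explicit facet list $F_{1},\ldots,F_{n}$, and terminates in the simplex on $\bigl\{\afr^{(1)},\bfr^{(2)},\ldots,\bfr^{(n)}\bigr\}$ with no induction on $n$. You shed the single vertex $\afr^{(n)}=(1,n)$, observe that $\bfr^{(n)}=(2,n)$ becomes a cone point of the deletion, and recurse into $\CJC\bigl(\Hoch(n-1)\bigr)$; your verifications of the link being a simplex, of the cone structure, and of \eqref{it:vd3} are all correct, and the fact that cones over vertex-decomposable complexes are vertex decomposable is standard. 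Your route yields the slightly stronger structural statement that deleting one vertex turns $\CJC\bigl(\Hoch(n)\bigr)$ into a cone over $\CJC\bigl(\Hoch(n-1)\bigr)$, at the cost of the recursive identification noted above; the paper's route is non-inductive and stays entirely inside the explicit facet list.
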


The previously mentioned edge-labeling of a congruence-uniform lattice (the one that determines the canonical join representations) gives rise to an alternate partial order on the elements of this lattice.  Informally, with each lattice element we associate the set of edge-labels appearing in a particular interval, and order these sets by inclusion.  See Section~\ref{sec:core_label_order} for the precise definitions.  The resulting order---the \defn{core label order}---was first considered in the context of posets of regions of hyperplane arrangements~\cite{reading11noncrossing} and was later studied in a lattice-theoretic setting in \cites{muehle19the,muehle21distributive}.  We prove that the core label order of $\Hoch(n)$ is a lattice.  In fact, we prove much more than that: we show that the core label order of $\Hoch(n)$ is isomorphic to the shuffle lattice $\ShufflePoset(n-1,1)$ studied by C.~Greene in \cite{greene88posets}.  See Section~\ref{sec:triwords_shuffles} for the exact definitions.

\begin{theorem}\label{thm:hochschild_clo_shuffleposet}
	For $n>0$, the core label order of $\Hoch(n)$ is isomorphic to the shuffle lattice $\ShufflePoset(n-1,1)$.  
\end{theorem}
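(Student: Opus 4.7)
The plan is to construct an explicit bijection $\Phi\colon\CLO(\Hoch(n))\to\ShufflePoset(n-1,1)$ and verify it is order-preserving. First I would analyse the independent sets of the Galois graph described in Theorem~\ref{thm:hochschild_galois_graph}. The type-$1$ vertices $(1,1),\ldots,(1,n)$ form a tournament, so any independent set contains at most one such vertex; the type-$2$ vertices $(2,2),\ldots,(2,n)$ are mutually non-adjacent; and the only edges crossing between the types are $(2,t)\to(1,t)$. Consequently every independent set is encoded by a pair $(A,v)$, where $A\subseteq\{2,\ldots,n\}$ and $v$ is either the placeholder $\bot$ or a vertex $(1,t_{0})$ with $t_{0}\in\{1,\ldots,n\}\setminus A$. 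By the general theory of extremal congruence-uniform lattices developed in Section~\ref{sec:core_label_order}, these pairs also parametrize the elements of $\CLO(\Hoch(n))$ via their canonical join representations.

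Next I would use the description of the shuffle lattice from Section~\ref{sec:triwords_shuffles} to represent each element of $\ShufflePoset(n-1,1)$ as a pair $(S,p)$, where $S\subseteq\{1,\ldots,n-1\}$ lists the retained unprimed letters and $p$ is either $\bot$ (if the primed letter $b$ is absent) or an insertion position of $b$ among the letters indexed by $S$. I would then define the bijection by
\[
\Phi(A,v) = (S,p), \qquad S = \{1,\ldots,n-1\}\setminus(A-1),
\]
where $A-1 = \{t-1 : t\in A\}$, with $p = \bot$ when $v = \bot$ and otherwise $p$ chosen so that $b$ occupies the position indexed by $t_{0}$ in the natural enumeration of $\{1\}\cup\{j+1 : j\in S\}$. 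A direct count shows that if $|A| = k$, then $(A,v)$ admits $n-k$ extensions, matching the $|S|+2 = n-k$ insertion states of $(S,p)$; hence $\Phi$ is a bijection.

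The main obstacle is verifying that $\Phi$ intertwines the two partial orders. The core label order on $\Hoch(n)$ is given by inclusion of core label sets subject to a Galois-graph closure condition, while the shuffle order of Greene is driven by subword deletion and by moving the primed letter past neighbouring unprimed letters. I would match these structures at the level of covering relations: deleting an unprimed letter $a_{i}$ corresponds to adjoining the type-$2$ vertex $(2,i+1)$ to $A$ (with an induced adjustment whenever the current $(1,t_{0})$ has $t_{0}=i+1$); deleting or shifting $b$ corresponds to removing or sliding the type-$1$ vertex $v$. A careful case analysis of these covers, together with the facts that both posets are ranked of the same size and that $\Phi$ is a bijection, then yields the desired isomorphism and in particular proves that $\CLO(\Hoch(n))$ is a lattice.
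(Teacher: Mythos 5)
After unwinding your identification of elements with pairs $(A,v)$, your map $\Phi$ is the same bijection $\sigma$ from \eqref{eq:triwords_shuffles} that the paper uses (the set $S$ records the positions of the triword not containing a $2$, and $v$ records the position of the last $1$), and your overall strategy---an explicit bijection followed by a two-directional matching of order relations---is also the paper's. The genuine gap lies in the step you describe as ``inclusion of core label sets subject to a Galois-graph closure condition.'' You parametrize elements by their canonical join representations $\Can(\ufr)$, i.e.\ by independent sets of the Galois graph, but the core label order is the inclusion order on the core label sets $\Psi(\ufr)$, which are in general strictly larger than $\Can(\ufr)$, and the discrepancy is exactly what makes the order work. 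Concretely, in $\Hoch(3)$ one has $\Can\bigl((1,1,0)\bigr)=\bigl\{\afr^{(2)}\bigr\}$ and $\Can\bigl((1,2,0)\bigr)=\bigl\{\afr^{(1)},\bfr^{(2)}\bigr\}$, which are incomparable under inclusion, yet $(1,1,0)\sqsubseteq(1,2,0)$ holds because $\Psi\bigl((1,2,0)\bigr)=\bigl\{\afr^{(1)},\afr^{(2)},\bfr^{(2)}\bigr\}$ contains the extra label $\afr^{(2)}$. There is no general principle invoked (or available) in the paper that lets you read $\Psi(\ufr)$ off from $\Can(\ufr)$ and the Galois graph alone; the ``closure condition'' you appeal to is precisely the content of Proposition~\ref{prop:hochschild_core_labels}, which must be proved by computing the nucleus $\ufr_{\downarrow}$, exhibiting a saturated chain in the core carrying the labels $\afr^{(i)}$ for $l_{1}(\ufr)\leq i<f_{0}(\ufr)$, and excluding all other labels via Lemma~\ref{lem:core_labels}.

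Until that computation is supplied, your cover-matching case analysis cannot be carried out: deleting an unprimed letter of the shuffle word changes $\Can(\ufr)$ in a way that is not an inclusion (as the example above shows), so you cannot certify $\Psi(\ufr)\subseteq\Psi(\ufr')$ without knowing the full core label sets. The rest of the plan is sound---a bijection carrying relations to relations and cover relations back to relations between finite posets is indeed an isomorphism, and the lattice property of $\CLO\bigl(\Hoch(n)\bigr)$ does follow from the isomorphism with $\ShufflePoset(n-1,1)$. Two smaller points: the count of insertion states should read $\lvert S\rvert+2=n-k+1$, matching the $n-k+1$ choices of $v$ (including $\bot$), not $n-k$; and the claim that independent sets of the Galois graph coincide with canonical join representations is not proved in the paper either---it derives $\Can(\ufr)$ directly in Proposition~\ref{prop:hochschild_canonical_joinrep}---so you would need to justify that identification or work with Proposition~\ref{prop:hochschild_canonical_joinrep} instead.
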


We end this article with an enumerative observation.  Building on \cite{greene88posets}, we compute the \defn{$M$-triangle} of the core label order of $\Hoch(n)$, a refined variant of the (dual) characteristic polynomial of this lattice.  This polynomial behaves nicely under certain variable substitutions.  More precisely, certain invertible transformations of the $M$-triangle yield two other polynomials---the \defn{$F$-} and the \defn{$H$-triangle}---with nonnegative integer coefficients.  We provide combinatorial realizations of these polynomials in terms of refined enumerations of canonical join representations in $\Hoch(n)$.  Moreover, we provide a combinatorial explanation of the $F$-triangle as a certain face-generating function of the freehedron, and we interpret the $H$-triangle as a generating function of antichains in a particular order extension of the poset of irreducibles of the Hochschild lattice.  See Section~\ref{sec:chapoton_triangles} for the details. 

We wish to emphasize that the results of this paper nicely parallel known phenomena occurring around the Tamari lattice $\Tamari(n)$.  This is a certain lattice defined by a rotation operation on the set of full binary trees with $n$ internal nodes~\cite{tamari51monoides}.  The poset diagram of $\Tamari(n)$ is isomorphic to a particular orientation of the $1$-skeleton of the $n$-dimensional associahedron~\cite{stasheff63homotopy}, a polytope that arises by a certain truncation process from the $n$-dimensional hypercube, too.  The lattice $\Tamari(n)$ is congruence-uniform and extremal~\cites{geyer94on,markowsky92primes}, and its core label order is isomorphic to the lattice of noncrossing set partitions of an $n$-element set~\cite{reading11noncrossing}; see also \cite{kreweras72sur}.  The Galois graph of $\Tamari(n)$ was computed in \cite{markowsky92primes}, and it was shown in \cite{barnard20the} that the canonical join complex of $\Tamari(n)$ is vertex decomposable.  The $M$-triangle of the noncrossing partition lattice was computed in \cite{athanasiadis07on} following a conjectural description in \cites{chapoton04enumerative,chapoton06sur}, where the corresponding $F$- and $H$-triangles were defined, too.  Since then, the $F$-triangle has been realized as a refined face count of the (dual) associahedron and the $H$-triangle has been explained combinatorially in terms of antichains in a certain order extension of the poset of irreducibles of $\Tamari(n)$.  See also Section~\ref{sec:h_triangle_irreducibles} and Figure~\ref{fig:polytopes_lattices}.

\medskip

This article is organized as follows.  In Section~\ref{sec:preliminaries}, we recall the necessary order- and lattice-theoretic notions and formally define the Hochschild lattice $\Hoch(n)$.  The Galois graph of $\Hoch(n)$ is defined and computed in Section~\ref{sec:hochschild_galois_graph}.  In Section~\ref{sec:hochschild_join_complex}, we define the canonical join complex, we compute the canonical join representations in $\Hoch(n)$ and prove that the canonical join complex of $\Hoch(n)$ is vertex decomposable.  The core label order is defined in Section~\ref{sec:hochschild_core_label_order}, in which we also prove the connection between the core label order of $\Hoch(n)$ and a particular shuffle lattice.  In Section~\ref{sec:chapoton_triangles}, we compute and explain the $F$-, $H$- and $M$-triangles associated with $\Hoch(n)$.  We end this article with a list of open questions in Section~\ref{sec:open_questions}.

\section{Preliminaries}
	\label{sec:preliminaries}
\subsection{Posets}
	\label{sec:posets}
Let $\Poset=(P,\leq)$ be a partially ordered set (\defn{poset} for short).  In this article we consider only posets $\Poset$ whose \defn{ground set} $P$ is finite.

An element $a\in P$ is \defn{minimal} (resp. \defn{maximal}) if for every $b\in P$ with $b\leq a$ (resp. $a\leq b$) it follows that $b=a$.  A poset is \defn{bounded} if it has a unique minimal and a unique maximal element; usually denoted by $\least$ and $\grtst$, respectively.

For $a,b\in P$ with $a\leq b$, the set $[a,b]\defs\{c\in P\mid a\leq c\leq b\}$ is an \defn{interval} of $\Poset$.  If the interval $[a,b]$ has cardinality two, then the pair $(a,b)$ is a \defn{cover relation} of $\Poset$.  We usually write $a\lessdot b$ for a cover relation $(a,b)$, and we denote the set of cover relations of $\Poset$ by $\Covers(\Poset)$.  An \defn{edge-labeling} of $\Poset$ is a map $\lambda\colon\Covers(\Poset)\to M$ for some set $M$.

A \defn{$k$-multichain} of $\Poset$ is a tuple $(a_{1},a_{2},\ldots,a_{k})$ with $a_{1}\leq a_{2}\leq\cdots\leq a_{k}$.  If all entries are distinct, then this tuple is a \defn{chain}.  A chain is \defn{saturated} if $a_{1}\lessdot a_{2}\lessdot\cdots\lessdot a_{k}$ and it is \defn{maximal} if it is saturated and contains a minimal and a maximal element.  A subset $A\subseteq P$ is an antichain if any two distinct members of $A$ are incomparable.

The \defn{length} of $\Poset$ is one less than the maximum cardinality of a maximal chain and is denoted by $\len(\Poset)$.  If all maximal chains have the same cardinality, then $\Poset$ is \defn{graded}.  Graded posets admit a rank function $\rk\colon P\to\mathbb{N}$ which assigns to $a\in P$ the length of the interval $[m,a]$ (regarded as a subposet of $\Poset)$ for some minimal element $m\leq a$.

The \defn{M{\"o}bius function} of $\Poset$ is recursively defined by
\begin{displaymath}
	\mu_{\Poset}(a,b) \defs \begin{cases}1, & \text{if}\;a=b,\\ -\sum\limits_{c\in P\colon a<c\leq b}\mu_{\Poset}(c,b), & \text{if}\;a<b,\\ 0, & \text{otherwise}.\end{cases}
\end{displaymath}
If $\Poset$ is bounded, then $\mu(\Poset)\defs\mu_{\Poset}(\least,\grtst)$ is the \defn{M{\"o}bius invariant} of $\Poset$.  Let $\ZetaPol_{\Poset}(q)$ denote the number of $(q-1)$-multichains of $\Poset$.  We may regard $\ZetaPol_{\Poset}$ as a polynomial, the \defn{zeta polynomial} of $\Poset$, and a classical result by G.-C.~Rota states that $\mu(\Poset)=\ZetaPol_{\Poset}(-1)$ whenever $\Poset$ is bounded~\cite{rota64foundations}.

If $\Poset_{1}=(P_{1},\leq_{1})$ and $\Poset_{2}=(P_{2},\leq_{2})$ are two posets, then their \defn{direct product} is the poset $\Poset=(P_{1}\times P_{2},\comp)$, where $(a_{1},a_{2})\comp (b_{1},b_{2})$ if and only if $a_{1}\leq_{1}b_{1}$ and $a_{2}\leq_{2}b_{2}$.  The \defn{order ideal} generated by $B\subseteq P$ is the set 
\begin{displaymath}
	\Poset_{\leq B}\defs\{a\in P\mid a\leq b\;\text{for some}\;b\in B\}.
\end{displaymath}

\subsection{Lattices}
	\label{sec:lattices}
Let $\Poset=(P,\leq)$ be a bounded poset.  The \defn{join} of $a,b\in P$ is---if it exists---the unique minimal element $a\vee b$ of the set of \defn{upper bounds} of $a$ and $b$: $\{c\in P\mid a\leq c\;\text{and}\;b\leq c\}$.  Dually, we define the \defn{meet} of $a,b\in P$ to be the unique maximal element $a\wedge b$ of the set of lower bounds of $a$ and $b$.  

If every $a,b\in P$ has a join and a meet, then $\Poset$ is a \defn{lattice}.  An \defn{atom} is an element $a\in P$ such that $(\least,a)\in\Covers(\Poset)$.  Moreover, $j\in\Poset\setminus\{\least\}$ is \defn{join irreducible} if for every $a,b\in P$ with $j=a\vee b$ it follows that $j\in\{a,b\}$.  We denote the set of join-irreducible elements of $\Poset$ by $\JI(\Poset)$.  Since $\Poset$ is by assumption finite, the join-irreducible elements of $\Poset$ are precisely those elements $j\in P$ for which there exists a unique element $a\in P$ such that $(a,j)\in\Covers(\Poset)$.  Usually, we write $j_{*}$ instead of $a$.  Dually, we may define \defn{meet-irreducible} elements, and denote the set of these elements by $\MI(\Poset)$.  A lattice is \defn{extremal} if $\bigl\lvert\JI(\Poset)\bigr\rvert=\len(\Poset)=\bigl\lvert\MI(\Poset)\bigr\rvert$; see \cite{markowsky92primes}.

We denote disjoint set union by $\uplus$.  If $\Poset=(P,\leq)$ is a lattice and $B\subseteq P$, then we consider the set
\begin{displaymath}
	P[B] \defs \Bigl(\Poset_{\leq B}\times\{0\}\Bigr)\uplus\Bigl(\bigl((P\setminus \Poset_{\leq B})\cup B\bigr)\times\{1\}\Bigr).
\end{displaymath}
The \defn{doubling} of $\Poset$ by $B$ is the poset $\Poset[B]\defs\bigl(P[B],\comp\bigr)$; see \cite{day79characterizations}.  If $B$ is an interval, then $\Poset[B]$ is a lattice~\cite{day79characterizations}, and a lattice $\Poset$ is \defn{congruence uniform} if it can be obtained from the singleton lattice by a sequence of interval doublings.  See Figure~\ref{fig:hochschild_doubling} for an illustration.

\begin{figure}
	\centering
	\begin{tikzpicture}\small
		\def\x{1.85};
		\def\y{1};
		\def\s{.5};
		\draw(1*\x,1*\y) node{\begin{tikzpicture}\small
			\draw(2,1) node[draw,fill=gray,circle,scale=\s](a1){};
			\draw(1,2) node[draw,fill=gray,circle,scale=\s](a2){};
			\draw(1,5) node{};
			\draw(a1) -- (a2);
		\end{tikzpicture}};
		\draw(1*\x,-1.5*\y) node{$\Hoch(1)$};
		\draw(1.5*\x,-.5*\y) node{$\to$};
		\draw(2.5*\x,-.5*\y) node{$\to$};
		\draw(3.5*\x,-.5*\y) node{$\to$};
		\draw(5.5*\x,-.5*\y) node{$\to$};
		\draw(2*\x,1*\y) node{\begin{tikzpicture}\small
			\draw(2,1) node[draw,circle,scale=\s](a1){};
			\draw(1,2) node[draw,fill=gray,circle,scale=\s](a2){};
			\draw(2,3) node[draw,circle,scale=\s](a3){};
			\draw(1,4) node[draw,circle,scale=\s](a4){};
			\draw(1,5) node{};
			\draw(a1) -- (a2);
			\draw(a1) -- (a3);
			\draw(a2) -- (a4);
			\draw(a3) -- (a4);
		\end{tikzpicture}};
		\draw(3*\x,1*\y) node{\begin{tikzpicture}\small
			\draw(2,1) node[draw,fill=gray,circle,scale=\s](a1){};
			\draw(1,2) node[draw,fill=gray,circle,scale=\s](a2){};
			\draw(1,3) node[draw,fill=gray,circle,scale=\s](a3){};
			\draw(2,3) node[draw,fill=gray,circle,scale=\s](a4){};
			\draw(1,4) node[draw,fill=gray,circle,scale=\s](a5){};
			\draw(1,5) node{};
			\draw(a1) -- (a2);
			\draw(a1) -- (a4);
			\draw(a2) -- (a3);
			\draw(a3) -- (a5);
			\draw(a4) -- (a5);
		\end{tikzpicture}};
		\draw(3*\x,-1.5*\y) node{$\Hoch(2)$};
		\draw(4.5*\x,1*\y) node{\begin{tikzpicture}\small
			\draw(2,1) node[draw,circle,scale=\s](a1){};
			\draw(1,2) node[draw,circle,scale=\s](a2){};
			\draw(4,2) node[draw,circle,scale=\s](a3){};
			\draw(1,3) node[draw,fill=gray,circle,scale=\s](a4){};
			\draw(2,3) node[draw,circle,scale=\s](a5){};
			\draw(3,3) node[draw,circle,scale=\s](a6){};
			\draw(1,4) node[draw,fill=gray,circle,scale=\s](a7){};
			\draw(3,4) node[draw,circle,scale=\s](a8){};
			\draw(4,4) node[draw,circle,scale=\s](a9){};
			\draw(3,5) node[draw,circle,scale=\s](a10){};
			\draw(a1) -- (a2);
			\draw(a1) -- (a3);
			\draw(a1) -- (a5);
			\draw(a2) -- (a4);
			\draw(a2) -- (a6);
			\draw(a3) -- (a6);
			\draw(a3) -- (a9);
			\draw(a4) -- (a7);
			\draw(a4) -- (a8);
			\draw(a5) -- (a7);
			\draw(a5) -- (a9);
			\draw(a6) -- (a8);
			\draw(a7) -- (a10);
			\draw(a8) -- (a10);
			\draw(a9) -- (a10);
		\end{tikzpicture}};
		\draw(6.5*\x,1*\y) node{\begin{tikzpicture}\small
			\draw(2,1) node[draw,circle,scale=\s](a1){};
			\draw(1,2) node[draw,circle,scale=\s](a2){};
			\draw(4,2) node[draw,circle,scale=\s](a3){};
			\draw(1,3) node[draw,circle,scale=\s](a4){};
			\draw(2,3) node[draw,circle,scale=\s](a5){};
			\draw(3,3) node[draw,circle,scale=\s](a6){};
			\draw(2,3.5) node[draw,circle,scale=\s](a7){};
			\draw(1,4) node[draw,circle,scale=\s](a8){};
			\draw(3,4) node[draw,circle,scale=\s](a9){};
			\draw(4,4) node[draw,circle,scale=\s](a10){};
			\draw(2,4.5) node[draw,circle,scale=\s](a11){};
			\draw(3,5) node[draw,circle,scale=\s](a12){};
			\draw(a1) -- (a2);
			\draw(a1) -- (a3);
			\draw(a1) -- (a5);
			\draw(a2) -- (a4);
			\draw(a2) -- (a6);
			\draw(a3) -- (a6);
			\draw(a3) -- (a10);
			\draw(a4) -- (a8);
			\draw(a4) -- (a7);
			\draw(a5) -- (a8);
			\draw(a5) -- (a10);
			\draw(a6) -- (a9);
			\draw(a7) -- (a9);
			\draw(a7) -- (a11);
			\draw(a8) -- (a11);
			\draw(a9) -- (a12);
			\draw(a10) -- (a12);
			\draw(a11) -- (a12);
		\end{tikzpicture}};
		\draw(6*\x,-1.5*\y) node{$\Hoch(3)$};
	\end{tikzpicture}
	\caption{Illustration of the doubling construction.  At each step, we double by the interval indicated by the highlighted elements.}
	\label{fig:hochschild_doubling}
\end{figure}
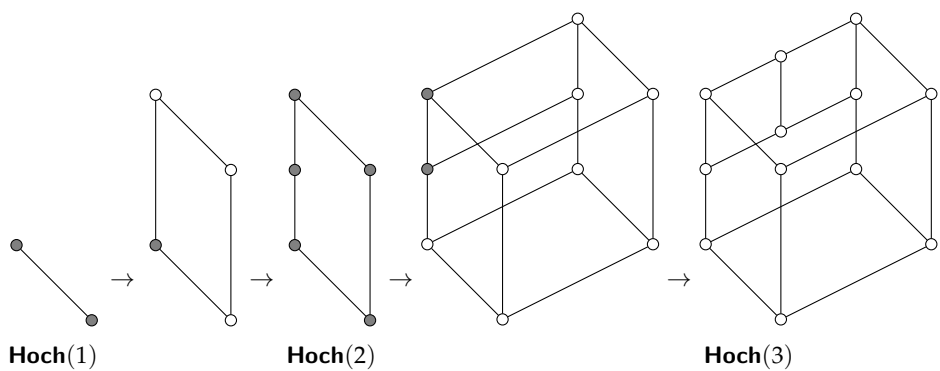

\subsection{Triwords and Hochschild lattices}
	\label{sec:triwords}
For $n>0$ we define $[n]\defs\{1,2,\ldots,n\}$.  A tuple $\ufr=(u_{1},u_{2},\ldots,u_{n})$ is a \defn{triword} of length $n$ if for all $i\in[n]$:
\begin{description}
	\item[T1\label{it:tri1}] $u_{i}\in\{0,1,2\}$;
	\item[T2\label{it:tri2}] $u_{1}\neq 2$;
	\item[T3\label{it:tri3}] if $u_{i}=0$, then $u_{j}\neq 1$ for all $j>i$.
\end{description}
Let $\Tri(n)$ denote the set of all triwords of length $n$.  A pair $(i,j)$ with $1\leq i<j\leq n$ is a \defn{$01$-pattern} in $\ufr$ if $u_{i}=0$ and $u_{j}=1$.  If $\ufr$ does not have a $01$-pattern, then it is \defn{$01$-avoiding}.  Using this notation, a triword is an element of $\{0,1,2\}^{n}$ which is $01$-avoiding and whose first letter is not a $2$.

Throughout this article, we denote elements of $\Tri(n)$ in a $\mathfrak{fraktur}$ font, and denote the $i\th$ component of such an element in a regular font with subscript $i$.  More precisely, if $\ufr\in\Tri(n)$, then $\ufr=(u_{1},u_{2},\ldots,u_{n})$.

\begin{proposition}[\cite{chapoton20some}*{Proposition~8.19}]\label{prop:triwords_size}
	For $n>0$, $\bigl\lvert\Tri(n)\bigr\rvert=2^{n-2}(n+3)$.
\end{proposition}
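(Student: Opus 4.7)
The plan is to split $\Tri(n)$ according to the value of $u_1$, which by \ref{it:tri2} lies in $\{0,1\}$. I will count each case separately and add the results.

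If $u_1=0$, then condition \ref{it:tri3} forces $u_j\in\{0,2\}$ for every $j>1$, and there are no further constraints. This contributes exactly $2^{n-1}$ triwords.

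If $u_1=1$, then $(u_2,u_3,\ldots,u_n)$ can be any $01$-avoiding word in $\{0,1,2\}^{n-1}$. So the remaining task is to count the set $A_k$ of $01$-avoiding words of length $k$ in $\{0,1,2\}^k$. I would condition on the position of the first occurrence of the letter $0$. If no $0$ appears, the word lies in $\{1,2\}^k$, giving $2^k$ options. If the first $0$ sits in position $i\in[k]$, then the $i-1$ preceding letters lie in $\{1,2\}$ (else $0$ would not be first) and the $k-i$ following letters lie in $\{0,2\}$ (by the $01$-avoidance), yielding $2^{i-1}\cdot 2^{k-i}=2^{k-1}$ words. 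Summing over $i$ gives $|A_k|=2^k+k\cdot 2^{k-1}=2^{k-1}(k+2)$. Setting $k=n-1$ contributes $2^{n-2}(n+1)$ triwords.

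Combining the two cases, one gets
\begin{displaymath}
	\bigl\lvert\Tri(n)\bigr\rvert = 2^{n-1}+2^{n-2}(n+1) = 2^{n-2}\bigl(2+(n+1)\bigr)=2^{n-2}(n+3),
\end{displaymath}
as claimed. There is no real obstacle here; the only point deserving a bit of care is the clean split at the first $0$, since after that position letters are constrained to $\{0,2\}$, which is precisely what makes the count factor into two independent binary choices and keeps the double-counting at bay. As sanity checks, the formula gives $2,5,12$ for $n=1,2,3$, which match the small cases $\Hoch(1),\Hoch(2),\Hoch(3)$ shown in Figure~\ref{fig:hochschild_doubling}.
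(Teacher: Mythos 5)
The paper does not prove this proposition; it is quoted directly from Chapoton's work (\cite{chapoton20some}*{Proposition~8.19}), so there is no internal proof to compare your argument against. Judged on its own merits, your proof is correct and complete: the split on $u_{1}\in\{0,1\}$ is legitimate by \ref{it:tri2}, the observation that $u_{1}=0$ forces all later letters into $\{0,2\}$ follows from \ref{it:tri3}, and the count of $01$-avoiding words by conditioning on the position of the first $0$ is airtight --- before that position the letters lie in $\{1,2\}$ and after it in $\{0,2\}$, and conversely any such word is $01$-avoiding, so the decomposition is a genuine bijection with no double counting. The arithmetic $2^{n-1}+2^{n-2}(n+1)=2^{n-2}(n+3)$ checks out, as do the small cases $n=1,2,3$. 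This is a clean, self-contained, elementary verification of a fact the paper imports as a black box.
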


The numbers appearing in Proposition~\ref{prop:triwords_size} form \cite{sloane}*{A045623}.  Let $\comp$ denote the componentwise order on tuples of integers.  The partially ordered set $\Hoch(n)\defs\bigl(\Tri(n),\comp\bigr)$ is the \defn{Hochschild lattice}.  By \cite{combe20geometric}*{Section~1.2}, the poset $\Hoch(n)$ is indeed a lattice, where the join in $\Hoch(n)$ is obtained by taking componentwise maxima and the meet is obtained by taking componentwise minima and exchanging the $1$ in each resulting $01$-pattern by a $0$.  Moreover, if $(\ufr,\vfr)\in\Covers\bigl(\Hoch(n)\bigr)$, then $\ufr$ and $\vfr$ differ in exactly one component and the sum over the entries of $\vfr$ is bigger than the sum over the entries of $\ufr$.  Figure~\ref{fig:hochschild_3} shows $\Hoch(3)$.

\begin{figure}
	\centering
	\begin{tikzpicture}\small
		\def\x{2};
		\def\y{1.5};
		\def\s{.9};
		\draw(3*\x,1*\y) node[scale=\s](n1){$(0,0,0)$};
		\draw(2*\x,2*\y) node[scale=\s](n2){$(1,0,0)$};
		\draw(2*\x,3*\y) node[scale=\s](n3){$(1,1,0)$};
		\draw(3*\x,3*\y) node[scale=\s](n4){$(0,2,0)$};
		\draw(7*\x,3*\y) node[scale=\s](n5){$(0,0,2)$};
		\draw(2*\x,4*\y) node[scale=\s](n6){$(1,2,0)$};
		\draw(4*\x,4*\y) node[scale=\s](n7){$(1,1,1)$};
		\draw(6*\x,4*\y) node[scale=\s](n8){$(1,0,2)$};
		\draw(4*\x,5*\y) node[scale=\s](n9){$(1,2,1)$};
		\draw(6*\x,5*\y) node[scale=\s](n10){$(1,1,2)$};
		\draw(7*\x,5*\y) node[scale=\s](n11){$(0,2,2)$};
		\draw(6*\x,6*\y) node[scale=\s](n12){$(1,2,2)$};
		\draw(n1) -- (n2) node[fill=white,text=gray!50!red,inner sep=.5pt,scale=.75] at(2.5*\x,1.5*\y) {$\afr^{(1)}$};
		\draw(n1) -- (n4) node[fill=white,text=gray!50!red,inner sep=.5pt,scale=.75] at(3*\x,2*\y) {$\bfr^{(2)}$};
		\draw(n1) -- (n5) node[fill=white,text=gray!50!red,inner sep=.5pt,scale=.75] at(5*\x,2*\y) {$\bfr^{(3)}$};
		\draw(n2) -- (n3) node[fill=white,text=gray!50!red,inner sep=.5pt,scale=.75] at(2*\x,2.5*\y) {$\afr^{(2)}$};
		\draw(n2) -- (n8) node[fill=white,text=gray!50!red,inner sep=.5pt,scale=.75] at(4*\x,3*\y) {$\bfr^{(3)}$};
		\draw(n3) -- (n6) node[fill=white,text=gray!50!red,inner sep=.5pt,scale=.75] at(2*\x,3.5*\y) {$\bfr^{(2)}$};
		\draw(n3) -- (n7) node[fill=white,text=gray!50!red,inner sep=.5pt,scale=.75] at(3*\x,3.5*\y) {$\afr^{(3)}$};
		\draw(n4) -- (n6) node[fill=white,text=gray!50!red,inner sep=.5pt,scale=.75] at(2.5*\x,3.5*\y) {$\afr^{(1)}$};
		\draw(n4) -- (n11) node[fill=white,text=gray!50!red,inner sep=.5pt,scale=.75] at(5*\x,4*\y) {$\bfr^{(3)}$};
		\draw(n5) -- (n8) node[fill=white,text=gray!50!red,inner sep=.5pt,scale=.75] at(6.5*\x,3.5*\y) {$\afr^{(1)}$};
		\draw(n5) -- (n11) node[fill=white,text=gray!50!red,inner sep=.5pt,scale=.75] at(7*\x,4*\y) {$\bfr^{(2)}$};
		\draw(n6) -- (n9) node[fill=white,text=gray!50!red,inner sep=.5pt,scale=.75] at(3*\x,4.5*\y) {$\afr^{(3)}$};
		\draw(n7) -- (n9) node[fill=white,text=gray!50!red,inner sep=.5pt,scale=.75] at(4*\x,4.5*\y) {$\bfr^{(2)}$};
		\draw(n7) -- (n10) node[fill=white,text=gray!50!red,inner sep=.5pt,scale=.75] at(5*\x,4.5*\y) {$\bfr^{(3)}$};
		\draw(n8) -- (n10) node[fill=white,text=gray!50!red,inner sep=.5pt,scale=.75] at(6*\x,4.67*\y) {$\afr^{(2)}$};
		\draw(n9) -- (n12) node[fill=white,text=gray!50!red,inner sep=.5pt,scale=.75] at(5*\x,5.5*\y) {$\bfr^{(3)}$};
		\draw(n10) -- (n12) node[fill=white,text=gray!50!red,inner sep=.5pt,scale=.75] at(6*\x,5.5*\y) {$\bfr^{(2)}$};
		\draw(n11) -- (n12) node[fill=white,text=gray!50!red,inner sep=.5pt,scale=.75] at(6.5*\x,5.5*\y) {$\afr^{(1)}$};
	\end{tikzpicture}
	\caption{The lattice $\Hoch(3)$ labeled by \eqref{eq:jsd_labeling}.}
	\label{fig:hochschild_3}
\end{figure}
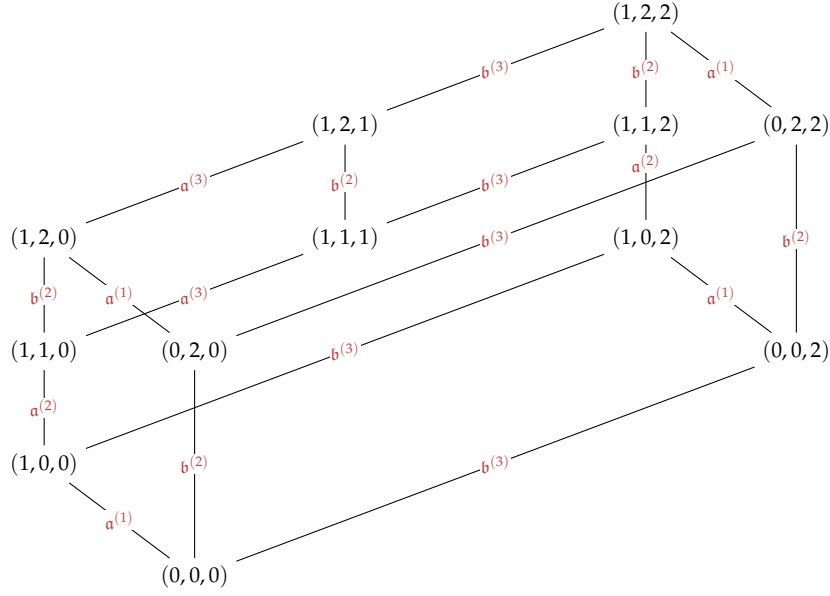

\begin{theorem}[\cite{combe20geometric}*{Theorem~2.3~and~Proposition~3.2}]\label{thm:hochschild_properties}
	For $n>0$, the lattice $\Hoch(n)$ is extremal and congruence uniform.
\end{theorem}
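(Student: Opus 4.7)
The plan is to treat the two claims separately. Extremality reduces to enumerating $\JI(\Hoch(n))$ and $\MI(\Hoch(n))$ and computing $\len(\Hoch(n))$, whereas congruence uniformity will follow from Day's characterization, by exhibiting an explicit sequence of interval doublings that builds $\Hoch(n)$ from the singleton lattice, in the spirit of Figure~\ref{fig:hochschild_doubling}.

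For extremality, recall that cover relations in $\Hoch(n)$ change a single coordinate and raise the coordinate sum; hence a triword $\jfr$ is join-irreducible if and only if exactly one of its coordinates admits a decrease within $\Tri(n)$. A short case analysis using conditions~\eqref{it:tri1}--\eqref{it:tri3} should identify the join-irreducibles as the $n$ triwords $\afr^{(k)}$ with $k$ leading ones followed by zeros (for $1\leq k\leq n$) together with the $n-1$ triwords $\bfr^{(k)}$ with a single $2$ in position $k$ (for $2\leq k\leq n$); these are precisely the labels attached to the cover relations in Figure~\ref{fig:hochschild_3}. A dual analysis yields $|\MI(\Hoch(n))|=2n-1$ as well. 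For the length, I would observe that the greatest element $(1,2,\ldots,2)$ has coordinate sum $2n-1$ and that each cover raises the sum by at least one, hence $\len(\Hoch(n))\leq 2n-1$; this bound is attained by the chain that first ascends through $\afr^{(1)}\lessdot\cdots\lessdot\afr^{(n)}$ and then flips coordinates $n,n-1,\ldots,2$ from $1$ to $2$. Extremality is then immediate.

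For congruence uniformity, I would proceed by induction on $n$. The base case $\Hoch(1)$ is the two-element chain, obtained from the singleton by one doubling. For the inductive step, I would embed $\Hoch(n-1)$ into $\Hoch(n)$ via $\ufr\mapsto(\ufr,0)$ and specify a sequence of doublings that inserts the remaining triwords, first those of the form $(\vfr,1)$ with $\vfr$ being $01$-avoiding of length $n-1$, and then those of the form $(\vfr,2)$. At each stage, the candidate set to double is the natural copy of an interval in the lattice constructed so far, ordered so that the newly produced cover relations match those described for $\Hoch(n)$ in Section~\ref{sec:triwords}. The main obstacle will be verifying that each candidate really is an interval after the preceding doublings, and that doubling by it agrees, via the identity map on triwords, with the expected next intermediate lattice. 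This is essentially a bookkeeping exercise on cover relations against the constraints~\eqref{it:tri2}--\eqref{it:tri3}, whose main subtlety is that successive doublings must not destroy the interval structure of the sets to be doubled at later stages.
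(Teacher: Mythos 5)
First, note that the paper does not prove this statement at all---it is imported verbatim from C.~Combe's work (\cite{combe20geometric}, Theorem~2.3 and Proposition~3.2), with the relevant doubling sequence only recalled informally at the end of Section~\ref{sec:triwords}. So you are attempting a self-contained proof where the paper offers none; that is legitimate, and your extremality half is essentially sound: covers change one coordinate and increase the coordinate sum, each decreasable coordinate of a triword yields exactly one covered element, so the join-irreducibles are exactly the $\afr^{(k)}$ and $\bfr^{(k)}$ (this matches the enumeration the paper itself carries out in Section~\ref{sec:hochschild_galois_graph}), and your chain through $\afr^{(1)},\ldots,\afr^{(n)}$ followed by the flips to $2$ attains the upper bound $2n-1$ on the length. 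The ``dual analysis'' for $\bigl\lvert\MI(\Hoch(n))\bigr\rvert$ is asserted rather than done, but it is a genuine (if slightly asymmetric) computation that does work out.

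The congruence-uniformity half, however, has a real gap: your doubling order is backwards and cannot be repaired by bookkeeping. You propose to first insert the triwords $(\vfr,1)$ and then the triwords $(\vfr,2)$; the construction recalled from \cite{combe20geometric} does the opposite, first doubling by the \emph{entire} lattice $\Hoch(n-1)$ (which creates all triwords ending in $2$) and only then doubling by the interval of triwords whose unique zero sits in the last position (which creates the triwords ending in $1$). The order matters. Already for $n=2$ your intermediate poset---the triwords ending in $0$ or $1$, namely $(0,0)\lessdot(1,0)\lessdot(1,1)$---is a three-element chain, and no sequence of interval doublings of a three-element chain produces the pentagon $\Hoch(2)$ with $(0,2)$ and $(1,2)$ as the newly created elements: doubling a singleton of a chain yields a longer chain, and doubling either two-element interval of the three-chain yields a five-element lattice that is not $N_{5}$. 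In general, once the $(\vfr,1)$'s are present, the set of elements lying directly below the would-be $(\vfr,2)$'s is not order-convex, hence not an interval. A secondary slip: for $(\vfr,1)$ to be a triword, condition~\eqref{it:tri3} forces $\vfr$ to contain \emph{no} letter $0$ at all (any $0$ followed by the final $1$ is a $01$-pattern), not merely to be $01$-avoiding, so your description of the first batch of new elements is also too large.
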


In fact, it was explained in \cite{combe20geometric}*{Section~2.3} that $\Hoch(n)$ can be obtained from ${\Hoch(n-1)}$ by two doublings.  One first injects $\Tri(n-1)$ into $\Tri(n)$ by appending $0$ to the end of each triword.  Then, one doubles by the full lattice, where in the doubled copy the last letter is switched from $0$ to $2$.  Finally, one doubles by the interval consisting of all triwords which have a unique zero and this zero is in the last position.  The reader is invited to label the nodes appearing in Figure~\ref{fig:hochschild_doubling} appropriately to verify this construction.

\section{The Galois graph of $\Hoch(n)$}
	\label{sec:hochschild_galois_graph}
Let $\Lattice$ be an extremal lattice, \ie 
\begin{displaymath}
	\bigl\lvert\JI(\Lattice)\bigr\rvert=\len(\Lattice)=\bigl\lvert\MI(\Lattice)\bigr\rvert.
\end{displaymath}
If $\len(\Lattice)=k$, then a maximal chain $C:\least=a_{0}\lessdot a_{1}\lessdot\cdots\lessdot a_{k}=\grtst$ of $\Lattice$ induces a linear order on both $\JI(\Lattice)$ and $\MI(\Lattice)$.  More precisely, we may label the join- and meet-irreducible elements by $j_{1},j_{2},\ldots,j_{k}$ and $m_{1},m_{2},\ldots,m_{k}$, respectively, such that for all $s\in[k]$:
\begin{equation}\label{eq:extremal_order}
	j_{1}\vee j_{2}\vee\cdots\vee j_{s}=a_{s}=m_{s+1}\wedge m_{s+2}\wedge\cdots\wedge m_{k}.
\end{equation}
The \defn{Galois graph} of $\Lattice$ is the directed graph $\Galois(\Lattice)$ with vertex set $[k]$ such that $s\to t$ if and only if $s\neq t$ and $j_{s}\not\leq m_{t}$.  If $\Lattice$ is extremal and congruence uniform, the description of $\Galois(\Lattice)$ is somewhat simpler.

\begin{lemma}[\cite{muehle18noncrossing}*{Corollary~2.15}]\label{lem:extremal_congruence_uniform_galois}
	Let $\Lattice$ be an extremal, congruence-uniform lattice of length $k$.  Let $\JI(\Lattice)$ and $\MI(\Lattice)$ be ordered as in \eqref{eq:extremal_order} with respect to some maximal chain of length $k$.  For $s,t\in[k]$, it holds that $j_{s}\not\leq m_{t}$ if and only if $s\neq t$ and $j_{t}\leq {j_{t}}_{*}\vee j_{s}$.
\end{lemma}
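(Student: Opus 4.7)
My plan is to reduce the lemma to the key identification $\kappa(j_{s})=m_{s}$ for every $s\in[k]$, where $\kappa\colon\JI(\Lattice)\to\MI(\Lattice)$ denotes the canonical bijection arising from semidistributivity: for each $j\in\JI(\Lattice)$, $\kappa(j)$ is the unique meet-irreducible characterized as the maximum element $x$ satisfying $j_{*}\leq x$ and $j\not\leq x$. Since $\Lattice$ is congruence uniform it is semidistributive, and extremality guarantees that $\kappa$ is a bijection between the $k$-element sets $\JI(\Lattice)$ and $\MI(\Lattice)$. Once $\kappa(j_{t})=m_{t}$ is established, the equivalence will follow cleanly from the defining property of $\kappa$.

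The first step is to prove $\kappa(j_{s})=m_{s}$. I would record three elementary facts about the indexing of \eqref{eq:extremal_order}: first, because every lattice element is the join of the join-irreducibles below it and $|\JI(\Lattice)|=k$, each cover $a_{s-1}\lessdot a_{s}$ has exactly one join-irreducible below $a_{s}$ but not below $a_{s-1}$, namely $j_{s}$; second, this uniqueness forces every join-irreducible $j'\leq(j_{s})_{*}$ to lie below $a_{s-1}$, whence $(j_{s})_{*}\leq a_{s-1}\leq m_{s}$; third, $a_{s-1}\leq m_{s}$ together with $a_{s}\not\leq m_{s}$ (which follows from $a_{s-1}=m_{s}\wedge a_{s}$ and $a_{s-1}<a_{s}$) and $a_{s}=a_{s-1}\vee j_{s}$ forces $j_{s}\not\leq m_{s}$. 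These give $m_{s}\leq\kappa(j_{s})$. Writing $\kappa(j_{s})=m_{\sigma(s)}$ for a permutation $\sigma$ of $[k]$, any $s$ with $\sigma(s)<s$ would give $m_{\sigma(s)}\geq m_{s}\geq a_{s-1}\geq a_{\sigma(s)}$, contradicting $a_{\sigma(s)}\not\leq m_{\sigma(s)}$; hence $\sigma(s)\geq s$ for all $s$, which forces $\sigma$ to be the identity. This index-monotonicity argument is the main obstacle of the proof.

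The final step deduces the equivalence from $\kappa(j_{t})=m_{t}$. For $s<t$ one has $j_{s}\leq a_{s}\leq a_{t-1}\leq m_{t}$ automatically, so the content of the lemma concerns $s>t$. For the direction $(\Leftarrow)$, if $j_{s}\leq m_{t}$ then $(j_{t})_{*}\vee j_{s}\leq m_{t}$ since $(j_{t})_{*}\leq m_{t}$, so the hypothesis $j_{t}\leq(j_{t})_{*}\vee j_{s}$ would give $j_{t}\leq m_{t}$, contradicting $\kappa(j_{t})=m_{t}$. For the direction $(\Rightarrow)$, if $j_{t}\not\leq(j_{t})_{*}\vee j_{s}$, then the element $(j_{t})_{*}\vee j_{s}$ satisfies the two defining conditions of $\kappa(j_{t})$ and therefore lies below $\kappa(j_{t})=m_{t}$ by maximality, yielding $j_{s}\leq m_{t}$.
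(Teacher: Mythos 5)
The paper does not actually prove this lemma; it is imported verbatim from \cite{muehle18noncrossing}*{Corollary~2.15}, so there is no internal proof to compare against. Your argument is a correct, self-contained derivation. The reduction to the identity $\kappa(j_{s})=m_{s}$, where $\kappa$ is the standard map of finite semidistributive lattice theory, is exactly the right mechanism, and the individual steps check out: the counting argument showing that each cover $a_{s-1}\lessdot a_{s}$ absorbs exactly one new join-irreducible, namely $j_{s}$; the consequences $(j_{s})_{*}\leq a_{s-1}\leq m_{s}$ and $j_{s}\not\leq m_{s}$; and the two directions of the final equivalence, which follow mechanically from the defining maximality of $\kappa(j_{t})$ once $\kappa(j_{t})=m_{t}$ is in hand.

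Two small remarks. First, your permutation argument relies on $\kappa$ being injective; this is a genuine (if standard) theorem about finite semidistributive lattices \cite{freese95free}, and it is worth being explicit that this is the input you are using. You could avoid it entirely by also ruling out $\sigma(s)>s$ directly: if $\kappa(j_{s})=m_{u}$ with $u>s$, then $j_{s}\leq a_{s}=m_{s+1}\wedge\cdots\wedge m_{k}\leq m_{u}=\kappa(j_{s})$, contradicting $j_{s}\not\leq\kappa(j_{s})$; together with your exclusion of $u<s$ this pins down $u=s$ using only the existence of $\kappa$. Second, what your proof actually establishes is the clean equivalence ``$j_{s}\not\leq m_{t}$ if and only if $j_{t}\leq{j_{t}}_{*}\vee j_{s}$'', valid for all $s,t$. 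The lemma as printed, with the clause $s\neq t$ on the right-hand side, is literally false at $s=t$, since there $j_{s}\not\leq m_{s}$ holds while the right-hand side fails. This is an imprecision in the quoted statement rather than in your argument, and it is harmless for the intended application to the Galois graph, where $s\neq t$ is part of the edge condition anyway.
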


Before we compute the Galois graph of $\Hoch(n)$, we briefly explain that the Galois graph of an extremal lattice $\Lattice$ uniquely determines $\Lattice$; see \cites{markowsky92primes,thomas19rowmotion}.

For $k>0$, let $G=\bigl([k],E\bigr)$ be a directed graph.  A pair $(A,B)$ for $A,B\subseteq [k]$ is \defn{orthogonal} if $A\cap B=\emptyset$ and there is no $s\in A$ and no $t\in B$ such that $(s,t)\in E$.  An orthogonal pair $(A,B)$ is \defn{maximal} if $A$ and $B$ are maximal with this property.  Let $\mathsf{MO}(G)$ denote the set of maximal orthogonal pairs of $G$.

For $(A_{1},B_{1}),(A_{2},B_{2})\in\mathsf{MO}(G)$ of $G$, we set $(A_{1},B_{1})\sqsubseteq(A_{2},B_{2})$ if and only if $A_{1}\subseteq A_{2}$ (or equivalently $B_{1}\supseteq B_{2}$).  The poset $\bigl(\mathsf{MO}(G),\sqsubseteq\bigr)$ is a lattice, in which the join is computed by intersecting second components and the meet is computed by intersecting first components.

\begin{theorem}[\cite{markowsky92primes}*{Theorem~11}]\label{thm:extremal_lattice_representation}
	Every finite extremal lattice is isomorphic to the lattice of maximal orthogonal pairs of its Galois graph.  Conversely, if $G=\bigl([k],E\bigr)$ is a directed graph such that $(s,t)\in E$ only if $s>t$, then the lattice of maximal orthogonal pairs of $G$ is extremal.
\end{theorem}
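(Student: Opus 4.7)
The plan is to prove the two directions separately.

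For the forward direction, fix an extremal lattice $\Lattice$ of length $k$ with maximal chain $\least=a_0\lessdot a_1\lessdot\cdots\lessdot a_k=\grtst$, and let $j_1,\ldots,j_k$ and $m_1,\ldots,m_k$ be the labelings of $\JI(\Lattice)$ and $\MI(\Lattice)$ induced by \eqref{eq:extremal_order}. I would define $\phi\colon \Lattice\to\mathsf{MO}\bigl(\Galois(\Lattice)\bigr)$ by $\phi(x)=(A(x),B(x))$ with $A(x)=\{s:j_s\leq x\}$ and $B(x)=\{t:x\leq m_t\}$, and argue $\phi$ is a lattice isomorphism. First I would observe that $j_s\not\leq m_s$ in any extremal lattice: if $j_s\leq m_s$, then $j_s\leq m_s\wedge m_{s+1}\wedge\cdots\wedge m_k=a_{s-1}$, forcing $a_s=a_{s-1}\vee j_s=a_{s-1}$, contradicting $a_{s-1}\lessdot a_s$. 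This yields $A(x)\cap B(x)=\emptyset$, while the absence of a Galois edge from $A(x)$ to $B(x)$ is immediate from $j_s\leq x\leq m_t$ for $s\in A(x)$, $t\in B(x)$.

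For maximality of $(A(x),B(x))$, suppose $(A',B')$ is orthogonal with $A(x)\subsetneq A'$ and $B(x)\subseteq B'$ (the dual case $B(x)\subsetneq B'$ is handled symmetrically). Pick $s\in A'\setminus A(x)$, so $j_s\not\leq x$. For each $t\in B(x)\subseteq B'$, orthogonality of $(A',B')$ forbids the edge $s\to t$; since $A'\cap B'=\emptyset$ forces $s\notin B(x)$ and thus $s\neq t$, we get $j_s\leq m_t$. Hence $j_s\leq\bigwedge_{t\in B(x)}m_t=x$, a contradiction. Injectivity of $\phi$ follows from $x=\bigvee_{s\in A(x)}j_s$. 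For surjectivity, given $(A,B)\in\mathsf{MO}\bigl(\Galois(\Lattice)\bigr)$, set $x=\bigvee_{s\in A}j_s$; the orthogonality of $(A,B)$ gives $j_s\leq m_t$ for all $s\in A$, $t\in B$, whence $x\leq m_t$, and maximality of $(A,B)$ then forces $A(x)=A$, $B(x)=B$. Finally, the identities $A(x\wedge y)=A(x)\cap A(y)$ and $B(x\vee y)=B(x)\cap B(y)$ show $\phi$ respects the lattice operations.

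For the converse direction, given $G=([k],E)$ with $(s,t)\in E\Rightarrow s>t$, the goal is to establish $k=\len\bigl(\mathsf{MO}(G)\bigr)=\bigl|\JI(\mathsf{MO}(G))\bigr|=\bigl|\MI(\mathsf{MO}(G))\bigr|$. A natural strategy is to build a length-$k$ chain by iteratively enlarging the first component: the orientation condition ensures that $([i],[k]\setminus[i])$ is orthogonal (edges from $[i]$ can only target $[i]$), though not always maximal, so one passes to its maximal-orthogonal completion at each step. For each $s\in[k]$, the minimal maximal orthogonal pair whose first component contains $s$ should then serve as a join-irreducible, yielding $k$ distinct join-irreducibles; meet-irreducibles are handled dually. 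The main obstacle is precisely here: ensuring the chain has exactly $k$ cover relations and that the $k$ candidate join- (and meet-) irreducibles are all distinct and exhaustive requires a delicate argument, most naturally an induction on $k$ or an analysis showing that every cover in $\mathsf{MO}(G)$ corresponds to a unique index being added to the first component. Once such a labeling of covers is established, the equalities $|\JI|=|\MI|=\len\bigl(\mathsf{MO}(G)\bigr)=k$ fall into place.
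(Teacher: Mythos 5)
The paper itself offers no proof of this statement---it is imported verbatim from Markowsky \cite{markowsky92primes}*{Theorem~11}---so the only question is whether your argument stands on its own. Your forward direction does: the observation that $j_{s}\not\leq m_{s}$, the verification that $\bigl(A(x),B(x)\bigr)$ is a maximal orthogonal pair using $x=\bigvee_{s\in A(x)}j_{s}=\bigwedge_{t\in B(x)}m_{t}$, and the compatibility of $\phi$ with the componentwise descriptions of meet and join in $\mathsf{MO}\bigl(\Galois(\Lattice)\bigr)$ are all correct and complete.

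The converse direction, however, is only a plan, and you say so yourself: you never establish $\len\bigl(\mathsf{MO}(G)\bigr)=\bigl\lvert\JI\bigl(\mathsf{MO}(G)\bigr)\bigr\rvert=\bigl\lvert\MI\bigl(\mathsf{MO}(G)\bigr)\bigr\rvert=k$, which is the entire content of that implication, so as written this half of the theorem is unproven. Moreover, your one concrete claim there is off: $\bigl([i],[k]\setminus[i]\bigr)$ is not merely orthogonal but automatically \emph{maximal} orthogonal, because its two components already partition $[k]$, so neither can be enlarged without violating disjointness; no completion step is needed. This at once gives a chain of $k+1$ distinct elements and hence $\len\bigl(\mathsf{MO}(G)\bigr)\geq k$. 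For the upper bound you do not need to label covers at all. Since meets in $\mathsf{MO}(G)$ are computed by intersecting first components, for each $s\in[k]$ there is a minimum element $J_{s}$ among the maximal orthogonal pairs whose first component contains $s$. Every $(A,B)\in\mathsf{MO}(G)$ satisfies $(A,B)=\bigvee_{s\in A}J_{s}$: each $J_{s}$ with $s\in A$ is $\sqsubseteq(A,B)$, while the first component of the join contains every such $s$ and therefore contains $A$. Consequently every join-irreducible element equals some $J_{s}$, so $\bigl\lvert\JI\bigl(\mathsf{MO}(G)\bigr)\bigr\rvert\leq k$, and dually $\bigl\lvert\MI\bigl(\mathsf{MO}(G)\bigr)\bigr\rvert\leq k$. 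Combined with the inequality $\len(\Lattice)\leq\min\bigl(\lvert\JI(\Lattice)\rvert,\lvert\MI(\Lattice)\rvert\bigr)$, valid in any finite lattice, all three quantities equal $k$. Some argument of this kind must be supplied before the converse can be considered proved.
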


We now return to studying the Galois graph of $\Hoch(n)$.  Let us consider the following triwords of length $n$:
\begin{align*}
	\afr^{(i)} & \defs (\underbrace{1,1,\ldots,1}_{i},0,0,\ldots,0) && \text{for}\;i\in[n],\\
	\bfr^{(i)} & \defs (0,0,\ldots,0,\underset{\underset{i}{\uparrow}}{2},0,0,\ldots,0) && \text{for}\;i\in\{2,3,\ldots,n\}.
\end{align*}
By construction, each of these elements is join irreducible in $\Hoch(n)$.  Inductively---using the doubling construction explained at the end of Section~\ref{sec:triwords}---we may verify that the number of join-irreducible elements in $\Hoch(n)$ is $2n-1$, which implies that every join-irreducible element of $\Hoch(n)$ is of the form $\afr^{(i)}$ or $\bfr^{(i)}$ for some appropriate choice of $i$.

Moreover, let $\ofr\defs(0,0,\ldots,0)$ and $\tfr\defs(1,2,2,\ldots,2)$ be the bottom and top elements of $\Hoch(n)$.  We may now prove Theorem~\ref{thm:hochschild_galois_graph}.

\begin{proof}[Proof of Theorem~\ref{thm:hochschild_galois_graph}]
	Recall that 
	\begin{displaymath}
		\JI\bigl(\Hoch(n)\bigr)=\bigl\{\afr^{(1)},\afr^{(2)},\ldots,\afr^{(n)},\bfr^{(2)},\bfr^{(3)},\ldots,\bfr^{(n)}\bigr\}.
	\end{displaymath}
	By construction, $\afr^{(i)}_{*}=\afr^{(i-1)}$ for $i\in\{2,3,\ldots,n\}$ and $\afr^{(1)}_{*}=\bfr^{(i)}_{*}=\ofr$ for $i\in\{2,3,\ldots,n\}$.

	By Lemma~\ref{lem:extremal_congruence_uniform_galois}, we may realize the Galois graph of $\Hoch(n)$ as a directed graph with vertex set $\JI\bigl(\Hoch(n)\bigr)$, which has an edge $\jfr\to \jfr'$ if and only if $\jfr\neq \jfr'$ and $\jfr'\leq \jfr'_{*}\vee \jfr$.
	
	If $\jfr'=\bfr^{(i)}$ for $2\leq i\leq n$, then $\bfr^{(i)}\not\leq \ofr\vee \jfr=\jfr$ for any $\jfr\in\JI\bigl(\Hoch(n)\bigr)\setminus\{\bfr^{(i)}\}$.
	
	If $\jfr'=\afr^{(1)}$, then $\afr^{(1)}\leq \ofr\vee \jfr=\jfr$ if and only if $\jfr=\afr^{(i)}$ for $i>1$.
	
	If $\jfr'=\afr^{(i)}$ for $2\leq i\leq n$, then $\afr^{(i)}\leq \afr^{(i-1)}\vee \jfr$ if and only if $\jfr=\bfr^{(i)}$ or $\jfr=\afr^{(s)}$ for $s>i$.
	
	The claim in the statement then follows by identifying $\afr^{(i)}$ with $(1,i)$ and $\bfr^{(i)}$ with $(2,i)$.
\end{proof}

Figure~\ref{fig:hochschild_galois_graphs} shows $\Galois\bigl(\Hoch(3)\bigr)$ and $\Galois\bigl(\Hoch(4)\bigr)$.  Figure~\ref{fig:hochschild_3_orthogonal_pairs} shows the lattice of orthogonal pairs of $\Galois\bigl(\Hoch(3)\bigr)$.

\begin{figure}
	\centering
	\begin{subfigure}[t]{.45\textwidth}
		\centering
		\begin{tikzpicture}\small
			\def\x{1};
			\def\y{1};
			\draw(1*\x,1*\y) node(n1){$(2,3)$};
			\draw(1*\x,2*\y) node(n2){$(1,3)$};
			\draw(1*\x,3*\y) node(n3){$(1,2)$};
			\draw(3*\x,3*\y) node(n4){$(1,1)$};
			\draw(1*\x,4*\y) node(n5){$(2,2)$};
			\draw[->](n1) -- (n2);
			\draw[->](n2) -- (n3);
			\draw[->](n2) -- (n4);
			\draw[->](n3) -- (n4);
			\draw[->](n5) -- (n3);
		\end{tikzpicture}
		\caption{$\Galois\bigl(\Hoch(3)\bigr)$.}
		\label{fig:hochschild_3_galois}
	\end{subfigure}
	\hspace*{1cm}
	\begin{subfigure}[t]{.45\textwidth}
		\centering
		\begin{tikzpicture}\small
			\def\x{1};
			\def\y{1};
			\draw(1*\x,1*\y) node(n1){$(2,3)$};
			\draw(3*\x,1*\y) node(n2){$(2,4)$};
			\draw(1*\x,2*\y) node(n3){$(1,3)$};
			\draw(3*\x,2*\y) node(n4){$(1,4)$};
			\draw(1*\x,3*\y) node(n5){$(1,2)$};
			\draw(3*\x,3*\y) node(n6){$(1,1)$};
			\draw(1*\x,4*\y) node(n7){$(2,2)$};
			\draw[->](n1) -- (n3);
			\draw[->](n2) -- (n4);
			\draw[->](n3) -- (n5);
			\draw[->](n3) -- (n6);
			\draw[->](n4) -- (n3);
			\draw[->](n4) -- (n5);
			\draw[->](n4) -- (n6);
			\draw[->](n5) -- (n6);
			\draw[->](n7) -- (n5);
		\end{tikzpicture}
		\caption{$\Galois\bigl(\Hoch(4)\bigr)$.}
		\label{fig:hochschild_4_galois}
	\end{subfigure}
	\caption{Two Galois graphs of Hochschild lattices.}
	\label{fig:hochschild_galois_graphs}
\end{figure}
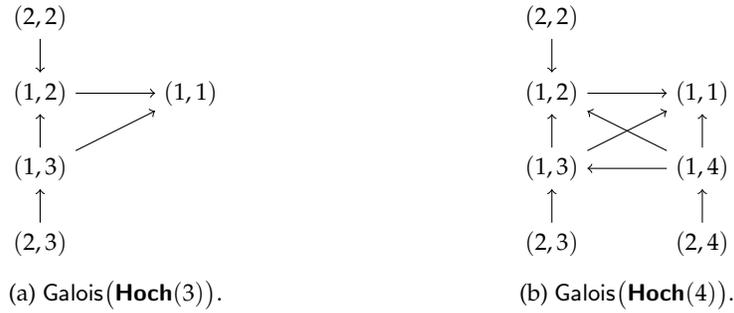

\begin{figure}
	\centering
	\begin{tikzpicture}\small
		\def\x{2};
		\def\y{1.5};
		\def\s{.85};
		\draw(3*\x,1*\y) node[scale=\s](n1){$\Bigl(-,11|12|13|22|23\Bigr)$};
		\draw(2*\x,2*\y) node[scale=\s](n2){$\Bigl(11,12|13|22|23\Bigr)$};
		\draw(2*\x,3*\y) node[scale=\s](n3){$\Bigl(11|12,13|22|23\Bigr)$};
		\draw(3*\x,3*\y) node[scale=\s](n4){$\Bigl(22,11|13|23\Bigr)$};
		\draw(7*\x,3*\y) node[scale=\s](n5){$\Bigl(23,11|12|22\Bigr)$};
		\draw(2*\x,4*\y) node[scale=\s](n6){$\Bigl(11|12|22,13|23\Bigr)$};
		\draw(4*\x,4*\y) node[scale=\s](n7){$\Bigl(11|12|13,22|23\Bigr)$};
		\draw(6*\x,4*\y) node[scale=\s](n8){$\Bigl(11|23,12|22\Bigr)$};
		\draw(4*\x,5*\y) node[scale=\s](n9){$\Bigl(11|12|13|22,23\Bigr)$};
		\draw(6*\x,5*\y) node[scale=\s](n10){$\Bigl(11|12|13|23,22\Bigr)$};
		\draw(7*\x,5*\y) node[scale=\s](n11){$\Bigl(22|23,11\Bigr)$};
		\draw(6*\x,6*\y) node[scale=\s](n12){$\Bigl(11|12|13|22|23,-\Bigr)$};
		\draw(n1) -- (n2);
		\draw(n1) -- (n4);
		\draw(n1) -- (n5);
		\draw(n2) -- (n3);
		\draw(n2) -- (n8);
		\draw(n3) -- (n6);
		\draw(n3) -- (n7);
		\draw(n4) -- (n6);
		\draw(n4) -- (n11);
		\draw(n5) -- (n8);
		\draw(n5) -- (n11);
		\draw(n6) -- (n9);
		\draw(n7) -- (n9);
		\draw(n7) -- (n10);
		\draw(n8) -- (n10);
		\draw(n9) -- (n12);
		\draw(n10) -- (n12);
		\draw(n11) -- (n12);
	\end{tikzpicture}
	\caption{The lattice of maximal orthogonal pairs of $\Galois\bigl(\Hoch(3)\bigr)$.  For brevity, we have abbreviated pairs $(s,t)$ by the word $st$, omitted set parentheses and replaced commas by vertical bars.}
	\label{fig:hochschild_3_orthogonal_pairs}
\end{figure}
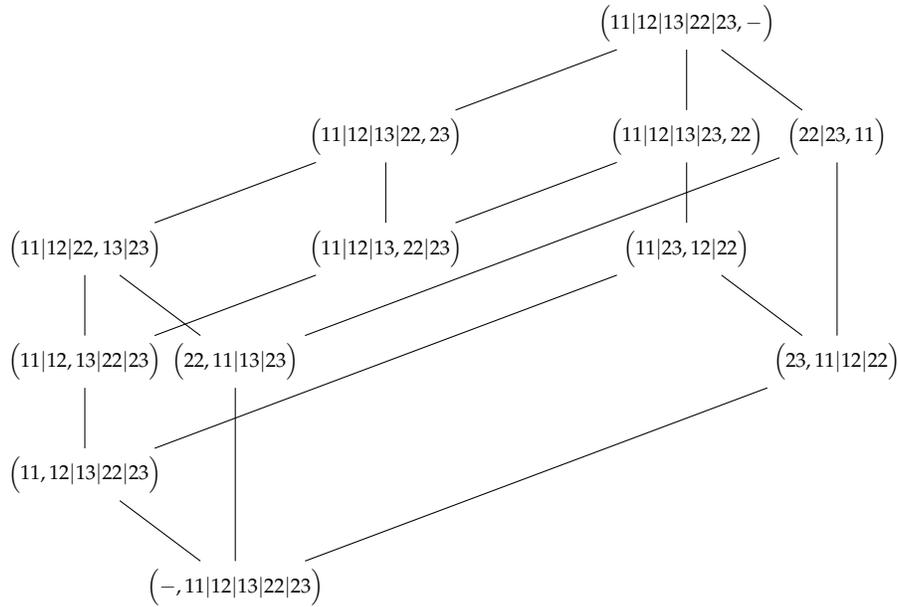

\section{The canonical join complex of $\Hoch(n)$}
	\label{sec:hochschild_join_complex}
\subsection{Join-semidistributive lattices}
	\label{sec:join_semidistributive_lattices}
A lattice $\Lattice=(L,\leq)$ is \defn{join semidistributive} if for all $a,b,c\in L$:
\begin{equation}\label{eq:join_semidistributivity}
	a\vee b=a\vee c\quad\text{implies}\quad a\vee(b\wedge c).\tag{JSD}
\end{equation}
A \defn{join representation} of $a\in L$ is a nonempty subset $A\subseteq L$ such that $\bigvee A=a$.  For two join representations $A_{1},A_{2}$ of $a$ we say that $A_{1}$ \defn{refines} $A_{2}$ if for every $a_{1}\in A_{1}$ there exists $a_{2}\in A_{2}$ such that $a_{1}\leq a_{2}$.  A join representation of $a$ is \defn{canonical} if it refines every other join representation of $a$.  A canonical join representation---when it exists---is necessarily an antichain of join-irreducible elements.

\begin{theorem}[\cite{freese95free}*{Theorem~4.2}]\label{thm:join_semidistributive_joinreps}
	A finite lattice is join semidistributive if and only if every element has a canonical join representation.
\end{theorem}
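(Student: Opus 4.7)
The plan is to prove the two implications separately. The forward direction (canonical join representations exist $\Rightarrow$ (JSD)) is a quick definitional argument: given $a, b, c$ with $d := a \vee b = a \vee c$, the canonical join representation $C$ of $d$ refines both $\{a, b\}$ and $\{a, c\}$, so each $j \in C$ satisfies $j \leq a$, or both $j \leq b$ and $j \leq c$. In the latter case $j \leq b \wedge c$, so in either case $j \leq a \vee (b \wedge c)$. Joining over $C$ yields $d \leq a \vee (b \wedge c) \leq d$, establishing (JSD).

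For the reverse direction, fix $a \in L$ and order its join representations by refinement. I would prove that, under (JSD), the refinement-minimal join representations of $a$ form a singleton $\{A\}$, and then verify that this unique $A$ refines every join representation of $a$ and is therefore canonical. Existence of a refinement-minimal representation is a routine descending-chain argument using finiteness of $L$. A short preliminary observation shows that every element of such a refinement-minimal $A$ must be join-irreducible: otherwise, writing a non-irreducible element $x \in A$ as $x = x' \vee x''$ with $x', x'' < x$ produces the strict refinement $(A \setminus \{x\}) \cup \{x', x''\}$.

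The main obstacle, and the only place where (JSD) is used essentially, is the uniqueness of the refinement-minimal representation. Given two such representations $A$ and $A'$ of $a$, I would fix $j \in A$, set $b := \bigvee (A \setminus \{j\})$, and partition $A' = A'_1 \sqcup A'_2$ with $A'_1 := \{j' \in A' : j' \leq b\}$ and $A'_2 := A' \setminus A'_1$, writing $d := \bigvee A'_2$. Since $\bigvee A'_1 \leq b$ and $\bigvee A' = a$, one gets $a = b \vee d = j \vee b$; applying (JSD) to these two joint expressions for $a$ yields $a = b \vee (j \wedge d)$. Refinement-minimality of $A$ then forces $j \wedge d = j$, i.e., $j \leq d = \bigvee A'_2$. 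A further iterative step, reapplying (JSD) inside $A'_2$ and exploiting join-irreducibility of its elements, should force $j$ itself to lie in $A'$; by symmetry $A = A'$. Once uniqueness is in hand, any join representation $B$ of $a$ descends via finitely many refinement steps to $A$, so $A$ refines $B$ and is therefore canonical. The delicate point is upgrading $j \leq \bigvee A'_2$ to $j \in A'_2$, which is where I expect the bulk of the technical work to concentrate, since join-irreducibility alone does not imply join-primeness in a (JSD) lattice and the argument must instead recursively exploit (JSD) against smaller sub-joins of $A'$.
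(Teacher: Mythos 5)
The paper offers no proof of this statement---it is quoted verbatim from \cite{freese95free}---so there is nothing internal to compare against; I can only assess your argument on its own terms. Your forward direction is correct and complete: the canonical join representation $C$ of $d=a\vee b=a\vee c$ refines both $\{a,b\}$ and $\{a,c\}$, each $j\in C$ therefore satisfies $j\leq a$ or $j\leq b\wedge c$, and joining gives $d\leq a\vee(b\wedge c)\leq d$.

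The reverse direction has a genuine gap, precisely where you flag it. Having derived $j\leq\bigvee A_{2}'$, you need $j$ to lie below a \emph{single} element of $A'$, and the proposed remedy---``reapplying (JSD) inside $A_{2}'$ and exploiting join-irreducibility''---does not go through as described: join-irreducible elements of a join-semidistributive lattice need not be join-prime (as you yourself note), and further applications of the binary law \eqref{eq:join_semidistributivity} to sub-joins of $A_{2}'$ only replace $d$ by meets of the form $j\wedge(\cdots)$ without ever isolating one joinand of $A'$. The missing ingredient is the $n$-ary consequence of \eqref{eq:join_semidistributivity}, proved by induction: if $a=\bigvee A=\bigvee A'$, then $a=\bigvee\{x\wedge y\mid x\in A,\ y\in A'\}$. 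Minimality of $A$ then forces each $j\in A$ below some $x\wedge y\leq y\in A'$, which is the refinement statement you actually need (membership $j\in A'$ is not required for canonicity, only refinement). Two further points need care: refinement is only a preorder---distinct join representations can refine each other---so ``refinement-minimal'' must be taken among antichains for your appeals to minimality (e.g.\ to conclude $j\wedge d=j$) to be legitimate; and for finite lattices there is a much shorter route, the one the paper itself uses downstream in Theorem~\ref{thm:joinreps_labels}: condition \eqref{eq:join_semidistributivity} makes the set $\{c\mid b\vee c=a\}$ closed under meets, so the label $\jsdlabeling(b,a)$ of \eqref{eq:jsd_labeling} is a well-defined minimum, and one checks directly that $\{\jsdlabeling(b,a)\mid b\lessdot a\}$ joins to $a$ and refines every join representation of $a$.
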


A join semidistributive lattice $\Lattice$ admits a natural edge-labeling, defined by
\begin{equation}\label{eq:jsd_labeling}
	\jsdlabeling\colon\Covers(\Lattice)\to\JI(\Lattice), \quad (a,b)\mapsto\min\{c\mid a\vee c=b\}.
\end{equation}
It is quickly verified, using \eqref{eq:join_semidistributivity}, that the codomain of $\jsdlabeling$ is indeed $\JI(\Lattice)$.  Figure~\ref{fig:hochschild_3} illustrates this labeling on $\Hoch(3)$.

This labeling has another nice characterization.  Two cover relations $(a,b),(c,d)\in\Covers(\Lattice)$ are \defn{perspective} if either $a\vee d=b$ and $a\wedge d=c$, or $c\vee b=d$ and $c\wedge b=a$.  In that case we write $(a,b)\perspective(c,d)$.

\begin{lemma}\label{lem:perspective_irreducibles}
	Let $(a,b)\in\Covers(\Lattice)$ and $j\in\JI(\Lattice)$.  If $(a,b)\perspective(j_{*},j)$, then $j\leq b$.
\end{lemma}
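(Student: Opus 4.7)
The plan is to unpack the definition of perspectivity and dispatch the two cases directly. By definition, $(a,b) \perspective (j_*, j)$ means one of the following two situations holds:
\begin{enumerate}
\item $a \vee j = b$ and $a \wedge j = j_*$;
\item $j_* \vee b = j$ and $j_* \wedge b = a$.
\end{enumerate}

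In case~(1), the conclusion is immediate: from $a \vee j = b$ we read off $j \leq a \vee j = b$.

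In case~(2), I would use the join-irreducibility of $j$ as follows. The hypothesis $j_* \vee b = j$ expresses $j$ as a join of two elements, so by the definition of join-irreducibility we must have $j = j_*$ or $j = b$. Since $j_* \lessdot j$ forces $j_* \neq j$, we are left with $b = j$, and in particular $j \leq b$. (Observe that the second equation $j_* \wedge b = a$ is not even needed for the conclusion, but it is consistent with $b = j$, giving $a = j_* \wedge j = j_*$.)

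There is no real obstacle here; this is just a direct verification from the definitions of perspectivity and join-irreducibility, and the dual statement (that in case~(2) one gets $a = j_*$ and $b = j$) is a pleasant byproduct. The lemma is essentially saying that the label $\jsdlabeling(a,b)$, when it equals $j$, witnesses $j \leq b$, which is the minimal sanity check needed before using $\jsdlabeling$ to describe canonical join representations in Section~\ref{sec:hochschild_join_complex}.
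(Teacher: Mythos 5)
Your proof is correct and follows essentially the same route as the paper: case~(1) is immediate from $a\vee j=b$, and case~(2) is settled by join-irreducibility applied to $j_{*}\vee b=j$. The paper phrases the second case as a small contradiction ($b<j$ would force $b\leq j_{*}$, contradicting $b\vee j_{*}=j$), while you apply the definition of join-irreducibility directly to conclude $b=j$; the two arguments are interchangeable.
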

\begin{proof}
	If $(a,b)\perspective(j_{*},j)$, then by definition: $j\leq b$ or $b\leq j$.  If $b<j$, however, then $b\leq j_{*}$ since $j\in\JI(\Lattice)$, contradicting $b\vee j_{*}=j$.  
\end{proof}

\begin{proposition}\label{prop:jsd_labeling_perspectivity}
	Let $\Lattice$ be a join-semidistributive lattice, and let $(a,b)\in\Covers(\Lattice)$, $j\in\JI(\Lattice)$.  Then, $\jsdlabeling(a,b)=j$ if and only if $(a,b)\perspective(j_{*},j)$.
\end{proposition}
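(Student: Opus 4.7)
The plan is to prove the two directions separately, with the minimality characterization of $\jsdlabeling$ doing most of the heavy lifting in one direction and the join-semidistributivity axiom \eqref{eq:join_semidistributivity} in the other.

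For the forward direction, I would assume $\jsdlabeling(a,b)=j$. By definition $a\vee j=b$ (so in particular $j\not\leq a$) and $j$ is the minimum such element. The crucial preliminary step is to show $j_{*}\leq a$: since $j_{*}<j$, by minimality of $j$ we must have $a\vee j_{*}\neq b$; but $a\vee j_{*}\leq a\vee j=b$ and $a\lessdot b$, so $a\vee j_{*}=a$, giving $j_{*}\leq a$. Now $a\wedge j\leq j$ and $j$ is join-irreducible, so $a\wedge j$ is either $j$ or below $j_{*}$; since $j\not\leq a$ the first option is excluded, so $a\wedge j\leq j_{*}$. Combined with $j_{*}\leq a$ and $j_{*}\leq j$, this yields $a\wedge j=j_{*}$. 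Together with $a\vee j=b$, this is exactly the first clause of the definition of perspectivity, so $(a,b)\perspective(j_{*},j)$.

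For the backward direction, assume $(a,b)\perspective(j_{*},j)$. Lemma~\ref{lem:perspective_irreducibles} gives $j\leq b$. I would first dispose of the second clause of perspectivity: if $j_{*}\vee b=j$ and $j_{*}\wedge b=a$, then $j_{*}\leq j\leq b$ forces $j_{*}\vee b=b=j$, hence $a=j_{*}\wedge j=j_{*}$, so $(a,b)=(j_{*},j)$ and the first clause holds trivially. So without loss of generality $a\vee j=b$ and $a\wedge j=j_{*}$. To see $j$ is the minimum $c$ with $a\vee c=b$, take any such $c$. Then $a\vee c=a\vee j$, and \eqref{eq:join_semidistributivity} gives $a\vee(c\wedge j)=b$. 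Since $c\wedge j\leq j$ and $j\in\JI(\Lattice)$, either $c\wedge j=j$ (giving $j\leq c$ as desired) or $c\wedge j\leq j_{*}\leq a$; but the latter yields $a\vee(c\wedge j)=a\neq b$, a contradiction. Hence $j\leq c$ for every such $c$, so $\jsdlabeling(a,b)=j$.

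The main obstacle is the forward direction: one has to extract $j_{*}\leq a$ from nothing more than the minimality of $j$ in the fiber $\{c\mid a\vee c=b\}$, and this is the single step that could be overlooked. Everything else is a direct manipulation of join-irreducibility together with a clean application of \eqref{eq:join_semidistributivity}.
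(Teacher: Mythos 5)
Your proof is correct and follows essentially the same route as the paper's: the same minimality argument yields $a\vee j_{*}=a$ and hence $a\wedge j=j_{*}$ in the forward direction, and join-irreducibility together with \eqref{eq:join_semidistributivity} pins down the label in the backward direction. If anything, you are slightly more careful than the paper in explicitly disposing of the second clause of the perspectivity relation (which degenerates to $(a,b)=(j_{*},j)$) and in spelling out why $j$ is the minimum of the fiber $\{c\mid a\vee c=b\}$, points the paper leaves implicit.
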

\begin{proof}
	If $\jsdlabeling(a,b)=j$, then by definition $a\vee j=b$, forcing $a\wedge j<j$.  Moreover, $j$ is minimal with the property that $a\vee j=b$, forcing $a\vee j_{*}<b$.  Since $a\lessdot b$, $a\vee j_{*}=a$, which implies $a\wedge j=j_{*}$ because $j\in\JI(\Lattice)$.  But this means $(a,b)\perspective(j_{*},j)$.
	
	Conversely, if $(a,b)\perspective(j_{*},j)$, then Lemma~\ref{lem:perspective_irreducibles} implies $j\leq b$ and thus $a\vee j=b$.  By \eqref{eq:jsd_labeling}, $\jsdlabeling(a,b)\leq j$.  However, $j_{*}\leq a$, which implies $\jsdlabeling(a,b)\not\leq j_{*}$.  Since $j\in\JI(\Lattice)$, it follows that $\jsdlabeling(a,b)=j$.
\end{proof}

We may use $\jsdlabeling$ to describe canonical join representations in $\Lattice$.

\begin{theorem}[\cite{barnard19canonical}*{Lemma~19}]\label{thm:joinreps_labels}
	Let $\Lattice=(L,\leq)$ be a join-semidistributive lattice.  The canonical join representation of $a\in L$ is $\bigl\{\jsdlabeling(b,a)\mid b\lessdot a\bigr\}$.
\end{theorem}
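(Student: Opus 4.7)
Let $C\defs\{\jsdlabeling(b,a)\mid b\lessdot a\}$.  The plan is to verify three properties which, together with the fact that $\jsdlabeling$ lands in $\JI(\Lattice)$, identify $C$ as the canonical join representation of $a$: (i) $\bigvee C=a$; (ii) $C$ refines every join representation of $a$; and (iii) $C$ is an antichain.

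For (i), I would observe that each $\jsdlabeling(b,a)$ lies weakly below $a$ by definition, so $\bigvee C\leq a$; and if this inequality were strict, finiteness of $\Lattice$ would supply a lower cover $b\lessdot a$ with $\bigvee C\leq b$, whereupon $\jsdlabeling(b,a)\leq b$ would contradict $b\vee\jsdlabeling(b,a)=a$.  For (ii), given a join representation $A$ of $a$ and an element $c=\jsdlabeling(b,a)\in C$, I would note that some $a'\in A$ must fail to lie below $b$, because otherwise $\bigvee A\leq b<a$; then $b\lessdot a$ forces $b\vee a'=a$, and the minimality in the definition of $\jsdlabeling(b,a)$ yields $c\leq a'$.

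The main obstacle is (iii), and this is the step that invokes \eqref{eq:join_semidistributivity} essentially.  Suppose for contradiction that $c_{1}=\jsdlabeling(b_{1},a)<c_{2}=\jsdlabeling(b_{2},a)$ with $c_{1}\neq c_{2}$; then $b_{1}\neq b_{2}$, and since these are distinct lower covers of $a$ we have $b_{1}\vee b_{2}=a$.  Applying \eqref{eq:join_semidistributivity} to the identities $b_{2}\vee b_{1}=a=b_{2}\vee c_{2}$ yields $b_{2}\vee(b_{1}\wedge c_{2})=a$, and the minimality of $c_{2}$ then forces $c_{2}\leq b_{1}\wedge c_{2}$, so $c_{2}\leq b_{1}$.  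But then $c_{1}\leq c_{2}\leq b_{1}$ contradicts $b_{1}\vee c_{1}=a$.

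Once (i)--(iii) are established, $C$ is an antichain of join-irreducibles whose join is $a$ and which refines every other join representation of $a$; hence $C$ is the canonical join representation of $a$.  The only non-routine ingredient is the antichain step, where join-semidistributivity is used essentially; the other pieces rely only on the minimality built into $\jsdlabeling$ and on finiteness of $\Lattice$.
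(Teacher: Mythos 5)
Your argument is correct. The paper does not prove this statement at all---it is quoted from \cite{barnard19canonical}*{Lemma~19} as a black box---and your three steps (join equals $a$ by finiteness, refinement via the minimality of $\jsdlabeling$, antichain via \eqref{eq:join_semidistributivity} applied to $b_{2}\vee b_{1}=a=b_{2}\vee c_{2}$) reproduce the standard proof from that reference; the only implicit ingredients are the well-definedness of the minimum in \eqref{eq:jsd_labeling} (which the paper already grants) and the trivial convention for $a=\least$, where the set of lower covers is empty.
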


If $\Lattice$ satisfies both \eqref{eq:join_semidistributivity} and the dual condition (obtained by switching $\vee$ and $\wedge$), then $\Lattice$ is \defn{semidistributive}.  It is well known that if $\Lattice$ is semidistributive, then $\mu(\Lattice)\in\{-1,0,1\}$ (see for instance \cite{muehle19the}*{Theorem~2.12} for a proof).  If $\mu(\Lattice)\neq 0$, then $\Lattice$ is \defn{spherical}.  By \cite{muehle19the}*{Proposition~2.13}, $\Lattice$ is spherical if and only if $\grtst$ is the join over all atoms of $\Lattice$.  We record the observation that every congruence-uniform lattice is semidistributive.

\begin{theorem}[\cite{day79characterizations}*{Theorem~4.2}]\label{thm:congruence_uniform_is_semidistributive}
	Every congruence-uniform lattice is semidistributive.
\end{theorem}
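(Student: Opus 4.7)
The plan is to proceed by induction on the number of interval doublings required to obtain $\Lattice$ from the singleton lattice---the very construction that defines congruence uniformity. The singleton lattice trivially satisfies \eqref{eq:join_semidistributivity} and its dual. It therefore suffices to prove that if $\Lattice=(L,\leq)$ is semidistributive and $B\subseteq L$ is an interval of $\Lattice$, then the doubled lattice $\Lattice[B]$ is semidistributive as well.

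To carry out the inductive step I would first derive explicit formulas for joins and meets in $\Lattice[B]$ in terms of those in $\Lattice$. Elements of $\Lattice[B]$ are pairs $(x,i)$ with $i\in\{0,1\}$ constrained as in the definition of $L[B]$; since the componentwise order $\comp$ is used, joins and meets in $\Lattice[B]$ are computed by taking the corresponding operation on the first coordinates in $\Lattice$, adjusting the second coordinate upward (for joins) or downward (for meets), and possibly clamping the first coordinate onto the top or bottom of $B$ so as to land in $L[B]$. Equipped with these formulas, the verification of \eqref{eq:join_semidistributivity} in $\Lattice[B]$ becomes a case analysis on the second coordinates of the triple $(a,b,c)$: in every case the hypothesis $a\vee b=a\vee c$ either strips away its second coordinate and reduces to an instance of \eqref{eq:join_semidistributivity} in $\Lattice$ (which is available by the inductive hypothesis), or else is rendered automatic by the second coordinate alone, and the desired conclusion follows.

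The dual identity, meet semidistributivity in $\Lattice[B]$, is obtained by an entirely analogous argument, or more economically by invoking a self-duality of the doubling construction: the dual of $\Lattice[B]$ is isomorphic to the doubling of the dual of $\Lattice$ by the interval corresponding to $B$, whence meet semidistributivity in $\Lattice[B]$ reduces to join semidistributivity in a suitable doubled lattice. The main obstacle is the bookkeeping in the case analysis; in particular, when the three elements $a,b,c$ straddle the boundary between the bottom copy $\Lattice_{\leq B}\times\{0\}$ and the top copy, the joins and meets in $\Lattice[B]$ may or may not fall inside $B$, and one must verify that these boundary cases produce exactly the right second coordinate in the conclusion of \eqref{eq:join_semidistributivity}.
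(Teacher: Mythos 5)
The paper offers no proof of this statement: it is imported wholesale from Day's 1979 article (Theorem~4.2 there), so there is no internal argument to compare yours against. That said, your outline is essentially Day's own proof---induct on the number of interval doublings and show that doubling a semidistributive lattice by an interval preserves semidistributivity---and it does go through. Two remarks on the details you defer. First, with the definition of $P[B]$ used in Section~\ref{sec:lattices} (bottom copy the full order ideal $\Poset_{\leq B}$, top copy $\bigl((P\setminus\Poset_{\leq B})\cup B\bigr)\times\{1\}$) the first coordinate is never ``clamped'': writing $B=[u,v]$, the join of $(x,i)$ and $(y,j)$ is exactly $\bigl(x\vee y,\max(i,j)\bigr)$, and the meet is $\bigl(x\wedge y,\min(i,j)\bigr)$ except that the second coordinate drops to $0$ when $\min(i,j)=1$ but $x\wedge y\leq v$ and $x\wedge y\not\geq u$. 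Second, the boundary case you flag is indeed the only nontrivial one in verifying \eqref{eq:join_semidistributivity}: with $a=(x,0)$, $b=(y,1)$, $c=(z,1)$ one must exclude that $b\wedge c$ falls into the bottom copy while $a\vee b$ lies in the top copy; this is impossible because if $y\wedge z\leq v$ then $x\vee y=x\vee(y\wedge z)\leq v$ by the inductive hypothesis, forcing $y,z\in B$ and hence $y\wedge z\geq u$, which puts $b\wedge c$ back in the top copy. Your appeal to self-duality for the meet half is also legitimate for interval doublings: the obvious identification gives an isomorphism between the dual of $\Lattice[B]$ and the doubling of the dual lattice by $B$, because the only elements whose copy changes under this identification (those neither below $v$ nor above $u$) are incomparable to every element of $B$.
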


\subsection{Canonical join representations in $\Hoch(n)$}
	\label{sec:hochschild_joinreps}
In view of Theorems~\ref{thm:hochschild_properties} and \ref{thm:congruence_uniform_is_semidistributive}, the lattice $\Hoch(n)$ is join semidistributive.  In this section, we describe the canonical join representations in $\Hoch(n)$.  Let
\begin{align*}
	f_{0}\colon\Tri(n)& \to\{1,2,\ldots,n+1\},\\
	\ufr & \mapsto \begin{cases}
		n+1, & \text{if}\;\ufr\;\text{does not contain a letter equal to}\;0,\\
		\min\{i\mid u_{i}=0\}, & \text{otherwise};
	\end{cases}\\
	l_{1}\colon\Tri(n)& \to\{0,1,2,\ldots,n\},\\
	\ufr & \mapsto \begin{cases}
		0, & \text{if}\;\ufr\;\text{does not contain a letter equal to}\;1,\\
		\max\{i\mid u_{i}=1\}, & \text{otherwise}.
	\end{cases}
\end{align*}
In other words, $f_{0}(\ufr)$ describes the position of the first zero and $l_{1}(\ufr)$ describes the position of the last $1$ in $\ufr$.  By \eqref{it:tri3}, it is always guaranteed that $l_{1}(\ufr)<f_{0}(\ufr)$.  

\begin{proposition}\label{prop:hochschild_perspective_labels}
	Let $(\ufr,\vfr)\in\Covers\bigl(\Hoch(n)\bigr)$.  Then:
	\begin{enumerate}[\rm (i)]
		\item $\jsdlabeling(\ufr,\vfr)=\afr^{(i)}$ if and only if $v_{i}=1$ and $u_{i}=0$;
		\item $\jsdlabeling(\ufr,\vfr)=\bfr^{(i)}$ if and only if $v_{i}=2$ and $u_{i}<2$.
	\end{enumerate}
\end{proposition}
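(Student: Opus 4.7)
My plan is to invoke Proposition~\ref{prop:jsd_labeling_perspectivity} in order to reduce each equivalence to a perspectivity check against a specific pair $(\jfr_{*}, \jfr)$. Since $(\ufr, \vfr)$ is a cover relation in $\Hoch(n)$, the observation recalled just before Proposition~\ref{prop:triwords_size} tells me that $\ufr$ and $\vfr$ agree outside a single coordinate $i$, at which $(u_{i}, v_{i})\in\{(0,1),(0,2),(1,2)\}$.  Condition~\ref{it:tri2} applied to $\vfr$ rules out $v_{1}=2$, so whenever the change is $0\to 2$ or $1\to 2$ I automatically have $i\geq 2$, matching the index range on which $\bfr^{(i)}$ is defined.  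This trichotomy---the first sub-case producing a candidate label $\afr^{(i)}$ and the other two producing a candidate label $\bfr^{(i)}$---will suffice to prove both implications simultaneously, since $\jsdlabeling(\ufr,\vfr)$ is a single, well-defined element.

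For each sub-case I will verify the perspectivity $(\ufr,\vfr)\perspective(\jfr_{*},\jfr)$ by computing $\ufr\vee\jfr$ and $\ufr\wedge\jfr$ from the explicit join/meet rule recalled in Section~\ref{sec:triwords}.  In the $(0,1)$-sub-case I take $\jfr=\afr^{(i)}$ (so $\jfr_{*}=\afr^{(i-1)}$, with $\afr^{(0)}$ read as $\ofr$).  The essential input will be that condition~\ref{it:tri3} applied to $\vfr$ forces $v_{j}\in\{1,2\}$ for all $j<i$ (otherwise $(j,i)$ would be a $01$-pattern in $\vfr$); hence also $u_{j}\in\{1,2\}$ there, the componentwise maximum of $\ufr$ and $\afr^{(i)}$ equals $\vfr$, and the componentwise minimum is exactly $\afr^{(i-1)}$, already free of any $01$-pattern.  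In the two sub-cases with $v_{i}=2$ I would instead take $\jfr=\bfr^{(i)}$ (so $\jfr_{*}=\ofr$); the componentwise maximum is visibly $\vfr$, and the componentwise minimum has zeros at every position except $i$, where it equals $u_{i}$.  For $u_{i}=0$ this is $\ofr$ outright, while for $u_{i}=1$ the lone $1$ at position $i\geq 2$ pairs with the $0$ at position $1$ to form a $01$-pattern, so the meet adjustment from Section~\ref{sec:triwords} turns that $1$ into a $0$ and again produces $\ofr$.

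Together these three computations yield the forward direction in each sub-case.  The converses follow immediately because $\jsdlabeling$ is single-valued and the three candidate labels are pairwise distinct: $\afr^{(i)}$ begins with a $1$ whereas $\bfr^{(i)}$ begins with a $0$, and in every sub-case the superscript $i$ is pinned down as the unique coordinate at which $\ufr$ and $\vfr$ differ.  I expect the only real obstacle to be the correct handling of the $01$-adjustment in the meet for the $(u_{i},v_{i})=(1,2)$ sub-case; apart from that, everything reduces to routine componentwise arithmetic.
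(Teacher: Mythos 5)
Your proof is correct and follows essentially the same route as the paper: both reduce the claim to a perspectivity check via Proposition~\ref{prop:jsd_labeling_perspectivity} and then verify $\ufr\vee\jfr=\vfr$ and $\ufr\wedge\jfr=\jfr_{*}$ using the explicit componentwise join/meet rules, including the same $01$-pattern adjustment in the $(u_{i},v_{i})=(1,2)$ case. The only minor difference is that the paper derives the ``only if'' directions directly from Lemma~\ref{lem:perspective_irreducibles}, whereas you obtain them by exhausting the three possible cover types and invoking that $\jsdlabeling$ is single-valued with pairwise distinct candidate labels; both are sound.
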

\begin{proof}
	By Proposition~\ref{prop:jsd_labeling_perspectivity}, $\jsdlabeling(\ufr,\vfr)=\jfr$ if and only if $(\ufr,\vfr)\perspective(\jfr_{*},\jfr)$.  We already know that $\afr^{(i)}_{*}=\afr^{(i-1)}$ if $i>1$ and $\afr^{(1)}_{*}=\bfr^{(i)}_{*}=\ofr$.

	\medskip

	(i) Let $i\geq 1$ and suppose that $v_{i}=1$ and $u_{i}=0$.  By \eqref{it:tri3}, $v_{j}\neq 0$ for all $j<i$ and thus $u_{j}\neq 0$ for all $j<i$, since $\ufr$ and $\vfr$ differ in exactly one letter.  This implies immediately that $\afr^{(i)}\comp\vfr$ and $\afr^{(i)}\not\comp\ufr$, and $\afr^{(i-1)}\comp\ufr$.  (If $i=1$, then we set $\afr^{(i-1)}=\ofr$.)  Thus, $\afr^{(i)}\wedge\ufr=\afr^{(i-1)}$ and $\afr^{(i)}\vee\ufr=\vfr$.  By definition, $(\ufr,\vfr)\perspective(\afr^{(i-1)},\afr^{(i)})$.
	
	Conversely, suppose that $(\ufr,\vfr)\perspective(\afr^{(i-1)},\afr^{(i)})$.  By Lemma~\ref{lem:perspective_irreducibles}, $\afr^{(i)}\comp\vfr$ which implies $v_{i}=1$ and $\afr^{(i)}\not\comp\ufr$ which implies $u_{i}=0$.
	
	\medskip	
	
	(ii) Let $i>1$ and suppose that $v_{i}=2$ and $u_{i}<2$.  Then, $\bfr^{(i)}\vee\ufr=\vfr$.  Let $\mfr=\bfr^{(i)}\wedge\ufr$.  Then, $m_{j}=0$ for all $j\neq i$, which means in particular that $m_{1}=0$.  Thus $m_{i}\neq 1$ by \eqref{it:tri3}.  Since $u_{i}<2$, we must have $m_{i}=0$.  Therefore $\mfr=\ofr$, which implies $(\ufr,\vfr)\perspective(\ofr,\bfr^{(i)})$.
	
	Conversely, suppose that $(\ufr,\vfr)\perspective(\ofr,\bfr^{(i)})$.  By Lemma~\ref{lem:perspective_irreducibles}, $\bfr^{(i)}\comp\vfr$ which implies $v_{i}=2$ and $\bfr^{(i)}\not\comp\ufr$ which implies $u_{i}<2$.
\end{proof}

\begin{proposition}\label{prop:hochschild_canonical_joinrep}
	The canonical join representation of $\ufr\in\Tri(n)$ is
	\begin{equation}\label{eq:hochschild_joinreps}
		\Can(\ufr) = \Bigl\{\afr^{(i)}\mid i=l_{1}(u)\;\text{if}\;l_{1}(u)>0\Bigr\}\uplus\Bigl\{\bfr^{(i)}\mid i\in[n]\;\text{such that}\;u_{i}=2\Bigr\}.
	\end{equation}
\end{proposition}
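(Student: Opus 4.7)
The plan is to combine Theorem~\ref{thm:joinreps_labels} with Proposition~\ref{prop:hochschild_perspective_labels}: the canonical join representation $\Can(\ufr)$ is exactly the set of edge-labels $\jsdlabeling(\vfr,\ufr)$ as $\vfr$ ranges over the lower covers of $\ufr$ in $\Hoch(n)$. Recalling from the remark after Proposition~\ref{prop:triwords_size} that $\vfr\lessdot\ufr$ iff $\vfr$ and $\ufr$ differ in exactly one coordinate and $\vfr$ is componentwise smaller, my task reduces to asking, for each position $i\in[n]$, which single-coordinate decrements of $\ufr$ yield an element of $\Tri(n)$, and then reading off the resulting label via Proposition~\ref{prop:hochschild_perspective_labels}.

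For the $\afr^{(i)}$-labels, I would argue as follows. By Proposition~\ref{prop:hochschild_perspective_labels}(i), the label $\afr^{(i)}$ arises only from a cover $\vfr\lessdot\ufr$ with $u_i=1$ and $v_i=0$, so in particular such an $i$ is forced to satisfy $u_i=1$. The candidate $\vfr$ is uniquely determined (replace $u_i$ by $0$), so the only question is whether $\vfr\in\Tri(n)$. Axioms \eqref{it:tri1} and \eqref{it:tri2} are trivially preserved, so only \eqref{it:tri3} can fail, and it fails iff $\vfr$ has a $1$ in some position $j>i$, that is, iff $\ufr$ does. Since $u_i=1$, this is equivalent to $i=l_1(\ufr)$. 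Hence $\afr^{(i)}\in\Can(\ufr)$ precisely when $i=l_1(\ufr)>0$, which is the first set on the right-hand side of \eqref{eq:hochschild_joinreps}.

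For the $\bfr^{(i)}$-labels, Proposition~\ref{prop:hochschild_perspective_labels}(ii) forces $u_i=2$ (so in particular $i>1$ by \eqref{it:tri2}), and I must decide when at least one of the two candidate covers, namely $\vfr^{(1)}$ obtained by setting $v_i=1$ and $\vfr^{(0)}$ obtained by setting $v_i=0$, lies in $\Tri(n)$. Both preserve \eqref{it:tri1} and \eqref{it:tri2}, so the issue is again \eqref{it:tri3}: for $\vfr^{(1)}$, validity fails iff there is a $0$ in $\ufr$ at some position $j<i$, i.e.\ iff $f_0(\ufr)<i$; for $\vfr^{(0)}$, validity fails iff there is a $1$ in $\ufr$ at some position $j>i$, i.e.\ iff $l_1(\ufr)>i$. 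The main observation, and the only step that uses the $01$-avoidance of $\ufr$ in a nontrivial way, is that these two obstructions cannot occur simultaneously: a $0$ at position $j<i$ together with a $1$ at position $k>i$ would constitute a $01$-pattern $(j,k)$ in $\ufr$, contradicting \eqref{it:tri3}. Thus whenever $u_i=2$, at least one valid lower cover exists and produces label $\bfr^{(i)}$, giving the second set on the right-hand side of \eqref{eq:hochschild_joinreps}.

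The argument is essentially a case analysis, and I expect no serious obstacle; the only subtle point is the last observation, that the two potential failure modes for $\bfr^{(i)}$ cannot coexist, which is exactly what makes $\bfr^{(i)}$ contribute whenever $u_i=2$ without needing to single out one of the two candidate covers. After that, it remains only to note that the set on the right of \eqref{eq:hochschild_joinreps} is automatically an antichain of join-irreducibles in $\Hoch(n)$, so by Theorem~\ref{thm:joinreps_labels} it coincides with $\Can(\ufr)$.
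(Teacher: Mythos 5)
Your proof is correct and follows essentially the same route as the paper: both reduce, via Theorem~\ref{thm:joinreps_labels} and Proposition~\ref{prop:hochschild_perspective_labels}, to a case analysis of single-coordinate decrements of $\ufr$, and your observation that the two obstructions for $\bfr^{(i)}$ cannot coexist is exactly the paper's dichotomy $i<f_{0}(\ufr)$ versus $i>f_{0}(\ufr)$. One caveat: the ``iff'' you attribute to the remark after Proposition~\ref{prop:triwords_size} is false as stated --- the paper asserts only that covers differ in exactly one coordinate, and the converse fails, e.g.\ $(1,0,0)\comp(1,2,0)$ differ in one coordinate yet $(1,1,0)$ lies strictly between them, so the decrement-to-$0$ at a position $i$ with $u_{i}=2$ need not be a lower cover even when it is a triword. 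This does not damage your conclusion, because every valid decrement at position $i$ carries the same label ($\afr^{(i)}$ or $\bfr^{(i)}$) and at least one of them is a genuine lower cover; but it would matter if you needed the exact set $\Cov_{\downarrow}(\ufr)$ rather than just the set of labels, so the reduction should be phrased more carefully.
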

\begin{proof}
	Let $i\in[n]$ be such that $u_{i}=2$.  If $i<f_{0}(\ufr)$, then let $\vfr^{(i)}$ be defined by decreasing the $i\th$ entry of $\ufr$ by $1$.  If $i>f_{0}(\ufr)$, then let $\wfr^{(i)}$ be defined by decreasing the $i\th$ entry of $\ufr$ by $2$.  If $l_{1}(\ufr)>0$, then let $\mfr$ be defined by decreasing the $l_{1}(\ufr)\th$ entry of $\ufr$ by $1$.  
	
	Since $\ufr\in\Tri(n)$, the tuples $\vfr^{(i)}$, $\wfr^{(i)}$, $\mfr$ are certainly triwords for the appropriate choices of $i$.  By construction, $\vfr^{(i)}\compless\ufr$, $\wfr^{(i)}<_{\mathsf{comp}}\ufr$ and $\mfr\compless\ufr$.  Note that $\wfr^{(i)}$ has a zero in the $i\th$ position and this is not the first zero in $\wfr^{(i)}$, because $i>f_{0}(\ufr)$.  By \eqref{it:tri3}, $\wfr^{(i)}\compless\ufr$, because the only potential tuple that could fit inbetween $\wfr^{(i)}$ and $\ufr$ (in componentwise order) needs to have a $1$ in position $i$.  But this would not be a triword, because it contains the $01$-pattern $\bigl(f_{0}(\ufr),i\bigr)$.
	
	Once more, by construction, every element covered by $\ufr$ is of one of these three forms.  By Proposition~\ref{prop:hochschild_perspective_labels}, $\jsdlabeling(\mfr,\ufr)=\afr^{(l_{1})}$, and $\jsdlabeling(\vfr^{(i)},\ufr)=\jsdlabeling(\wfr^{(i)},\ufr)=\bfr^{(i)}$.  Theorem~\ref{thm:joinreps_labels} finishes the proof.
\end{proof}

\begin{corollary}\label{cor:hochschild_spherical}
	For $n>0$, the lattice $\Hoch(n)$ is spherical.
\end{corollary}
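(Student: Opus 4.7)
The plan is to apply the characterization recalled just before the corollary: a semidistributive lattice is spherical if and only if its top element is the join of all its atoms. Since $\Hoch(n)$ is congruence uniform by Theorem~\ref{thm:hochschild_properties}, and hence semidistributive by Theorem~\ref{thm:congruence_uniform_is_semidistributive}, this characterization applies. So it suffices to exhibit $\tfr=(1,2,2,\ldots,2)$ as a join of atoms.

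First I would identify the atoms of $\Hoch(n)$. Since every atom is join irreducible, and since the join irreducibles were classified in the proof of Theorem~\ref{thm:hochschild_galois_graph} as $\afr^{(1)},\afr^{(2)},\ldots,\afr^{(n)},\bfr^{(2)},\ldots,\bfr^{(n)}$ together with the computation $\afr^{(i)}_{*}=\afr^{(i-1)}$ for $i>1$ and $\afr^{(1)}_{*}=\bfr^{(i)}_{*}=\ofr$, the atoms are precisely $\afr^{(1)},\bfr^{(2)},\bfr^{(3)},\ldots,\bfr^{(n)}$.

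Next I would apply Proposition~\ref{prop:hochschild_canonical_joinrep} to $\tfr=(1,2,2,\ldots,2)$. For this triword we have $l_{1}(\tfr)=1$ and the set of positions $i\in[n]$ with $t_{i}=2$ is $\{2,3,\ldots,n\}$, so
\begin{displaymath}
	\Can(\tfr)=\bigl\{\afr^{(1)}\bigr\}\uplus\bigl\{\bfr^{(i)}\mid 2\leq i\leq n\bigr\}.
\end{displaymath}
Thus the canonical join representation of $\tfr$ consists entirely of atoms, and in particular $\tfr$ is the join of all atoms of $\Hoch(n)$. The characterization above then yields the sphericity of $\Hoch(n)$.

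There is no real obstacle here; this is a short bookkeeping argument that piggybacks on Proposition~\ref{prop:hochschild_canonical_joinrep}. One minor sanity check worth performing is a direct componentwise verification: $\afr^{(1)}$ contributes a $1$ in the first coordinate and the various $\bfr^{(i)}$ each contribute a $2$ in their respective coordinate, so the componentwise maximum is indeed $(1,2,2,\ldots,2)=\tfr$, matching the lattice-theoretic computation.
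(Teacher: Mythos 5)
Your proposal is correct and follows exactly the paper's own argument: compute $\Can(\tfr)$ via Proposition~\ref{prop:hochschild_canonical_joinrep}, observe that it equals the set of atoms, and invoke the characterization of sphericity for semidistributive lattices from \cite{muehle19the}. The extra componentwise sanity check is harmless but not needed.
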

\begin{proof}
	By construction, the top element of $\Hoch(n)$ is $\tfr=(1,2,\ldots,2)$.  By Proposition~\ref{prop:hochschild_canonical_joinrep}, $\Can(\tfr)=\bigl\{\afr^{(1)},\bfr^{(2)},\ldots,\bfr^{(n)}\bigr\}$, which is exactly the set of atoms of $\Hoch(n)$.  Proposition~2.13 in \cite{muehle19the} states that a semidistributive lattice is spherical if and only if the join of its atoms is the top element.  The claim thus follows.
\end{proof}

\subsection{The canonical join complex of a join-semidistributive lattice}
	\label{sec:join_semidistributive_join_complex}
Let $M$ be a finite set of \defn{vertices}.  An (abstract) simplicial complex on $M$ is a non-empty collection $\Delta(M)$ of subsets of $M$ (the \defn{faces}) such that $\{m\}\in\Delta(M)$ for all $m\in M$, and if $F\in\Delta(M)$, then $F'\in\Delta(M)$ for all $F'\subseteq F$.  The maximal faces (with respect to inclusion) are called \defn{facets}.  A simplicial complex with a unique facet is a \defn{simplex}.  A simplicial complex is \defn{pure} if all facets have the same cardinality.

Let $\Delta$ be a simplicial complex, and let $F\in\Delta$ be a face.  The \defn{link} of $F$ in $\Delta$ is the simplicial complex
\begin{displaymath}
	\link_{\Delta}(F) \defs \bigl\{G\in\Delta\mid F\cap G=\emptyset\;\text{and}\;F\cup G\in\Delta\bigr\},
\end{displaymath}
and the \defn{deletion} of $F$ in $\Delta$ is the simplicial complex
\begin{displaymath}
	\del_{\Delta}(F) \defs \bigl\{G\in\Delta\mid F\not\subseteq G\bigr\}.
\end{displaymath}
If $v_{1},v_{2},\ldots,v_{r}$ are distinct vertices of $\Delta$, then we denote by $\Delta\setminus\{v_{1},v_{2},\ldots,v_{r}\}$ the simplicial complex obtained from $\Delta$ by successively deleting the vertices $v_{1},v_{2},\ldots,v_{r}$.

Following \cites{bjorner97shellable,provan80decompositions}, a simplicial complex $\Delta$ is \defn{vertex decomposable} if either $\Delta$ is a simplex, or there exists a \defn{shedding vertex} $v\in\Delta$ satisfying the following three conditions:
\begin{description}
	\item[VD1\label{it:vd1}] $\link_{\Delta}(v)$ is vertex decomposable;
	\item[VD2\label{it:vd2}] $\del_{\Delta}(v)$ is vertex decomposable;
	\item[VD3\label{it:vd3}] no facet of $\link_{\Delta}(v)$ is a facet of $\del_{\Delta}(v)$.
\end{description}

\begin{figure}
	\centering
	\begin{subfigure}[t]{.45\textwidth}
		\centering
		\begin{tikzpicture}\small
			\def\x{1};
			\def\y{1};
			\def\s{.4};
			\coordinate (a3) at (1*\x,1*\y);
			\coordinate (b2) at (2*\x,1*\y);
			\coordinate (b3) at (3*\x,1*\y);
			\coordinate (a2) at (4*\x,1*\y);
			\coordinate (a1) at (2.5*\x,2*\y);
			\draw(a1) node[circle,fill,scale=\s]{};
			\draw(a2) node[circle,fill,scale=\s]{};
			\draw(a3) node[circle,fill,scale=\s]{};
			\draw(b2) node[circle,fill,scale=\s]{};
			\draw(b3) node[circle,fill,scale=\s]{};
			\draw(2.5*\x,2.25*\y) node[scale=.8]{$\afr^{(1)}$};
			\draw(1*\x,.75*\y) node[scale=.8]{$\afr^{(3)}$};
			\draw(2*\x,.75*\y) node[scale=.8]{$\bfr^{(2)}$};
			\draw(3*\x,.75*\y) node[scale=.8]{$\bfr^{(3)}$};
			\draw(4*\x,.75*\y) node[scale=.8]{$\afr^{(2)}$};
			\begin{pgfonlayer}{background}
				\fill[fill=white!50!gray,opacity=.7](b2) -- (a1) -- (b3) -- (b2);
				\draw(b2) -- (a1) -- (b3) -- (b2);
				\draw(a3) -- (b2);
				\draw(a2) -- (b3);
			\end{pgfonlayer}
		\end{tikzpicture}
		\caption{The complex $\CJC\bigl(\Hoch(3)\bigr)$.}
		\label{fig:hochschild_3_cjc}
	\end{subfigure}
	\hspace*{1cm}
	\begin{subfigure}[t]{.45\textwidth}
		\centering
		\begin{tikzpicture}\small
			\def\x{1};
			\def\y{1};
			\def\s{.4};
			\coordinate (a1) at (2.5*\x,2*\y);
			\coordinate (a2) at (4.1*\x,1.6*\y);
			\coordinate (a3) at (2.1*\x,1.6*\y);
			\coordinate (a4) at (1.9*\x,.4*\y);
			\coordinate (b2) at (2*\x,1*\y);
			\coordinate (b3) at (3*\x,1*\y);
			\coordinate (b4) at (3.1*\x,1.6*\y);
			\draw(a1) node[circle,fill,scale=\s]{};
			\draw(a2) node[circle,fill,scale=\s]{};
			\draw(a3) node[circle,fill,scale=\s]{};
			\draw(a4) node[circle,fill,scale=\s]{};
			\draw(b2) node[circle,fill,scale=\s]{};
			\draw(b3) node[circle,fill,scale=\s]{};
			\draw(b4) node[circle,fill,scale=\s]{};
			\draw(2.5*\x,2.25*\y) node[scale=.8]{$\afr^{(1)}$};
			\draw(4.1*\x,1.3*\y) node[scale=.8]{$\afr^{(2)}$};
			\draw(2*\x,1.85*\y) node[scale=.8]{$\afr^{(3)}$};
			\draw(2.3*\x,.4*\y) node[scale=.8]{$\afr^{(4)}$};
			\draw(1.75*\x,1*\y) node[scale=.8]{$\bfr^{(2)}$};
			\draw(3.25*\x,.8*\y) node[scale=.8]{$\bfr^{(3)}$};
			\draw(3.2*\x,1.85*\y) node[scale=.8]{$\bfr^{(4)}$};
			\begin{pgfonlayer}{background}
				\filldraw[draw=black,fill=white!50!gray,opacity=.7](a3) -- (2.3*\x,1.6*\y) -- (b2) -- (a3);
				\fill[fill=white!50!gray,opacity=.7](b2) -- (b3) -- (b4) -- (b2);
				\fill[fill=white!50!gray,opacity=.7](b2) -- (2.3*\x,1.6*\y) -- (b4) -- (b2);
				\filldraw[draw=black,fill=white!50!gray,opacity=.7](a1) -- (b2) -- (b3) -- (a1);
				\filldraw[draw=black,fill=white!50!gray,opacity=.7](a1) -- (b3) -- (b4) -- (a1);
				\filldraw[draw=black,fill=white!50!gray,opacity=.7](a2) -- (b3) -- (b4) -- (a2);
				\filldraw[draw=black,fill=white!50!gray,opacity=.7](a4) -- (b2) -- (b3) -- (a4);
				\draw[dashed](b2) -- (b4);
				\draw[dashed](2.3*\x,1.6*\y) -- (b4);
			\end{pgfonlayer}
		\end{tikzpicture}
		\caption{The complex $\CJC\bigl(\Hoch(4)\bigr)$.}
		\label{fig:hochschild_4_cjc}
	\end{subfigure}
	\caption{Two canonical join complexes of Hochschild lattices.}
	\label{fig:hochschild_join_complexes}
\end{figure}
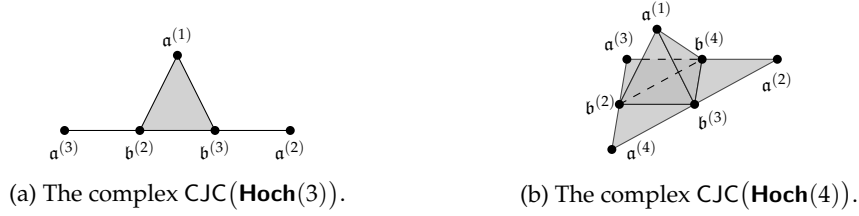

By \cite{reading15noncrossing}*{Proposition~2.2}, every subset of a canonical join representation is again a canonical join representation.  Therefore, the set of canonical join representations of a finite lattice $\Lattice$ forms a simplicial complex with vertex set $\JI(\Lattice)$; the \defn{canonical join complex} denoted by $\CJC(\Lattice)$.  If $\Lattice=(L,\leq)$ is join semidistributive, then Theorem~\ref{thm:join_semidistributive_joinreps} states that the set of faces of $\CJC(\Lattice)$ is in bijection with $L$.  Figure~\ref{fig:hochschild_join_complexes} shows $\CJC\bigl(\Hoch(3)\bigr)$ and $\CJC\bigl(\Hoch(4)\bigr)$.  Let us now prove Theorem~\ref{thm:hochschild_vertex_decomposable}.

\begin{proof}[Proof of Theorem~\ref{thm:hochschild_vertex_decomposable}]
	Throughout this proof for any face $F\in\CJC\bigl(\Hoch(n)\bigr)$, we write $\del(F)$ instead of $\del_{\CJC\bigl(\Hoch(n)\bigr)}(F)$ and $\link(F)$ instead of $\link_{\CJC\bigl(\Hoch(n)\bigr)}(F)$.

	By Proposition~\ref{prop:hochschild_canonical_joinrep}, the facets of $\CJC\bigl(\Hoch(n)\bigr)$ are
	\begin{align*}
		F_{1} & = \bigl\{\afr^{(1)}\bigr\}\uplus\bigl\{\bfr^{(j)}\mid 2\leq j\leq n\bigr\},\\
		F_{i} & = \bigl\{\afr^{(i)}\bigr\}\uplus\bigl\{\bfr^{(j)}\mid 2\leq j\leq n,j\neq i\bigr\},
	\end{align*}
	for $2\leq i\leq n$.  It follows that $\CJC\bigl(\Hoch(n)\bigr)$ is not pure.  
	
	Now, since $\afr^{(j')}\comp\afr^{(j)}$ for $j'\leq j$, each face of $\CJC\bigl(\Hoch(n)\bigr)$ contains at most one vertex of the form $\afr^{(i)}$.  For $i>1$, $\link\bigl(\afr^{(i)}\bigr)$ is thus the $(n-1)$-simplex on the vertices $\bigl\{\bfr^{(2)},\bfr^{(3)},\ldots,\bfr^{(n)}\bigr\}$.  The deletion $\del\bigl(\afr^{(i)}\bigr)$ is the subcomplex of $\CJC\bigl(\Hoch(n)\bigr)$ induced by the vertices 
	\begin{displaymath}
		\bigl\{\afr^{(1)},\ldots,\afr^{(i-1)},\afr^{(i+1)},\ldots,\afr^{(n)},\bfr^{(2)},\bfr^{(3)},\ldots,\bfr^{(n)}\bigr\}.
	\end{displaymath}
	Thus, the facets of $\del\bigl(\afr^{(i)}\bigr)$ are $F_{1}$ and $F_{j}$ for $j\neq i$.  Consequently, for $i>1$, $\afr^{(i)}$ satisfies \eqref{it:vd3}.

	It follows that the deletion $\CJC\bigl(\Hoch(n)\bigr)\setminus\bigl\{\afr^{(2)},\afr^{(3)},\ldots,\afr^{(n)}\bigr\}$ is the $n$-simplex on the vertices $\bigl\{\afr^{(1)},\bfr^{(2)},\bfr^{(3)},\ldots,\bfr^{(n)}\bigr\}$.  Since simplices are vertex decomposable, it follows that all the relevant links and deletions are vertex decomposable.  Thus, for $i>1$, $\afr^{(i)}$ satisfies \eqref{it:vd1} and \eqref{it:vd2}. 
	
	We conclude that $\CJC\bigl(\Hoch(n)\bigr)$ is vertex decomposable.
\end{proof}

\section{The core label order of $\Hoch(n)$}
	\label{sec:hochschild_core_label_order}
\subsection{The core label order of a semidistributive lattice}
	\label{sec:core_label_order}
Let $\Lattice=(L,\leq)$ be a lattice.  For $a\in L$ we define its \defn{nucleus} by
\begin{displaymath}
	a_{\downarrow} \defs a\wedge\bigwedge_{b\in L\colon b\lessdot a}{b}.
\end{displaymath}
In other words, if $a=\least$, then $a_{\downarrow}=\least$, and if $a\neq\least$, then $a_{\downarrow}$ is the meet over all elements covered by $a$.  The \defn{core} of $a$ is the interval $[a_{\downarrow},a]$ in $\Lattice$.  If $\Lattice$ is semidistributive, then we may use the labeling $\jsdlabeling$ from \eqref{eq:jsd_labeling} to define an alternate order on $L$.  The \defn{core label set} of $a$ is
\begin{displaymath}
	\Psi_{\Lattice}(a) \defs \bigl\{\jsdlabeling(b,b')\mid a_{\downarrow}\leq b\lessdot b'\leq a\bigr\}.
\end{displaymath}

\begin{proposition}\label{prop:jsd_core_labeling}
	If $\Lattice$ is semidistributive, then the assignment $a\mapsto\Psi_{\Lattice}(a)$ is injective.
\end{proposition}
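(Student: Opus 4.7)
The plan is to prove the stronger statement that $a = \bigvee\Psi_{\Lattice}(a)$ for every $a\in L$; injectivity is then immediate, because $\Psi_{\Lattice}(a)=\Psi_{\Lattice}(a')$ forces $a = \bigvee\Psi_{\Lattice}(a) = \bigvee\Psi_{\Lattice}(a') = a'$.

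First I would establish the easy inequality $\bigvee\Psi_{\Lattice}(a)\leq a$.  Every element of $\Psi_{\Lattice}(a)$ has the shape $\jsdlabeling(b,b')$ for a cover relation $b\lessdot b'$ with $a_{\downarrow}\leq b\lessdot b'\leq a$.  By the definition in \eqref{eq:jsd_labeling}, $\jsdlabeling(b,b')$ is the minimum $c$ satisfying $b\vee c=b'$, so in particular $\jsdlabeling(b,b')\leq b'\leq a$.  Joining over all such labels yields $\bigvee\Psi_{\Lattice}(a)\leq a$.

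For the reverse inequality I would invoke the canonical join representation.  Since $\Lattice$ is semidistributive it is join semidistributive, so by Theorem~\ref{thm:join_semidistributive_joinreps} the element $a$ admits a canonical join representation, and by Theorem~\ref{thm:joinreps_labels} this representation is $\bigl\{\jsdlabeling(b,a)\mid b\lessdot a\bigr\}$.  By the definition of the nucleus, $a_{\downarrow}\leq b$ for every $b\lessdot a$, so each such cover relation lives in the core $[a_{\downarrow},a]$.  Therefore $\bigl\{\jsdlabeling(b,a)\mid b\lessdot a\bigr\}\subseteq\Psi_{\Lattice}(a)$, and since the canonical join representation joins to $a$, we obtain $a\leq\bigvee\Psi_{\Lattice}(a)$.

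Combining the two inequalities recovers $a$ from $\Psi_{\Lattice}(a)$ as the join of its core label set, which is exactly the desired injectivity.  I do not expect a serious obstacle; the only subtle point is the observation that the canonical join representation of $a$ is automatically contained in $\Psi_{\Lattice}(a)$, but this is forced by the fact that $a_{\downarrow}$ is defined as the meet of precisely those elements covered by $a$.
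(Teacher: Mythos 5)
Your proof is correct, and it takes a genuinely different---and in fact shorter and stronger---route than the paper.  The paper fixes $a,b$ with $\Psi_{\Lattice}(a)=\Psi_{\Lattice}(b)$ and argues by contradiction that $\Can(a)=\Can(b)$, combining perspectivity (Proposition~\ref{prop:jsd_labeling_perspectivity} and Lemma~\ref{lem:perspective_irreducibles}) with the \emph{dual} of \eqref{eq:join_semidistributivity}; injectivity then follows because the canonical join representation determines the element.  You instead establish the identity $a=\bigvee\Psi_{\Lattice}(a)$, which recovers $a$ directly from its core label set.  Both of your inequalities check out: $\jsdlabeling(b,b')\leq b'\leq a$ holds because any $c$ with $b\vee c=b'$ satisfies $c\leq b\vee c=b'$ (this is in effect one half of Lemma~\ref{lem:core_labels}, which the paper only proves afterwards and by a different argument), and $\Can(a)\subseteq\Psi_{\Lattice}(a)$ holds because every lower cover of $a$ lies in the core $[a_{\downarrow},a]$ by the very definition of the nucleus.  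The only point deserving a word is $a=\least$, where $\Psi_{\Lattice}(a)=\emptyset$ and you need the convention $\bigvee\emptyset=\least$; this is harmless.  What your approach buys: it is shorter, it yields the stronger statement that $\Psi_{\Lattice}$ is explicitly invertible via the join operation (not merely injective), and it uses only join semidistributivity, whereas the paper's argument genuinely invokes the meet-semidistributive half of the hypothesis as well.
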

\begin{proof}
	Let $a,b\in L$.  By Theorem~\ref{thm:join_semidistributive_joinreps}, $a$ and $b$ both have a canonical join representation, denoted by $\Can(a)$ and $\Can(b)$, respectively.  By Theorem~\ref{thm:joinreps_labels}, $\Can(a)\subseteq\Psi_{\Lattice}(a)$ and $\Can(b)\subseteq\Psi_{\Lattice}(b)$.
	
	Suppose that $\Psi_{\Lattice}(a)=\Psi_{\Lattice}(b)$.  If $\Psi_{\Lattice}(a)=\emptyset$, then $a=\least=b$.  Otherwise, $\Psi_{\Lattice}(a)\neq\emptyset$ implies $a\neq\least$ and $\Can(a)\neq\emptyset$.  Let $j\in\Can(a)$.  By Theorem~\ref{thm:joinreps_labels}, there exists $a'\in L$ with $a'\lessdot a$ such that $\jsdlabeling(a',a)=j$.
	
	If $j\in\Can(a)\setminus\Can(b)$, then---by assumption---$j\in\Psi_{\Lattice}(a)=\Psi_{\Lattice}(b)$, which implies that there are $b_{1},b_{2}\in L$ with $b_{\downarrow}\leq b_{1}\lessdot b_{2}<b$ and $\jsdlabeling(b_{1},b_{2})=j$.  By Proposition~\ref{prop:jsd_labeling_perspectivity}, $(a',a)\perspective(j_{*},j)$ and $(j_{*},j)\perspective(b_{1},b_{2})$.  By Lemma~\ref{lem:perspective_irreducibles}, $j\wedge a'=j_{*}=j\wedge b_{1}$, and thus, by the dual of \eqref{eq:join_semidistributivity}, $j_{*}=j\wedge(a'\vee b_{1})$.  If $a'\not\leq b_{1}$, then $a'\vee b_{1}\geq a$.  This yields the contradiction $j_{*}=j\wedge (a'\vee b_{1})=j$.  If $a'\leq b_{1}$, then $a'$ is a lower bound for $b_{2}$ and $a$ meaning that $a'$ and $j$ are comparable.  However, since $(a',a)\perspective(j_{*},j)$ we must have $j_{*}\leq a'<j$, which forces $j_{*}=a'$.  But this implies that $a=j$, and thus $\Can(a)=\{j\}=\Psi_{\Lattice}(a)=\Psi_{\Lattice}(b)=\Can(b)$.  This contradicts the choice of $j$.
	
	It follows that $\Can(a)\subseteq\Can(b)$, and symmetrically we obtain $\Can(b)\subseteq\Can(a)$.  Thus, $\Can(a)=\Can(b)$, which implies $a=b$.
\end{proof}

In view of Proposition~\ref{prop:jsd_core_labeling}, we may define a partial order $\sqsubseteq$ on $\Lattice$ by setting $a\sqsubseteq b$ if and only if $\Psi_{\Lattice}(a)\subseteq\Psi_{\Lattice}(b)$.  The poset $\CLO(\Lattice)\defs(L,\sqsubseteq)$ is the \defn{core label order} of $\Lattice$.  Figure~\ref{fig:hochschild_3_clo} shows $\CLO\bigl(\Hoch(3)\bigr)$.

\begin{figure}
	\centering
	\begin{tikzpicture}\small
		\def\x{2};
		\def\y{2};
		\def\s{.95};
		\draw(3*\x,1*\y) node[scale=\s](n1){$(0,0,0)$};
		\draw(1*\x,2*\y) node[scale=\s](n2){$(0,0,2)$};
		\draw(2*\x,2*\y) node[scale=\s](n3){$(1,1,1)$};
		\draw(3*\x,2*\y) node[scale=\s](n4){$(1,0,0)$};
		\draw(4*\x,2*\y) node[scale=\s](n5){$(0,2,0)$};
		\draw(5*\x,2*\y) node[scale=\s](n6){$(1,1,0)$};
		\draw(1*\x,3*\y) node[scale=\s](n7){$(1,0,2)$};
		\draw(2*\x,3*\y) node[scale=\s](n8){$(0,2,2)$};
		\draw(3*\x,3*\y) node[scale=\s](n9){$(1,1,2)$};
		\draw(4*\x,3*\y) node[scale=\s](n10){$(1,2,1)$};
		\draw(5*\x,3*\y) node[scale=\s](n11){$(1,2,0)$};
		\draw(3*\x,4*\y) node[scale=\s](n12){$(1,2,2)$};
		\draw(n1) -- (n2);
		\draw(n1) -- (n3);
		\draw(n1) -- (n4);
		\draw(n1) -- (n5);
		\draw(n1) -- (n6);
		\draw(n2) -- (n8);
		\draw(n2) -- (n7);
		\draw(n2) -- (n9);
		\draw(n3) -- (n9);
		\draw(n3) -- (n10);
		\draw(n4) -- (n7);
		\draw(n4) -- (n11);
		\draw(n5) -- (n8);
		\draw(n5) -- (n10);
		\draw(n5) -- (n11);
		\draw(n6) -- (n9);
		\draw(n6) -- (n11);
		\draw(n7) -- (n12);
		\draw(n8) -- (n12);
		\draw(n9) -- (n12);
		\draw(n10) -- (n12);
		\draw(n11) -- (n12);
	\end{tikzpicture}
	\caption{The lattice $\CLO\bigl(\Hoch(3)\bigr)$.}
	\label{fig:hochschild_3_clo}
\end{figure}

\begin{remark}
	The core label order was first considered under the name ``shard intersection order'' by N.~Reading in the context of posets of regions of hyperplane arrangements; see \cite{reading11noncrossing}.  A lattice-theoretic generalization was investigated in \cite{muehle19the}, and analogous constructions were considered for instance in \cites{bancroft11shard,clifton18canonical,garver18oriented,petersen13on}.
	
	In \cite{muehle19the}, the core label order is defined for a congruence-uniform lattice $\Lattice$ in terms of a labeling by join-irreducible elements; see \cite{muehle19the}*{Section~3.1}.  Lemma~2.6 of \cite{garver18oriented} implies that this labeling is determined by the perspectivity relation.  Hence, it agrees with our labeling $\jsdlabeling$.  The proofs of the results from \cite{muehle19the} that we use in this article depend only on this labeling and therefore extend to semidistributive lattices.  
\end{remark}

The following lemma will be useful.

\begin{lemma}\label{lem:core_labels}
	Let $\Lattice=(L,\leq)$ be a semidistributive lattice, and let $a\in L$ and $j\in\JI(\Lattice)$.  If $j\in\Psi_{\Lattice}(a)$, then $j\leq a$ and $j\not\leq a_{\downarrow}$.
\end{lemma}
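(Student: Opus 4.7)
The plan is to unpack the definition of $\Psi_{\Lattice}(a)$ and exploit the characterization of the label $\jsdlabeling$, either through its definition as a minimum or through the perspectivity characterization of Proposition~\ref{prop:jsd_labeling_perspectivity}. Concretely, if $j\in\Psi_{\Lattice}(a)$ then there must exist a cover relation $(b,b')\in\Covers(\Lattice)$ such that $a_{\downarrow}\leq b\lessdot b'\leq a$ and $\jsdlabeling(b,b')=j$. Everything will be deduced from this single witness.

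First I would establish $j\leq a$. By Proposition~\ref{prop:jsd_labeling_perspectivity} we have $(b,b')\perspective(j_{*},j)$, so Lemma~\ref{lem:perspective_irreducibles} gives $j\leq b'$. Since $b'\leq a$, we conclude $j\leq a$. (Equivalently, one may invoke the definition of $\jsdlabeling(b,b')$ as the minimum $c$ with $b\vee c=b'$, which immediately yields $b\vee j=b'$ and therefore $j\leq b'\leq a$.)

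Next I would establish $j\not\leq a_{\downarrow}$ by contradiction. Assume $j\leq a_{\downarrow}$. Since $a_{\downarrow}\leq b$ we obtain $j\leq b$, hence $b\vee j=b$. But $\jsdlabeling(b,b')=j$ means by definition $b\vee j=b'$, so this forces $b=b'$, contradicting the assumption that $b\lessdot b'$. Therefore $j\not\leq a_{\downarrow}$, as required.

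There is no real obstacle here; the statement is essentially an immediate consequence of the defining properties of $\jsdlabeling$ and the nucleus $a_{\downarrow}$. The only care needed is to cite the correct earlier results (Proposition~\ref{prop:jsd_labeling_perspectivity} and Lemma~\ref{lem:perspective_irreducibles}) so that the argument works in the full generality of semidistributive lattices rather than just in $\Hoch(n)$.
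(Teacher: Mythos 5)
Your proposal is correct and follows essentially the same route as the paper: extract a witness cover relation $(b,b')$ with $a_{\downarrow}\leq b\lessdot b'\leq a$ and $\jsdlabeling(b,b')=j$, deduce $j\leq b'\leq a$ via Proposition~\ref{prop:jsd_labeling_perspectivity} and Lemma~\ref{lem:perspective_irreducibles}, and then rule out $j\leq a_{\downarrow}$ by showing $j\not\leq b$. The only cosmetic difference is that the paper derives $j\not\leq b$ from the perspectivity relation ($j\wedge b=j_{*}<j$), whereas you use the defining property $b\vee j=b'\neq b$; both are immediate.
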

\begin{proof}
	If $j\in\Psi_{\Lattice}(a)$, then there exist $b,b'\in L$ with $a_{\downarrow}\leq b\lessdot b'\leq a$ such that $(b,b')\perspective(j_{*},j)$ by Proposition~\ref{prop:jsd_labeling_perspectivity}.  Lemma~\ref{lem:perspective_irreducibles} implies that $j\leq b'\leq a$ and $j\not\leq b$.  Consequently, $j\not\leq a_{\downarrow}$.
\end{proof}


A semidistributive lattice has the \defn{intersection property} if for every $a,b\in L$ there exists $c\in L$ such that $\Psi_{\Lattice}(a)\cap\Psi_{\Lattice}(b)=\Psi_{\Lattice}(c)$.

\begin{theorem}[\cite{muehle19the}*{Theorem~1.3}]\label{thm:clo_lattice}
	The core label order of a congruence-uniform lattice $\Lattice$ is a lattice if and only if $\Lattice$ is spherical and has the intersection property.
\end{theorem}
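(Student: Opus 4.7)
The plan is to prove both implications by isolating a single auxiliary identity: for every join-irreducible $j \in \JI(\Lattice)$ the core label set is the singleton $\Psi_{\Lattice}(j) = \{j\}$. This follows because $j$ has the unique lower cover $j_{*}$, so $j_{\downarrow} = j_{*}$ and the core interval $[j_{*}, j]$ consists of the single cover $(j_{*}, j)$, whose $\jsdlabeling$-label is $j$ itself by definition. This identity makes each join-irreducible behave as a ``probe'' inside $\CLO(\Lattice)$, since the relation $j \sqsubseteq a$ simplifies to $j \in \Psi_{\Lattice}(a)$.

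For the direction $(\Leftarrow)$, I would first argue that sphericity forces $\grtst_{\downarrow} = \least$. Since congruence-uniform lattices are semidistributive by Theorem~\ref{thm:congruence_uniform_is_semidistributive} and semidistributivity passes to the dual lattice, applying \cite{muehle19the}*{Proposition~2.13} to $\Lattice^{*}$ translates sphericity into the statement that the meet of all coatoms of $\Lattice$ equals $\least$, which is exactly $\grtst_{\downarrow} = \least$. Then for each $j \in \JI(\Lattice)$ the cover $(j_{*}, j)$ lies inside $[\grtst_{\downarrow}, \grtst] = \Lattice$, giving $j \in \Psi_{\Lattice}(\grtst)$; combined with Lemma~\ref{lem:core_labels} this yields $\Psi_{\Lattice}(\grtst) = \JI(\Lattice)$, so $\grtst$ is the top of $\CLO(\Lattice)$. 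The intersection property produces binary meets in $\CLO(\Lattice)$ directly, and together with this top and the bottom $\least$ (for which $\Psi_{\Lattice}(\least) = \emptyset$ trivially) this suffices for the finite poset $\CLO(\Lattice)$ to be a lattice.

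For the direction $(\Rightarrow)$, assume $\CLO(\Lattice)$ is a lattice with top $T$. The auxiliary identity gives $j \sqsubseteq T$ for every $j \in \JI(\Lattice)$, so $\JI(\Lattice) \subseteq \Psi_{\Lattice}(T)$; Lemma~\ref{lem:core_labels} then forces every join-irreducible to lie below $T$ in $\Lattice$, and since $\grtst$ is the join of all join-irreducibles we conclude $T = \grtst$ and $\Psi_{\Lattice}(\grtst) = \JI(\Lattice)$. A second application of Lemma~\ref{lem:core_labels} rules out any join-irreducible below $\grtst_{\downarrow}$, so $\grtst_{\downarrow} = \least$ and sphericity follows via the same duality argument. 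For the intersection property, let $a, b \in L$ and set $c = a \wedge_{\CLO} b$; by definition $\Psi_{\Lattice}(c) \subseteq \Psi_{\Lattice}(a) \cap \Psi_{\Lattice}(b)$, while for any $j$ in this intersection, $\{j\} = \Psi_{\Lattice}(j)$ is contained in both $\Psi_{\Lattice}(a)$ and $\Psi_{\Lattice}(b)$, forcing $j \sqsubseteq a$, $j \sqsubseteq b$, hence $j \sqsubseteq c$, hence $j \in \Psi_{\Lattice}(c)$.

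The main obstacle I expect is recognizing that the top element of $\CLO(\Lattice)$ must coincide with $\grtst$ carrying the maximal possible label set $\JI(\Lattice)$, and that this configuration is equivalent to sphericity. Once that bridge is built via Lemma~\ref{lem:core_labels} and the dualized form of \cite{muehle19the}*{Proposition~2.13}, both implications reduce to essentially formal consequences of the identity $\Psi_{\Lattice}(j) = \{j\}$.
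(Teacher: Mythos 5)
Your argument is correct, but the paper itself offers no proof of this statement: it is quoted verbatim from \cite{muehle19the}*{Theorem~1.3}, so the only comparison available inside the paper is with a citation. Your proof is a clean, self-contained derivation from the ingredients the present paper does supply. The load-bearing observation is the identity $\Psi_{\Lattice}(j)=\{j\}$ for $j\in\JI(\Lattice)$, which turns both halves of the equivalence into near-formal manipulations: the intersection property plus the existence of a maximum in $\CLO(\Lattice)$ yields all binary meets in a finite poset with a top (hence a lattice), and conversely the meet of $a$ and $b$ in $\CLO(\Lattice)$ must carry exactly the label set $\Psi_{\Lattice}(a)\cap\Psi_{\Lattice}(b)$, because every label in that intersection is itself an element of the lattice lying weakly below both. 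Two small points deserve to be made explicit if you write this up. First, $\jsdlabeling(j_{*},j)=j$ is not literally ``by definition'': one checks that any $c$ with $j_{*}\vee c=j$ satisfies $c\leq j$ and $c\not\leq j_{*}$, hence $c=j$ by join-irreducibility (equivalently, invoke Proposition~\ref{prop:jsd_labeling_perspectivity} with the trivially perspective pair $(j_{*},j)\perspective(j_{*},j)$). Second, the translation of sphericity into $\grtst_{\downarrow}=\least$ rests on applying \cite{muehle19the}*{Proposition~2.13} to the dual lattice; this is legitimate because semidistributivity and the M{\"o}bius invariant are self-dual, but it should be said, and in the forward direction you also need the standard fact that an element of a finite lattice with no join-irreducible below it equals $\least$. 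With those two details spelled out, your proof stands on its own and could replace the citation.
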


\subsection{$\Hoch(n)$ has the intersection property}
	\label{sec:hochschild_intersection_property}
In this section, we investigate the core label order of $\Hoch(n)$, which is well defined by Theorems~\ref{thm:hochschild_properties} and \ref{thm:congruence_uniform_is_semidistributive}.  The key result is the following explicit description of the core label sets in $\Hoch(n)$.

\begin{proposition}\label{prop:hochschild_core_labels}
	The nucleus of $\ufr\in\Tri(n)$ is $\ufr_{\downarrow}=({u_{\downarrow}}_{1},{u_{\downarrow}}_{2},\ldots,{u_{\downarrow}}_{n})$ given by
	\begin{equation}\label{eq:hochschild_nucleus}
		{u_{\downarrow}}_{i} = \begin{cases}
			u_{i}-1 & \text{if either}\;i=l_{1}(\ufr),\;\text{or}\;i<l_{1}(\ufr)\;\text{and}\;u_{i}=2,\\
			0 & \text{if}\;i>l_{1}(\ufr)\;\text{and}\;u_{i}=2,\\
			u_{i} & \text{otherwise}.
		\end{cases}
	\end{equation}
	The core label set of $\ufr$ is
	\begin{equation}\label{eq:hochschild_corelabel}
		\Psi(\ufr) = \Bigl\{\afr^{(i)}\mid l_{1}(\ufr)>0\;\text{and}\;l_{1}(\ufr)\leq i<f_{0}(\ufr)\Bigr\} \uplus \Bigl\{\bfr^{(i)}\mid i\in[n], u_{i}=2\Bigr\}.
	\end{equation}\end{proposition}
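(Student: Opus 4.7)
The plan is to derive both assertions by direct computation, leveraging Combe's explicit meet description in $\Hoch(n)$ together with Proposition~\ref{prop:hochschild_perspective_labels}.

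For the nucleus formula \eqref{eq:hochschild_nucleus}, I would start from the enumeration of covers of $\ufr$ extracted in the proof of Proposition~\ref{prop:hochschild_canonical_joinrep}: the tuple $\mfr$ (present exactly when $l_{1}(\ufr)>0$), obtained by decreasing the $l_{1}(\ufr)\th$ coordinate of $\ufr$ by $1$; the tuples $\vfr^{(i)}$ for each $i$ with $u_{i}=2$ and $i<f_{0}(\ufr)$, obtained by decreasing coordinate $i$ by $1$; and the tuples $\wfr^{(i)}$ for each $i$ with $u_{i}=2$ and $i>f_{0}(\ufr)$, obtained by decreasing coordinate $i$ by $2$. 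Since the meet in $\Hoch(n)$ is the componentwise minimum followed by the correction that flips the $1$ in every resulting $01$-pattern to $0$, I would perform a position-by-position case analysis (splitting on whether $u_{k}=0$, $u_{k}=1$ with $k=l_{1}(\ufr)$ or $k<l_{1}(\ufr)$, or $u_{k}=2$ with $k<f_{0}(\ufr)$ or $k>f_{0}(\ufr)$). The componentwise minimum turns out to carry a $0$ at position $l_{1}(\ufr)$ together with fresh $1$s at each position $k$ with $l_{1}(\ufr)<k<f_{0}(\ufr)$ (where necessarily $u_{k}=2$); these fresh $1$s constitute the only $01$-patterns, and flipping them to $0$ yields exactly \eqref{eq:hochschild_nucleus}.

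For the core label set formula \eqref{eq:hochschild_corelabel}, I would combine Proposition~\ref{prop:hochschild_perspective_labels} with the nucleus formula just obtained. A label $\bfr^{(i)}$ appears in the core label set iff some cover $(\vfr,\wfr)$ inside $[\ufr_{\downarrow},\ufr]$ satisfies $w_{i}=2$ and $v_{i}<2$; this forces $u_{i}=2$, and conversely when $u_{i}=2$ the cover $(\vfr^{(i)},\ufr)$ or $(\wfr^{(i)},\ufr)$ witnesses membership. A label $\afr^{(i)}$ appears iff some cover satisfies $v_{i}=0$ and $w_{i}=1$. The inequality $v_{i}\geq(\ufr_{\downarrow})_{i}$ together with \eqref{eq:hochschild_nucleus} forces $l_{1}(\ufr)\leq i$ (since for $i<l_{1}(\ufr)$ with $u_{i}\in\{1,2\}$ one has $(\ufr_{\downarrow})_{i}=1$), while the triword condition on $\wfr$ together with $\wfr\leq\ufr$ and \ref{it:tri3} forces $i<f_{0}(\ufr)$, because any zero that $\ufr$ carries before position $i$ is inherited by $\wfr$ and would collide with $w_{i}=1$. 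Conversely, for each $i$ with $l_{1}(\ufr)\leq i<f_{0}(\ufr)$, an explicit cover inside the interval is obtained by modifying only coordinate $i$ (setting it to $0$ in $\vfr$ and to $1$ in $\wfr$); one verifies in a line that both tuples are triwords and that $\vfr\geq\ufr_{\downarrow}$.

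The principal obstacle is keeping the $01$-pattern bookkeeping straight on both sides: once when showing that the componentwise minimum of the covers of $\ufr$ introduces a $01$-pattern precisely in the range $l_{1}(\ufr)<k<f_{0}(\ufr)$, and once when verifying that the candidate witness covers for each $\afr^{(i)}$ and $\bfr^{(i)}$ are valid triwords lying inside $[\ufr_{\downarrow},\ufr]$. A systematic case split on the relative positions of $l_{1}(\ufr)$, $f_{0}(\ufr)$, and $i$ resolves both.
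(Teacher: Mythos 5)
Your proposal is correct and follows essentially the same route as the paper: compute the componentwise minimum of the covers of $\ufr$ from the proof of Proposition~\ref{prop:hochschild_canonical_joinrep}, apply the $01$-pattern correction to get the nucleus, and then use the label characterization of Proposition~\ref{prop:hochschild_perspective_labels} together with the nucleus formula to decide which $\afr^{(i)}$ and $\bfr^{(i)}$ occur in $\Psi(\ufr)$. The only cosmetic difference is in the witnesses for $\afr^{(i)}$ with $l_{1}(\ufr)<i<f_{0}(\ufr)$: you modify $\ufr$ in the single coordinate $i$, while the paper builds the saturated chain $\ufr_{\downarrow}\lessdot\mfr^{(0)}\lessdot\mfr^{(1)}\lessdot\cdots$ of join-irreducibles $\afr^{(j)}$; both verifications go through.
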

\begin{proof}
	Throughout this proof we write $l_{1}$ instead of $l_{1}(\ufr)$ and $f_{0}$ instead of $f_{0}(\ufr)$.

	Let $\mfr,\vfr^{(i)},\wfr^{(i)}$ be the elements covered by $\ufr$ as constructed in the proof of Proposition~\ref{prop:hochschild_canonical_joinrep}, and let $\tilde{\ufr}$ be the componentwise minimum of all these elements.  In other words, $\tilde{\ufr}$ is obtained by subtracting $1$ from the last $1$ in $\ufr$ as well as from every $2$ in $\ufr$ occurring before the first $0$, and by subtracting $2$ from every $2$ in $\ufr$ occurring after the first $0$.  Now, if $l_{1}>0$, then for every $i\in\bigl\{l_{1}{+}1,l_{1}{+}2,\ldots,f_{0}{-}1\bigr\}$ with $u_{i}=2$, the pair $\bigl(l_{1},i\bigr)$ is a $01$-pattern in $\tilde{\ufr}$.  Thus, every $1$ occurring in such a position in $\tilde{\ufr}$ must be turned into a $0$ in order to satisfy \eqref{it:tri3}.  The resulting element is the nucleus of $\ufr$ (by definition of the meet in $\Hoch(n)$) and matches the description in \eqref{eq:hochschild_nucleus}.

	The fact that $\afr^{(l_{1})}\in\Psi(\ufr)$ and $\bfr^{(i)}\in\Psi(\ufr)$ for $i\in[n]$ such that $u_{i}=2$ follows directly from Proposition~\ref{prop:hochschild_canonical_joinrep}, since these elements constitute $\Can(\ufr)$ and $\Can(\ufr)\subseteq\Psi(\ufr)$ by Theorem~\ref{thm:joinreps_labels}.
	
	By construction, $\mfr^{(0)}=\ufr_{\downarrow}\vee\afr^{(l_{1})}$ satisfies $\ufr_{\downarrow}\compless\mfr^{(0)}\comp\ufr$.  For $i\in[f_{0}{-}l_{1}{-}1]$, we define $\mfr^{(i)}=\mfr^{(i-1)}\vee\afr^{(l_{1}+i)}$.  It follows that $\mfr^{(i-1)}\compless\mfr^{(i)}\comp\ufr$ and thus $\jsdlabeling(\mfr^{(i-1)},\mfr^{(i)})=\afr^{(l_{1}+i)}$ by Proposition~\ref{prop:hochschild_perspective_labels}.  This implies $\afr^{(l_{1}+i)}\in\Psi(\ufr)$.  See Figure~\ref{fig:core_label_sets_illustration}.

	If $i\in[n]$ is such that $u_{i}\neq 2$, then $\bfr^{(i)}\not\comp\ufr$, and thus $\bfr^{(i)}\notin\Psi(\ufr)$ by Lemma~\ref{lem:core_labels}.
	
	If $i<l_{1}$, then $u_{i}\neq 0$ by \eqref{it:tri3}, and ${u_{\downarrow}}_{i}=1$ by \eqref{eq:hochschild_nucleus}.  Consequently, $\afr^{(i)}\comp\ufr_{\downarrow}$ and thus $\afr^{(i)}\notin\Psi(\ufr)$ by Lemma~\ref{lem:core_labels}.

	If $i>f_{0}$, then $u_{i}=0$ by \eqref{it:tri3}.  Thus $\afr^{(i)}\not\comp\ufr$, and thus $\afr^{(i)}\notin\Psi(\ufr)$ by Lemma~\ref{lem:core_labels}.
\end{proof}

\begin{corollary}\label{cor:hochschild_nucleus}
	For $n>0$ and $\ufr\in\Hoch(n)$, $\ufr_{\downarrow}\in\bigl\{\ofr,\afr^{(1)},\afr^{(2)},\ldots,\afr^{(n)}\bigr\}$.
\end{corollary}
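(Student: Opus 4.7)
The plan is to read off $\ufr_{\downarrow}$ directly from the explicit formula in Proposition~\ref{prop:hochschild_core_labels} and split on the value of $l\defs l_{1}(\ufr)$. In each case, the three clauses of \eqref{eq:hochschild_nucleus}, combined with the triword axioms \eqref{it:tri2} and \eqref{it:tri3}, pin down every coordinate of $\ufr_{\downarrow}$, so the argument reduces to a short case analysis with no real obstacle.

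First I would handle $l=0$. Then $\ufr$ contains no $1$; by \eqref{it:tri2} we have $u_{1}\neq 2$, hence $u_{1}=0$, and then \eqref{it:tri3} forces $u_{j}\in\{0,2\}$ for all $j$. Under \eqref{eq:hochschild_nucleus}, every entry equal to $2$ is set to $0$ (second clause, since the inequality $i>l_{1}=0$ is vacuously satisfied), while entries equal to $0$ are unchanged (third clause). Thus $\ufr_{\downarrow}=\ofr$.

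Next I would treat $l\geq 1$ and show that ${u_{\downarrow}}_{i}=1$ for $i<l$, ${u_{\downarrow}}_{l}=0$, and ${u_{\downarrow}}_{i}=0$ for $i>l$. For $i<l$, \eqref{it:tri3} together with $u_{l}=1$ excludes $u_{i}=0$, so $u_{i}\in\{1,2\}$; the third clause covers $u_{i}=1$ and the first clause covers $u_{i}=2$, both giving ${u_{\downarrow}}_{i}=1$. For $i=l$, the first clause applied to $u_{l}=1$ gives ${u_{\downarrow}}_{l}=0$. For $i>l$, maximality of $l$ excludes $u_{i}=1$, so $u_{i}\in\{0,2\}$; the second clause handles $u_{i}=2$ and the third handles $u_{i}=0$, both yielding ${u_{\downarrow}}_{i}=0$. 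Consequently $\ufr_{\downarrow}$ is the tuple with $l-1$ ones followed by $n-l+1$ zeros.

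Putting the cases together, $\ufr_{\downarrow}=\afr^{(l-1)}$ whenever $l\geq 2$ and $\ufr_{\downarrow}=\ofr$ whenever $l\in\{0,1\}$; in particular $\ufr_{\downarrow}\in\bigl\{\ofr,\afr^{(1)},\ldots,\afr^{(n-1)}\bigr\}$, which is contained in the set claimed. The only subtlety is the boundary $l=1$, where the block of ones preceding position $l$ is empty and the nucleus collapses to $\ofr$ rather than to a nonzero $\afr^{(i)}$; this is why the list in the statement must include $\ofr$ alongside the $\afr^{(i)}$'s.
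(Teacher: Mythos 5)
Your proof is correct and follows essentially the same route as the paper: both arguments simply read off the nucleus from the explicit formula \eqref{eq:hochschild_nucleus} of Proposition~\ref{prop:hochschild_core_labels} (the paper observes that $\ufr_{\downarrow}$ contains no $2$ and invokes \eqref{it:tri3}, while you determine every coordinate directly). Your case analysis is a bit more explicit and even yields the sharper statement $\ufr_{\downarrow}=\afr^{(l_{1}(\ufr)-1)}$ for $l_{1}(\ufr)\geq 2$ and $\ufr_{\downarrow}=\ofr$ otherwise, so in particular $\afr^{(n)}$ never occurs.
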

\begin{proof}
	By \eqref{eq:hochschild_nucleus}, $\ufr_{\downarrow}$ does not contain a $2$.  The claim then follows from \eqref{it:tri3}.
\end{proof}

\begin{figure}
	\centering
	\begin{tikzpicture}\small
		\def\x{2.5};
		\def\y{1};
		\def\s{.6};
		\draw(3*\x,1*\y) node[scale=\s](uu){$\ufr_{\downarrow}=(1,1,1,0,0,0,0,0,0)$};
		\draw(2.5*\x,2*\y) node[scale=\s](m0){$\mfr^{(0)}=(1,1,1,1,0,0,0,0,0)$};
		\draw(2*\x,3*\y) node[scale=\s](m1){$\mfr^{(1)}=(1,1,1,1,1,0,0,0,0)$};
		\draw(1.5*\x,4*\y) node[scale=\s](m2){$\mfr^{(2)}=(1,1,1,1,1,1,0,0,0)$};
		\draw(1*\x,7*\y) node[scale=\s](v2){$\vfr^{(2)}=(1,1,1,1,2,2,0,2,0)$};
		\draw(2*\x,7*\y) node[scale=\s](v5){$\vfr^{(5)}=(1,2,1,1,1,2,0,2,0)$};
		\draw(3*\x,7*\y) node[scale=\s](v6){$\vfr^{(6)}=(1,2,1,1,2,1,0,2,0)$};
		\draw(4*\x,7*\y) node[scale=\s](w8){$\wfr^{(8)}=(1,2,1,1,2,2,0,0,0)$};
		\draw(5*\x,7*\y) node[scale=\s](m){$\mfr=(1,2,1,0,2,2,0,2,0)$};
		\draw(3*\x,8*\y) node[scale=\s](u){$\ufr=(1,2,1,1,2,2,0,2,0)$};
		\draw(v2) -- (u) node[fill=white,text=gray!50!red,inner sep=.5pt,scale=.5] at(2*\x,7.5*\y) {$\bfr^{(2)}$};
		\draw(v5) -- (u) node[fill=white,text=gray!50!red,inner sep=.5pt,scale=.5] at(2.5*\x,7.5*\y) {$\bfr^{(5)}$};
		\draw(v6) -- (u) node[fill=white,text=gray!50!red,inner sep=.5pt,scale=.5] at(3*\x,7.5*\y) {$\bfr^{(6)}$};
		\draw(w8) -- (u) node[fill=white,text=gray!50!red,inner sep=.5pt,scale=.5] at(3.5*\x,7.5*\y) {$\bfr^{(8)}$};
		\draw(m) -- (u) node[fill=white,text=gray!50!red,inner sep=.5pt,scale=.5] at(4*\x,7.5*\y) {$\afr^{(4)}$};
		\draw[dashed](m2) -- (v2);
		\draw[dashed](m2) -- (v5);
		\draw[dashed](m2) -- (v6);
		\draw[dashed](m2) -- (w8);
		\draw(uu) -- (m0) node[fill=white,text=gray!50!red,inner sep=.5pt,scale=.5] at(2.75*\x,1.5*\y) {$\afr^{(4)}$};
		\draw(m0) -- (m1) node[fill=white,text=gray!50!red,inner sep=.5pt,scale=.5] at(2.25*\x,2.5*\y) {$\afr^{(5)}$};
		\draw(m1) -- (m2) node[fill=white,text=gray!50!red,inner sep=.5pt,scale=.5] at(1.75*\x,3.5*\y) {$\afr^{(6)}$};
		\draw[dashed](uu) -- (m);
	\end{tikzpicture}
	\caption{Illustration of the construction of the core label sets in $\Hoch(n)$.  Solid lines indicate cover relations, dashed lines indicate comparability relations.}
	\label{fig:core_label_sets_illustration}
\end{figure}
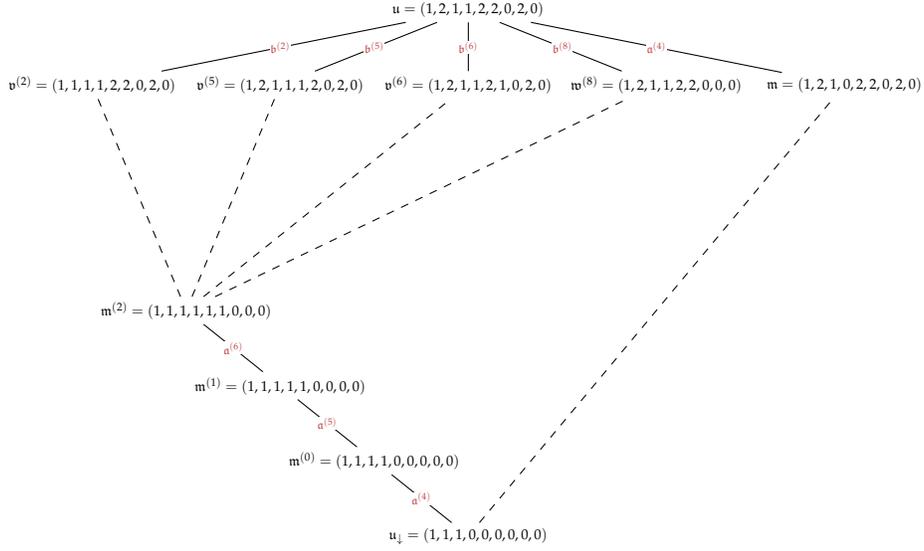

Note that we can recover $\ufr$ from $\Psi(\ufr)$.  For each $\bfr^{(i)}\in\Psi(\ufr)$, we insert a $2$ into position $i$ of an integer tuple of length $n$.  Then, the element $\afr^{(i)}\in\Psi(\ufr)$ with $i$ minimal reveals that the last $1$ must occur in position $i$.  By \eqref{it:tri3} all unfilled positions before $i$ must also contain a $1$.  All the remaining unoccupied positions must be filled with $0$s.

\begin{theorem}\label{thm:hochschild_clo_lattice}
	For $n>0$, the poset $\CLO\bigl(\Hoch(n)\bigr)$ is a lattice.
\end{theorem}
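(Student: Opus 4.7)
The plan is to invoke Theorem~\ref{thm:clo_lattice}. Since $\Hoch(n)$ is congruence uniform by Theorem~\ref{thm:hochschild_properties} and spherical by Corollary~\ref{cor:hochschild_spherical}, the only remaining task is to verify the intersection property, i.e., that for any $\ufr,\vfr\in\Tri(n)$ there exists $\wfr\in\Tri(n)$ with $\Psi(\wfr)=\Psi(\ufr)\cap\Psi(\vfr)$.  Using Proposition~\ref{prop:hochschild_core_labels}, I would first rewrite this intersection in terms of three pieces of data: $L\defs\max\{l_{1}(\ufr),l_{1}(\vfr)\}$, $F\defs\min\{f_{0}(\ufr),f_{0}(\vfr)\}$, and $S\defs\{i\in[n]:u_{i}=v_{i}=2\}$.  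Evidently the $\bfr$-part of $\Psi(\ufr)\cap\Psi(\vfr)$ is $\{\bfr^{(i)}:i\in S\}$; if both $l_{1}(\ufr),l_{1}(\vfr)$ are positive, the $\afr$-part is $\{\afr^{(i)}:L\leq i<F\}$, and otherwise it is empty.

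The key step is to guess $\wfr$ from these data.  In the principal case, where $l_{1}(\ufr),l_{1}(\vfr)>0$ and $L<F$, I propose
\begin{displaymath}
	w_{i}\defs\begin{cases}2,&\text{if}\;i\in S,\\ 1,&\text{if}\;i\leq L\;\text{and}\;i\notin S,\\ 0,&\text{otherwise},\end{cases}
\end{displaymath}
and verify that $\wfr\in\Tri(n)$ with $l_{1}(\wfr)=L$, $f_{0}(\wfr)=F$, and $\{i:w_{i}=2\}=S$.  The crucial local observation that makes this work is that for every $i$ with $L<i<F$ one must have $u_{i}=v_{i}=2$: indeed $i>L\geq l_{1}(\ufr)$ and $i<F\leq f_{0}(\ufr)$ force $u_{i}\notin\{0,1\}$, and symmetrically for $\vfr$.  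This observation guarantees that $\wfr$ contains no $1$ strictly after position $L$ and no $0$ strictly before position $F$, so that $l_{1}(\wfr)=L$ and $f_{0}(\wfr)=F$; condition \eqref{it:tri2} applied to $\ufr$ excludes $1\in S$, hence also rules out any $01$-pattern and verifies \eqref{it:tri3}.  Applying Proposition~\ref{prop:hochschild_core_labels} to $\wfr$ then returns exactly the desired intersection.

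In the remaining degenerate case, where the $\afr$-part of the intersection is empty (either because one of $l_{1}(\ufr),l_{1}(\vfr)$ vanishes, or because $L\geq F$), I would simply set $w_{i}\defs 2$ for $i\in S$ and $w_{i}\defs 0$ otherwise.  Again \eqref{it:tri2} forces $1\notin S$, so $\wfr$ is a valid triword with $l_{1}(\wfr)=0$, and Proposition~\ref{prop:hochschild_core_labels} gives $\Psi(\wfr)=\{\bfr^{(i)}:i\in S\}$, matching the intersection.  The main obstacle is to convince oneself in the principal case that a single triword can simultaneously encode both the $\afr$-interval $[L,F)$ and the $\bfr$-positions $S$; the key point is precisely that the indices strictly between $L$ and $F$ are automatically forced to carry a $2$ in both $\ufr$ and $\vfr$, which is what prevents any conflict between the desired last-$1$ position $L$ and first-$0$ position $F$.
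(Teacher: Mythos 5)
Your proof is correct and follows essentially the same route as the paper: reduce to the intersection property via Theorem~\ref{thm:clo_lattice} and Corollary~\ref{cor:hochschild_spherical}, compute $\Psi(\ufr)\cap\Psi(\vfr)$ using Proposition~\ref{prop:hochschild_core_labels}, and exhibit a triword realizing that set. The only difference is that you spell out the compatibility check (every index strictly between $L$ and $F$ is forced to carry a $2$ in both $\ufr$ and $\vfr$) that the paper leaves implicit when it asserts that the intersected data is ``of the form stated in \eqref{eq:hochschild_corelabel}''.
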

\begin{proof}
	Let $\ufr,\vfr\in\Tri(n)$, and let $U=\bigr\{i\in[n]\mid u_{i}=2\bigl\}$ and $V=\bigr\{i\in[n]\mid v_{i}=2\bigr\}$.  Consider the sets $M=U\cap V$ and
	\begin{displaymath}
		A = \Bigl\{l_{1}(\ufr),l_{1}(\ufr){+}1,\ldots,f_{0}(\ufr){-}1\Bigr\} \cap \Bigl\{l_{1}(\vfr),l_{1}(\vfr){+}1,\ldots,f_{0}(\vfr){-}1\Bigr\}.
	\end{displaymath}
	By construction, $A$ is either empty or of the form $\{i,i{+}1,\ldots,j{-}1\}$ for some $1\leq i<j\leq n$.  In particular, the set 
	\begin{displaymath}
		P = \bigl\{\afr^{(i)}\mid i\in A\bigr\}\uplus\bigl\{\bfr^{(i)}\mid i\in M\bigr\}
	\end{displaymath}
	is of the form stated in \eqref{eq:hochschild_corelabel}.  As explained after the proof of Corollary~\ref{cor:hochschild_nucleus}, we may find $\mfr\in\Tri(n)$ with $\Psi(\mfr)=P$.  
	
	This means, by definition, that $\Hoch(n)$ has the intersection property.  By Corollary~\ref{cor:hochschild_spherical}, $\Hoch(n)$ is spherical.  Then, Theorem~\ref{thm:clo_lattice} implies that $\CLO\bigl(\Hoch(n)\bigr)$ is a lattice.
\end{proof}

\subsection{Triwords and shuffles}
	\label{sec:triwords_shuffles}
We now give a combinatorial interpretation of the core label order of $\Hoch(n)$.  For integers $k,l\geq 0$, let $A=\{a_{1},a_{2},\ldots,a_{k}\}$ and $B=\{b_{1},b_{2},\ldots,b_{l}\}$ be two (disjoint) sets.  Let $\mathcal{A}=A\uplus B$ be the disjoint union of $A$ and $B$, and let $\mathcal{A}^{*}$ denote the set of words over the alphabet $\mathcal{A}$.  The empty word is denoted by $\nil$.  Let $\mathbf{u},\mathbf{v}\in\mathcal{A}^{*}$ with $\mathbf{u}=u_{1}u_{2}\cdots u_{r}$ and $\mathbf{v}=v_{1}v_{2}\cdots v_{s}$.  Then, $\mathbf{u}$ is a \defn{subword} of $\mathbf{v}$ if $r\leq s$ and there exists a sequence $1\leq i_{1}<i_{2}<\cdots<i_{r}\leq s$ such that $u_{j}=v_{i_{j}}$ for all $j\in[r]$.  

For $\mathbf{u}\in\mathcal{A}^{*}$, let $\overline{\mathbf{u}}$ denote the set of letters occurring in $\mathbf{u}$.  For $\mathbf{v}\in\mathcal{A}^{*}$, let $\mathbf{v}|\mathbf{u}$ denote the subword of $\mathbf{v}$ obtained by restricting $\mathbf{v}$ to the letters in $\overline{\mathbf{u}}$.  

Let $\wb_{A}\defs a_{1}a_{2}\cdots a_{k}$ and $\wb_{B}\defs b_{1}b_{2}\cdots b_{l}$ be two elements of $\mathcal{A}^{*}$.  We denote by $\Shuffle(A,B)$ the set of all $\wb\in\mathcal{A}^{*}$ such that $\overline{\wb}\subseteq\overline{\wb}_{A}\uplus\overline{\wb}_{B}$ and $\wb|\wb_{A}$ is a subword of $\wb_{A}$ and $\wb|\wb_{B}$ is a subword of $\wb_{B}$.  In other words, $\Shuffle(A,B)$ is the set of all \defn{shuffles} of subwords of $\wb_{A}$ and $\wb_{B}$.  

\begin{example}
	Let $A=\{a_{1},a_{2}\}$ and $B=\{b_{1}\}$.  Then,
	\begin{displaymath}
		\Shuffle(A,B) = \bigl\{\nil,a_{1},a_{2},b_{1},a_{1}a_{2},a_{1}b_{1},a_{2}b_{1},b_{1}a_{1},b_{1}a_{2},a_{1}a_{2}b_{1},a_{1}b_{1}a_{2},b_{1}a_{1}a_{2}\bigr\}.
	\end{displaymath}
	Note that $a_{2}a_{1}\notin\Shuffle(A,B)$ because it is not a subword of $\wb_{A}=a_{1}a_{2}$.
\end{example}

Clearly, $\Shuffle(A,B)$ does only depend on the size (rather than on the elements) of $A$ and $B$.  We may therefore simply write $\Shuffle(k,l)$ instead.  Following \cite{greene88posets}, we may order $\Shuffle(A,B)$ by setting $\wb_{1}\preceq\wb_{2}$ if and only if $\wb_{2}$ can be obtained from $\wb_{1}$ by removing letters of $A$ or adding letters of $B$.  The poset $\ShufflePoset(k,l)\defs\bigl(\Shuffle(k,l),\preceq\bigr)$ is in fact a lattice; the \defn{shuffle lattice}~\cite{greene88posets}*{Theorem~2.1}.

In this article, we mainly consider the shuffle lattices $\ShufflePoset(n-1,1)$, and we use the sets $A=\{2,3,\ldots,n\}$ and $B=\{\one\}$ for their construction.  Figure~\ref{fig:shuffle_21} shows $\ShufflePoset(2,1)$.

\begin{theorem}[\cite{greene88posets}*{Theorem~3.4}]\label{thm:shuffle_poset_invariants}
	Let $n>0$.  The lattice $\ShufflePoset(n-1,1)$ has $\frac{(n+1)!}{2}$ maximal chains.  Its zeta polynomial is 
	\begin{displaymath}
		\ZetaPol_{\ShufflePoset(n-1,1)}(q) = \frac{q^{n-1}}{2}\bigl((n+1)q-n+1\bigr),
	\end{displaymath}
	and its M{\"o}bius invariant is $\mu\bigl(\ShufflePoset(n-1,1)\bigr)=(-1)^{n}n$.
\end{theorem}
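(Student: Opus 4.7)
The plan is to parameterise elements of $\ShufflePoset(n-1,1)$ explicitly and reduce all three assertions to a direct enumeration.  Writing $A=\{2,3,\ldots,n\}$ and $B=\{\one\}$ as in the excerpt, every element of $\Shuffle(n-1,1)$ can be encoded as a pair $(S,p)$ with $S\subseteq A$ recording the $A$-letters that appear and $p\in\{*\}\cup\{0,1,\ldots,|S|\}$ indicating whether $\one$ is absent ($p=*$) or positioned with $p$ elements of $S$ to its left.  The bottom element is $(A,*)$, the top element is $(\emptyset,0)$, and the cover relations split into two kinds: either delete one letter $a\in S$ (shifting $p$ down by one if $a$ lies to the left of $\one$ and leaving $p$ unchanged otherwise), or insert $\one$ into an element with $p=*$ at one of the $|S|+1$ available slots.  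A short check shows that $\ShufflePoset(n-1,1)$ is graded of rank $n$.

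For the maximal-chain count, any chain from $(A,*)$ to $(\emptyset,0)$ consists of $n-1$ deletions together with exactly one insertion, performed in some order.  I would choose (i) the permutation in which the $n-1$ letters of $A$ are deleted, giving $(n-1)!$ orderings; (ii) the step $k\in\{1,2,\ldots,n\}$ at which $\one$ is inserted; and (iii) the slot into which $\one$ is placed at that step (after $k-1$ deletions the word has length $n-k$, so there are $n-k+1$ available slots).  Summing yields $(n-1)!\sum_{k=1}^{n}(n-k+1)=(n-1)!\cdot\binom{n+1}{2}=\frac{(n+1)!}{2}$.

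For the zeta polynomial I would enumerate $(q-1)$-multichains $a_{1}\leq a_{2}\leq\cdots\leq a_{q-1}$ with $a_{t}=(S_{t},p_{t})$ by separating on whether $\one$ ever appears.  Each letter $i\in A$ belongs to an initial segment of the time indices, so the sequence $S_{1}\supseteq S_{2}\supseteq\cdots\supseteq S_{q-1}$ is determined by assigning to each $i$ a removal time $r_{i}\in\{1,2,\ldots,q\}$, producing $q^{n-1}$ chains of subsets.  If $\one$ never appears this already accounts for $q^{n-1}$ multichains.  Otherwise $\one$ first appears at some step $k\in[q-1]$; then the position $p_{k}\in\{0,1,\ldots,|S_{k}|\}$ can be chosen freely, and the subsequent positions $p_{k+1},\ldots,p_{q-1}$ are forced by the cover description.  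The contribution of this case is $\sum_{k=1}^{q-1}\sum_{(r_{i})}(|S_{k}|+1)$; since $\sum_{(r_{i})}|S_{k}|=(n-1)(q-k)q^{n-2}$ by independence of the removal times, a direct summation collapses this to $q^{n-1}(q-1)\cdot\tfrac{n+1}{2}$.  Adding the two cases produces $q^{n-1}\bigl(1+(q-1)\tfrac{n+1}{2}\bigr)=\frac{q^{n-1}}{2}\bigl((n+1)q-n+1\bigr)$.

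The M\"obius invariant then drops out of Rota's identity $\mu(\Poset)=\ZetaPol_{\Poset}(-1)$: substituting $q=-1$ into the formula just obtained yields $\tfrac{(-1)^{n-1}}{2}\bigl(-(n+1)-n+1\bigr)=(-1)^{n}n$.  The main obstacle in this plan is the bookkeeping inside the multichain count: I would have to verify carefully that once the pair $(k,p_{k})$ is fixed, the rule ``$p_{t+1}$ equals the number of elements of $S_{t+1}$ among the first $p_{t}$ elements of $S_{t}$'' really produces a consistent sequence of positions, and conversely that every multichain arises this way exactly once.  Once that consistency is nailed down, the independence of the individual removal times makes the double sum collapse cleanly.
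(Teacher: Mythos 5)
Your argument is correct, but there is nothing in the paper to compare it against: the statement is imported verbatim from Greene's \emph{Posets of shuffles} (his Theorem~3.4) and the paper supplies no proof, only the citation. Greene treats the general two-parameter lattice $\ShufflePoset(m,n)$, where the corresponding formulas involve sums of products of binomial coefficients (compare the characteristic polynomial quoted as equation~(5.1) in Section~\ref{sec:rank_generating}) and the enumeration is correspondingly heavier; what your proof buys is a short, self-contained derivation of exactly the special case $b=1$ needed here, exploiting the fact that the single letter $\one$, once present, can never disappear along a multichain and that its position is rigidly propagated by deletions. The step you flagged as the main obstacle does go through: deleting letters from a word preserves the relative order of the surviving letters, so for $S_{t+1}\subseteq S_{t}$ there is exactly one $p_{t+1}$ with $(S_{t},p_{t})\preceq(S_{t+1},p_{t+1})$, namely the number of elements of $S_{t+1}$ among the $p_{t}$ smallest elements of $S_{t}$; consequently the assignment of a removal-time vector $(r_{i})$, a first-appearance index $k$ and a free position $p_{k}\in\{0,1,\ldots,\lvert S_{k}\rvert\}$ to a multichain is a bijection onto the multichains containing $\one$, and your two-case count is exact. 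Your three numerical conclusions --- the $\frac{(n+1)!}{2}$ maximal chains, the zeta polynomial, and the M\"obius invariant obtained from Rota's identity $\mu(\Poset)=\ZetaPol_{\Poset}(-1)$ --- all check out (for instance against $\ShufflePoset(2,1)$ in Figure~\ref{fig:shuffle_21}), so the proposal constitutes a valid elementary replacement for the external reference in this special case.
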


\begin{corollary}\label{cor:shuffles_size}
	For $n>0$, $\bigl\lvert\Shuffle(n-1,1)\bigr\rvert=2^{n-2}(n+3)$.
\end{corollary}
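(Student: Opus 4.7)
The plan is to derive this cardinality directly from the zeta polynomial given in Theorem~\ref{thm:shuffle_poset_invariants}. For any finite poset $\Poset=(P,\leq)$, the quantity $\ZetaPol_{\Poset}(2)$ counts $1$-multichains, which are precisely the elements of $P$. Hence $\bigl\lvert\Shuffle(n-1,1)\bigr\rvert = \ZetaPol_{\ShufflePoset(n-1,1)}(2)$, and substituting $q=2$ into $\frac{q^{n-1}}{2}\bigl((n+1)q-n+1\bigr)$ immediately yields $2^{n-2}(n+3)$ after one line of simplification. This is essentially a one-step deduction from an already established result.

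As a cross-check (and an alternate self-contained proof that does not invoke the full zeta polynomial computation), I would run a direct enumeration. An element of $\Shuffle(n-1,1)$ is determined by choosing a subset $S\subseteq\{2,3,\ldots,n\}$ of letters of $\wb_A$ to appear in the shuffle, which are then forced into increasing order, together with either omitting $\one$ or inserting it in one of the $\lvert S\rvert+1$ available slots. If $\lvert S\rvert=k$, this produces $k+2$ shuffles, so the total is
\begin{displaymath}
	\sum_{k=0}^{n-1}\binom{n-1}{k}(k+2) = (n-1)2^{n-2} + 2\cdot 2^{n-1} = (n+3)\,2^{n-2},
\end{displaymath}
using the standard identities $\sum_k \binom{n-1}{k}=2^{n-1}$ and $\sum_k k\binom{n-1}{k}=(n-1)2^{n-2}$. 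This matches the first computation, as required.

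There is no real obstacle; the only point worth being careful about is that $\ZetaPol(2)$ (rather than $\ZetaPol(1)$) recovers the cardinality of the ground set, because $\ZetaPol_{\Poset}(q)$ is defined to count $(q-1)$-multichains. I would additionally note that $\bigl\lvert\Shuffle(n-1,1)\bigr\rvert = \bigl\lvert\Tri(n)\bigr\rvert$ by Proposition~\ref{prop:triwords_size}, which is consistent with---and foreshadows---the isomorphism of Theorem~\ref{thm:hochschild_clo_shuffleposet} between $\CLO\bigl(\Hoch(n)\bigr)$ and $\ShufflePoset(n-1,1)$ whose proof is to follow.
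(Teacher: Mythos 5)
Your proposal is correct and matches the paper's (implicit) argument: the corollary is stated without proof precisely because it follows by evaluating the zeta polynomial of Theorem~\ref{thm:shuffle_poset_invariants} at $q=2$, which counts $1$-multichains, \ie elements. Your direct enumeration cross-check and the observation that the count agrees with Proposition~\ref{prop:triwords_size} are both accurate, though not needed.
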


Recall that $A=\{a_{1},a_{2},\ldots,a_{k}\}$.  Let $\wb=w_{1}w_{2}\cdots w_{r}$ be a subword of $\wb_{A}$ and let $i\in\{0,1,\ldots,k\}$.  We define
\begin{equation}
	\wb\shuffle_{i}\one \defs \begin{cases}\wb, & \text{if}\;i=0,\\w_{1}w_{2}\cdots w_{j}\one w_{j+1}\cdots w_{r}, & \text{if}\;i>0\;\text{and}\;w_{j}=a_{i}.\end{cases}
\end{equation}
In other words, if $i>0$, then we insert $\one$ into $\wb$ after the letter $a_{i}$.  If $w_{j}\neq a_{i}$ for all $j\in[r]$, then $\wb\shuffle_{i}\one$ adds $\one$ at the beginning of $\wb$.

For $\ufr\in\Tri(n)$, let $\tau(\ufr)$ be the subword of $\wb_{A}$ consisting of the positions of $\ufr$ which do not contain the letter $2$.  We define
\begin{equation}\label{eq:triwords_shuffles}
	\sigma\colon\Tri(n)\to\Shuffle(n-1,1),\quad \ufr\mapsto\tau(\ufr)\shuffle_{l_{1}(\ufr)}\one.
\end{equation}
Table~\ref{tab:sigma_bijection_21} illustrates this map in the case $n=3$.

\begin{proposition}\label{prop:bijection_triwords_shuffles}
	The map $\sigma$ from \eqref{eq:triwords_shuffles} is a bijection.
\end{proposition}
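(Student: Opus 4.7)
The plan is to produce an explicit inverse to $\sigma$ by parametrizing both $\Tri(n)$ and $\Shuffle(n-1,1)$ in a common way, and then verifying that $\sigma$ translates into the obvious bijection between the two parametrizations.

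On the triword side, axioms \eqref{it:tri1}--\eqref{it:tri3} force each $\ufr\in\Tri(n)$ to have the shape ``a prescribed set of $2$'s, with the remaining positions filled by a block of $1$'s followed by a block of $0$'s''.  Setting $T_{\ufr}\defs\{i\in[n]\mid u_{i}=2\}$ and $a_{\ufr}\defs\bigl\lvert\{i\in[n]\mid u_{i}=1\}\bigr\rvert$, axioms \eqref{it:tri1} and \eqref{it:tri2} force $T_{\ufr}\subseteq\{2,\ldots,n\}$, and \eqref{it:tri3} forces the $1$'s to occupy the first $a_{\ufr}$ non-$2$ positions of $\ufr$.  Thus $\ufr\leftrightarrow(T_{\ufr},a_{\ufr})$ identifies $\Tri(n)$ with the set of pairs $(T,a)$ satisfying $T\subseteq\{2,\ldots,n\}$ and $0\leq a\leq n-|T|$.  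On the shuffle side, each $\wb\in\Shuffle(n-1,1)$ is determined by the set $S_{\wb}\subseteq\{2,\ldots,n\}$ of its $A$-letters together with an index for $\one$: either $p_{\wb}=\infty$ when $\one$ is absent, or $p_{\wb}\in\{0,1,\ldots,\lvert S_{\wb}\rvert\}$ recording how many $A$-letters of $\wb$ precede $\one$.  A count in either parametrization reproduces $2^{n-2}(n+3)$, matching Proposition~\ref{prop:triwords_size} and Corollary~\ref{cor:shuffles_size}.

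Next I would translate $\sigma$ through these parametrizations.  By construction $\tau(\ufr)$ lists the elements of $\{2,\ldots,n\}\setminus T_{\ufr}$ in increasing order, so $S_{\sigma(\ufr)}=\{2,\ldots,n\}\setminus T_{\ufr}$.  For the placement of $\one$: when $a_{\ufr}\geq 1$, the quantity $l_{1}(\ufr)$ equals the $a_{\ufr}\th$ non-$2$ position of $\ufr$, and since position $1$ is always non-$2$ but is absent from $\tau(\ufr)$, the definition of $\shuffle$ places $\one$ just after the $(a_{\ufr}-1)\st$ letter of $\tau(\ufr)$, giving $p_{\sigma(\ufr)}=a_{\ufr}-1$; when $a_{\ufr}=0$, $\sigma(\ufr)=\tau(\ufr)$ with $\one$ absent.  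Under the two parametrizations $\sigma$ therefore becomes the manifestly invertible map $(T,a)\mapsto\bigl(\{2,\ldots,n\}\setminus T,\,a-1\bigr)$, with $a=0$ producing the $\one$-free shuffle, which finishes the proof.

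I do not anticipate a genuine obstacle here; the one delicate point worth flagging is the boundary case $a_{\ufr}=1$ (so $l_{1}(\ufr)=1$), in which the last $1$ of $\ufr$ sits in a position that does not appear among the letters of $\tau(\ufr)$.  In this case the fallback clause in the definition of $\shuffle_{i}\one$ activates and prepends $\one$, which matches $p_{\sigma(\ufr)}=0=a_{\ufr}-1$ as required.
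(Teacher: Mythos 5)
Your proof is correct and rests on the same structural facts as the paper's: a triword is determined by its set of $2$-positions together with the data of its $1$'s, and a shuffle in $\Shuffle(n-1,1)$ by its set of $A$-letters together with the placement of $\one$; the paper phrases this as injectivity plus an explicit construction of the preimage, while you phrase it as a common parametrization in which $\sigma$ becomes $(T,a)\mapsto(\{2,\ldots,n\}\setminus T,\,a-1)$. Your handling of the boundary case $l_{1}(\ufr)=1$ (the prepending clause of $\shuffle_{i}\one$) is exactly the point that needs checking, and you check it correctly.
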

\begin{proof}
	By \eqref{it:tri3}, any $\ufr\in\Tri(n)$ is uniquely determined by its positions of the $2$s and the position of its last $1$.  Thus, $\sigma$ is injective.  
	
	Conversely, let $\ab\in\Shuffle(n-1,1)$.  Let $M$ denote the set of letters of $\ab$ which are different from $\one$.  Then, we set $u_{i}=2$ for all $i\in\{2,3,\ldots,n\}\setminus M$.  If $\one$ is not a letter of $\ab$, then we set $u_{i}=0$ if $i=1$ or $i\in M$.  Otherwise, suppose that $\one$ is the $(j+1)\st$ letter of $\ab$ and let $a_{j}$ be the letter of $\ab$ directly preceding $\one$.  We set $u_{i}=1$ if $i=1$ or if $i\in M$ such that $i\leq a_{j}$.  We set $u_{i}=0$ if $i\in M$ with $i>a_{j}$.  By construction, $\ufr_{\ab}=(u_{1},u_{2},\ldots,u_{n})$ satisfies \eqref{it:tri1}--\eqref{it:tri3}.  Moreover, it is immediately clear that $\sigma(\ufr_{\ab})=\ab$.
\end{proof}

\begin{example}
	Let $\ab=3\;4\;\one\;6\;9\;10\in\Shuffle(9,1)$.  We have $M=\{3,4,6,9,10\}$, and we see that $\ab$ contains the letter $\one$ in position $j+1=3$, and we obtain $a_{2}=4$.  Then, $\{2,3,\ldots,10\}\setminus M=\{2,5,7,8\}$, and we find $\ufr_{\ab}=(1,2,1,1,2,0,2,2,0,0)\in\Tri(10)$.
	
	We have $l_{1}(\ufr_{\ab})=4$ and $\tau(\ufr_{\ab})=3\;4\;6\;9\;10$.  Thus, $\sigma(\ufr_{\ab})=\tau(\ufr_{\ab})\shuffle_{4}\one = 3\;4\;\one\;6\;9\;10=\ab$.
\end{example}

\begin{table}
	\centering
	\begin{tabular}{c|c|c|c}
		$\ufr\in\Tri(3)$ & $\tau(\ufr)$ & $l_{1}(\ufr)$ & $\sigma(\ufr)=\tau(\ufr)\shuffle_{l_{1}(\ufr)}\one$\\
		\hline\hline
		$(0,0,0)$ & $23$ & $0$ & $23$ \\
		$(0,0,2)$ & $2$ & $0$ & $2$ \\ 
		$(0,2,0)$ & $3$ & $0$ & $3$ \\
		$(0,2,2)$ & $\nil$ & $0$ & $\nil$ \\
		$(1,0,0)$ & $23$ & $1$ & $\one 23$ \\
		$(1,0,2)$ & $2$ & $1$ & $\one 2$ \\
		$(1,1,0)$ & $23$ & $2$ & $2\one 3$ \\
		$(1,1,1)$ & $23$ & $3$ & $23\one$ \\
		$(1,1,2)$ & $2$ & $2$ & $2\one$ \\
		$(1,2,0)$ & $3$ & $1$ & $\one 3$ \\
		$(1,2,1)$ & $3$ & $3$ & $3\one$ \\
		$(1,2,2)$ & $\nil$ & $1$ & $\one$ \\
	\end{tabular}
	\caption{The bijection $\sigma$ illustrated for $n=3$.}
	\label{tab:sigma_bijection_21}
\end{table}

\begin{figure}
	\centering
	\begin{tikzpicture}\small
		\def\x{2};
		\def\y{2};
		\def\s{.95};
		\draw(3*\x,1*\y) node[scale=\s](n1){$23$};
		\draw(1*\x,2*\y) node[scale=\s](n2){$2$};
		\draw(2*\x,2*\y) node[scale=\s](n3){$23\one$};
		\draw(3*\x,2*\y) node[scale=\s](n4){$\one23$};
		\draw(4*\x,2*\y) node[scale=\s](n5){$3$};
		\draw(5*\x,2*\y) node[scale=\s](n6){$2\one3$};
		\draw(1*\x,3*\y) node[scale=\s](n7){$\one2$};
		\draw(2*\x,3*\y) node[scale=\s](n8){$\nil$};
		\draw(3*\x,3*\y) node[scale=\s](n9){$2\one$};
		\draw(4*\x,3*\y) node[scale=\s](n10){$3\one$};
		\draw(5*\x,3*\y) node[scale=\s](n11){$\one3$};
		\draw(3*\x,4*\y) node[scale=\s](n12){$\one$};
		\draw(n1) -- (n2);
		\draw(n1) -- (n3);
		\draw(n1) -- (n4);
		\draw(n1) -- (n5);
		\draw(n1) -- (n6);
		\draw(n2) -- (n8);
		\draw(n2) -- (n7);
		\draw(n2) -- (n9);
		\draw(n3) -- (n9);
		\draw(n3) -- (n10);
		\draw(n4) -- (n7);
		\draw(n4) -- (n11);
		\draw(n5) -- (n8);
		\draw(n5) -- (n10);
		\draw(n5) -- (n11);
		\draw(n6) -- (n9);
		\draw(n6) -- (n11);
		\draw(n7) -- (n12);
		\draw(n8) -- (n12);
		\draw(n9) -- (n12);
		\draw(n10) -- (n12);
		\draw(n11) -- (n12);
	\end{tikzpicture}
	\caption{The lattice $\ShufflePoset(2,1)$.}
	\label{fig:shuffle_21}
\end{figure}

We now prove the main result of this section, which states that the core label order of $\Hoch(n)$ is isomorphic to $\ShufflePoset(n-1,1)$.

\begin{proof}[Proof of Theorem~\ref{thm:hochschild_clo_shuffleposet}]
	We prove that the map $\sigma$ from \eqref{eq:triwords_shuffles} is an isomorphism from $\CLO\bigl(\Hoch(n)\bigr)$ to $\ShufflePoset(n-1,1)$.  Let $\ufr,\ufr'\in\Tri(n)$ and let $\ab=\sigma(\ufr)$ and $\ab'=\sigma(\ufr')$.  Throughout this proof, we write $\Psi(\ufr)$ instead of $\Psi_{\Hoch(n)}(\ufr)$.
	
	\medskip
	
	First, suppose that $\Psi(\ufr)\subseteq\Psi(\ufr')$.  By Proposition~\ref{prop:hochschild_core_labels}, the positions of the $2$s in $\ufr$ form a subset of the positions of the $2$s in $\ufr'$.  We distinguish two cases.
	
	(i) If $l_{1}(\ufr)=0$, then $\ufr$ does not contain a $1$ and $\ab$ does not contain $\one$.  
	
	If $l_{1}(\ufr')=0$, then $\ab'$ does not contain $\one$, and $\Psi(\ufr)\subseteq\Psi(\ufr')$ implies that $\ab'$ is obtained from $\ab$ by (potentially) removing elements of $\{2,3,\ldots,n\}$, which implies $\ab\preceq\ab'$.  
	
	If $l_{1}(\ufr')>0$, then $\ab'$ contains $\one$.  As before, $\Psi(\ufr)\subseteq\Psi(\ufr')$ implies that $\ab'$ is obtained from $\ab$ by adding $\one$ and (potentially) removing elements of $\{2,3,\ldots,n\}$, which implies $\ab\preceq\ab'$.
	
	(ii) If $l_{1}(\ufr)>0$, then $\Psi(\ufr)\subseteq\Psi(\ufr')$ implies that $l_{1}(\ufr')>0$ and $l_{1}(\ufr)\geq l_{1}(\ufr')$ and $f_{0}(\ufr)\leq f_{0}(\ufr')$.  By construction, for every $i\in[n]$ with $l_{1}(\ufr')\leq i<l_{1}(\ufr)$ or $f_{0}(\ufr)\leq i<f_{0}(\ufr')$ we must have $u'_{i}=2$.  Thus, $\ab'$ is obtained from $\ab$ by removing elements of $\{2,3,\ldots,n\}$, which implies $\ab\preceq\ab'$.
	
	\medskip
	
	Conversely, suppose that $(\ab,\ab')\in\Covers\bigl(\ShufflePoset(n-1,1)\bigr)$.  There are two cases.
	
	(i) There exists $j\in\{2,3,\ldots,n\}$ which is contained in $\ab$ but not in $\ab'$.  By Proposition~\ref{prop:bijection_triwords_shuffles}, $u_{j}\neq 2$ and $u'_{j}=2$.  Then, $l_{1}(\ufr)\geq l_{1}(\ufr')$, $f_{0}(\ufr)\leq f_{0}(\ufr')$ and $\{i\in[n]\mid u_{i}=2\}\subsetneq\{i\in[n]\mid u'_{i}=2\}$.  By Proposition~\ref{prop:hochschild_core_labels}, $\Psi(\ufr)\subsetneq\Psi(\ufr')$.
	
	(ii) $\one$ is not contained in $\ab$ and $\one$ is contained in $\ab'$, say in position $j+1$.  By Proposition~\ref{prop:bijection_triwords_shuffles}, $\ufr$ does not contain $1$ and thus $l_{1}(\ufr)=0$.  Moreover, $\bigr\{i\in[n]\mid u_{i}=2\bigr\}=\bigl\{i\in[n]\mid u'_{i}=2\bigr\}$.  Since $\one$ is contained in $\ab'$, $\ufr'$ contains $1$ and thus $l_{1}(\ufr')>0$.  By Proposition~\ref{prop:hochschild_core_labels}, $\Psi(\ufr)\subsetneq\Psi(\ufr')$.
	
	We conclude that $\ab\preceq\ab'$ implies $\Psi(\ufr)\subseteq\Psi(\ufr')$, which finishes the proof.
\end{proof}

\begin{proposition}\label{prop:hochschild_clo_ranks}
	For $n>0$, the lattice $\CLO\bigl(\Hoch(n)\bigr)$ is graded and for $0\leq k\leq n$ its number of elements of rank $k$ is
	\begin{displaymath}
		\binom{n}{k}+(n-k)\binom{n-1}{k-1}.
	\end{displaymath}
\end{proposition}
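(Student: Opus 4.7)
The plan is to transport the counting problem to the shuffle lattice via the isomorphism $\CLO\bigl(\Hoch(n)\bigr)\cong\ShufflePoset(n-1,1)$ established in Theorem~\ref{thm:hochschild_clo_shuffleposet}, where it becomes much more transparent. First, I would identify a candidate rank function on $\ShufflePoset(n-1,1)$. Writing $A=\{2,3,\ldots,n\}$ and $B=\{\one\}$ as in Section~\ref{sec:triwords_shuffles}, for $\wb\in\Shuffle(n-1,1)$ define
\begin{displaymath}
	\rk(\wb) \defs (n-1)-\bigl\lvert\overline{\wb}\cap A\bigr\rvert+\bigl\lvert\overline{\wb}\cap B\bigr\rvert.
\end{displaymath}
The bottom $\wb_{A}=23\cdots n$ has $\rk=0$ and the top $\one$ has $\rk=n$, as required.

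Next, I would verify that this is actually the rank function by analyzing covers in $\ShufflePoset(n-1,1)$. Each cover $\wb_{1}\lessdot\wb_{2}$ arises either by deleting a single letter of $A$ from $\wb_{1}$ or, when $\one\notin\overline{\wb_{1}}$, by inserting $\one$ at one of the available slots of $\wb_{1}$. Either operation increases $\rk$ by exactly $1$. Conversely, whenever $\wb_{1}\preceq\wb_{2}$ differ by more than one elementary operation, an intermediate shuffle is obtained by performing only one of the required removals (if the number of deleted $A$-letters is positive) or only the $\one$-insertion (if not). Hence rank never jumps and the lattice is graded.

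Finally, I would enumerate rank-$k$ shuffles by splitting on whether $\one\in\overline{\wb}$. If $\one\notin\overline{\wb}$, then $\wb$ is a subword of $\wb_{A}$ of length $n{-}1{-}k$, contributing $\binom{n-1}{k}$ elements. If $\one\in\overline{\wb}$, then $\wb$ is obtained from a subword of $\wb_{A}$ of length $n{-}k$ by inserting $\one$ in one of $n{-}k{+}1$ available positions, contributing $(n{-}k{+}1)\binom{n-1}{k-1}$ elements. The total is
\begin{displaymath}
	\binom{n-1}{k}+(n-k+1)\binom{n-1}{k-1} = \binom{n}{k}+(n-k)\binom{n-1}{k-1},
\end{displaymath}
using Pascal's rule $\binom{n-1}{k}+\binom{n-1}{k-1}=\binom{n}{k}$.

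The main obstacle I anticipate is the characterization of covers in $\ShufflePoset(n-1,1)$: one must rule out that a combined move (simultaneous $A$-deletion and $\one$-insertion) could be a cover. This amounts to exhibiting a valid intermediate shuffle for any comparability of rank gap at least $2$, which is routine once one treats the two cases (same $\one$-presence or differing $\one$-presence) separately. Apart from this bookkeeping, the count itself is a straightforward binomial identity.
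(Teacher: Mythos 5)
Your proposal is correct and follows essentially the same route as the paper: both transport the problem to $\ShufflePoset(n-1,1)$ via Theorem~\ref{thm:hochschild_clo_shuffleposet}, use the rank function $\rk(\wb)=(n-1)-\lvert\overline{\wb}\cap A\rvert+\lvert\overline{\wb}\cap B\rvert$, and arrive at the same intermediate count $\binom{n-1}{k}+(n-k+1)\binom{n-1}{k-1}$ before applying Pascal's rule. The only cosmetic differences are that the paper simply cites \cite{greene88posets}*{Section~2} for gradedness where you re-derive it from the cover structure, and that the paper performs the enumeration on the triword side (splitting on $l_{1}(\ufr)=0$, $=1$, $>1$) whereas you count directly among shuffles by the presence of $\one$.
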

\begin{proof}
	It was shown in \cite{greene88posets}*{Section~2} that $\ShufflePoset(n-1,1)$ is graded.  In order to describe the rank function, let $\ab\in\Shuffle(n-1,1)$ and write $w(\ab)$ for the number of elements of $\{2,3,\ldots,n\}$ contained in $\ab$.  Then, the rank of $\ab$ in $\ShufflePoset(n-1,1)$ is
	\begin{displaymath}
		\rk(\ab) = n-1 - w(\ab) + \begin{cases}1, & \text{if}\;\ab\;\text{contains}\;\one,\\0, & \text{otherwise}.\end{cases}
	\end{displaymath}
	In view of the isomorphism $\sigma$ from \eqref{eq:triwords_shuffles}, this translates to $\CLO\bigl(\Hoch(n)\bigr)$ as follows:
	\begin{equation}\label{eq:hochschild_clo_rank}
		\rk(\ufr) = \bigl\lvert\{i\mid u_i=2\}\bigr\rvert + \begin{cases}1 & \text{if}\;l_{1}(\ufr)>0,\\0 & \text{otherwise}.\end{cases}
	\end{equation}

	Now, let $R(n,k)$ denote the number of elements of rank $k$ in $\CLO\bigl(\Hoch(n)\bigr)$.  If $n=1$, then $R(1,0)=1=R(1,1)$. If $n>1$, then $R(n,0)=1$.  Now let $k\geq 1$ and let $\ufr\in\Tri(n)$ with $\rk(\ufr)=k$.  If $l_{1}(\ufr)=0$, then $\ufr$ must contain exactly $k$ letters equal to $2$ in the last $n-1$ positions, because by \eqref{it:tri2} $\ufr$ cannot start with a $2$.  If $l_{1}(\ufr)=1$, then $\ufr$ must have exactly $k-1$ letters $2$ in the last $n-1$ positions.  If $l_{1}(\ufr)>1$, then $\ufr$ must have at least two letters equal to $1$ and $k-1$ letters equal to $2$.  We obtain
	\begin{align*}
		R(n,k) & = \binom{n-1}{k} + \binom{n-1}{k-1} + (n-1)\binom{n-2}{k-1}\\
		& = \binom{n-1}{k} + (n-k+1)\binom{n-1}{k-1}\\
		& = \binom{n}{k} + (n-k)\binom{n-1}{k-1}.\qedhere
	\end{align*}
\end{proof}

\begin{corollary}\label{cor:hochschild_covers_clo_rank}
	Let $\ufr\in\Tri(n)$.  The rank of $\ufr$ in $\CLO\bigl(\Hoch(n)\bigr)$ equals the number of elements covered by $\ufr$ in $\Hoch(n)$.
\end{corollary}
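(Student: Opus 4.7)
The plan is to show that both quantities equal the same explicit expression, namely $\bigl\lvert\{i \in [n] : u_i = 2\}\bigr\rvert + \mathbb{1}[l_1(\ufr) > 0]$, and conclude by comparison. This is essentially a matter of reading off information already contained in the proofs of Proposition~\ref{prop:hochschild_canonical_joinrep} and Proposition~\ref{prop:hochschild_clo_ranks}, so no new technical ingredient is required.

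First, I would count the elements of $\Hoch(n)$ covered by $\ufr$. The construction in the proof of Proposition~\ref{prop:hochschild_canonical_joinrep} explicitly produces all such elements: for each $i \in [n]$ with $u_i = 2$ we obtain one cover, namely $\vfr^{(i)}$ when $i < f_0(\ufr)$ and $\wfr^{(i)}$ when $i > f_0(\ufr)$ (note that $i = f_0(\ufr)$ is impossible since $u_{f_0(\ufr)} = 0$), and if $l_1(\ufr) > 0$ we obtain the additional cover $\mfr$. These elements are pairwise distinct by construction (they differ from $\ufr$ in a single, prescribed position), so the total number of covers of $\ufr$ from below in $\Hoch(n)$ equals $\bigl\lvert\{i : u_i = 2\}\bigr\rvert + \mathbb{1}[l_1(\ufr) > 0]$.

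Next, I would invoke the rank formula \eqref{eq:hochschild_clo_rank}, established inside the proof of Proposition~\ref{prop:hochschild_clo_ranks}, which gives
\begin{displaymath}
    \rk(\ufr) = \bigl\lvert\{i \in [n] : u_i = 2\}\bigr\rvert + \begin{cases}1, & \text{if}\;l_1(\ufr) > 0,\\ 0, & \text{otherwise,}\end{cases}
\end{displaymath}
for the rank of $\ufr$ in $\CLO\bigl(\Hoch(n)\bigr)$. Since the two counts coincide, the corollary follows immediately.

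There is essentially no obstacle here; the substantive work was done in establishing the explicit descriptions of $\Can(\ufr)$ and of the rank function of $\CLO\bigl(\Hoch(n)\bigr)$. The only minor point worth emphasizing in the write-up is the one-to-one correspondence between cover relations below $\ufr$ and the labels appearing in $\Can(\ufr)$, which in this setting is transparent from the explicit construction of the covers rather than needing to be argued through the general injectivity of $\jsdlabeling(\cdot,\ufr)$ on covers.
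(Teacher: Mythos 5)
Your proposal is correct and follows essentially the same route as the paper: both reduce the count of lower covers of $\ufr$ in $\Hoch(n)$ to the explicit construction in the proof of Proposition~\ref{prop:hochschild_canonical_joinrep} (equivalently, to $\lvert\Can(\ufr)\rvert$), obtain the expression $\bigl\lvert\{i\mid u_{i}=2\}\bigr\rvert$ plus one exactly when $l_{1}(\ufr)>0$, and match it against the rank formula \eqref{eq:hochschild_clo_rank}. The only cosmetic difference is that you count the covers directly rather than passing through $\lvert\Can(\ufr)\rvert$ via Theorem~\ref{thm:joinreps_labels}, which changes nothing of substance.
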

\begin{proof}
	The number of elements covered by $\ufr$ in $\Hoch(n)$ is $\bigl\lvert\Can(\ufr)\bigr\rvert$ by Proposition~\ref{prop:hochschild_canonical_joinrep}.  By \eqref{eq:hochschild_joinreps}, this equals the number of $2$s in $\ufr$ plus one if and only if $l_{1}(\ufr)>0$.  In view of \eqref{eq:hochschild_clo_rank}, this number is precisely the rank of $\ufr$ in $\CLO\bigl(\Hoch(n)\bigr)$.
\end{proof}

\section{$M$-, $H$- and $F$-triangles for $\Hoch(n)$}
	\label{sec:chapoton_triangles}
\subsection{Two rank-generating polynomials}
	\label{sec:rank_generating}
We have just seen that $\CLO\bigl(\Hoch(n)\bigr)$ is ranked by the rank function $\rk$.  We abbreviate the M{\"o}bius function of $\CLO\bigl(\Hoch(n)\bigr)$ by $\mu_{n}$, and consider the following two polynomials.  The \defn{rank-generating polynomial}, defined by
\begin{displaymath}
	r_{n}(x) \defs \sum_{\ufr\in\Tri(n)}x^{\rk(\ufr)},
\end{displaymath}
and the (reverse) \defn{characteristic polynomial} of $\CLO\bigl(\Hoch(n)\bigr)$, essentially a weighted version of $r_{n}(x)$:
\begin{displaymath}
	\tilde{\chi}_{n}(x) \defs \sum_{\ufr\in\Tri(n)}\mu_{n}(\ofr,\ufr)x^{\rk(\ufr)}.
\end{displaymath}

\begin{remark}\label{rem:rank_polynomials}
	Clearly, we may define rank-generating and reverse characteristic polynomials verbatim for any graded, bounded poset.
\end{remark}

Using the rank numbers of $\CLO\bigl(\Hoch(n)\bigr)$ from Proposition~\ref{prop:hochschild_clo_ranks}, we may compute a closed formula for $r_{n}(x)$ and \cite{greene88posets}*{Theorem~3.4} provides a closed formula for $\tilde{\chi}_{n}(x)$.

\begin{proposition}\label{prop:hochschild_clo_rank_polynomials}
	For $n>0$, 
	\begin{align*}
		r_{n}(x) & = (x+1)^{n-2}\bigl(x^{2}+(n+1)x+1\bigr),\\
		\tilde{\chi}_{n}(x) & = (1-x)^{n-1}(1-nx).
	\end{align*}
\end{proposition}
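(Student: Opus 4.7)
My plan has two parts, corresponding to the two identities.

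For $r_n(x) = (x+1)^{n-2}(x^2 + (n+1)x + 1)$, I would directly substitute the rank numbers from Proposition~\ref{prop:hochschild_clo_ranks} into the defining sum $r_n(x) = \sum_k R(n,k) x^k$. Splitting $R(n,k) = \binom{n}{k} + (n-k)\binom{n-1}{k-1}$, the first summand produces $(1+x)^n$ via the binomial theorem, and the second, after an index shift and an application of the identity $\sum_j j \binom{n-1}{j} x^j = (n-1)x(1+x)^{n-2}$, collapses to $(n-1)x(1+x)^{n-2}$. Factoring out $(1+x)^{n-2}$ and tidying the remainder yields the stated form.

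For $\tilde{\chi}_n(x) = (1-x)^{n-1}(1-nx)$, the plan is to transport the computation to $\ShufflePoset(n-1,1)$ via the isomorphism $\sigma$ of Theorem~\ref{thm:hochschild_clo_shuffleposet}, reducing the claim to computing $\sum_{\wb} \mu(\hat{0}, \wb) x^{\rk(\wb)}$ over $\wb \in \Shuffle(n-1,1)$.  I would then compute $\mu(\hat{0}, \wb)$ for each $\wb$ by analyzing the principal order ideal $[\hat{0}, \wb]$.  When $\wb$ does not contain $\one$, this ideal is immediately the Boolean lattice on the $A$-letters absent from $\wb$, so $\mu(\hat{0}, \wb) = (-1)^{\rk(\wb)}$.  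When $\wb$ contains $\one$, I would parameterize $\wb$ by the pair $(M, a)$, where $M \subseteq \{2, \ldots, n\}$ is the set of $A$-letters in $\wb$ and $a \in \{0, 1, \ldots, |M|\}$ is the position of $\one$ relative to $M$.  Splitting the complement $T = \{2, \ldots, n\} \setminus M$ according to the gaps of $M$, the $c = c(M, a)$ letters of $T$ lying in the central gap (strictly between the $a$-th and $(a+1)$-th letter of $M$) admit a genuine left/right choice of position relative to $\one$, while the remaining $|T|-c$ non-central letters, if present in an element of the interval, are forced to a specific side by the rule that $A$-letters appear in $A$-order in any shuffle.  The target decomposition is then $[\hat{0}, \wb] \cong B_{|T|-c} \times \ShufflePoset(c,1)$, and combined with Theorem~\ref{thm:shuffle_poset_invariants} this yields $\mu(\hat{0}, \wb) = (-1)^{\rk(\wb)}(c+1)$.

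To assemble $\tilde{\chi}_n(x)$, the $\one$-free terms contribute $(1-x)^{n-1}$ directly.  For the contribution from $\wb$ containing $\one$, the key combinatorial identity will be
\[
\sum_{a=0}^{|M|}\bigl(c(M,a)+1\bigr) = |T| + (|M|+1) = n,
\]
which holds because each letter of $T$ lies in exactly one of the $|M|+1$ gaps of $M$.  This collapses the inner sum over $a$, and a binomial-theorem computation then produces $-nx(1-x)^{n-1}$; adding the two contributions yields $(1-x)^{n-1}(1-nx)$.  The main obstacle will be verifying the product decomposition of $[\hat{0}, \wb]$ when $\wb$ contains $\one$: one has to carefully bookkeep which free letters of $T$ admit a genuine choice relative to $\one$ and confirm that the natural map to $B_{|T|-c} \times \ShufflePoset(c,1)$ is a well-defined order isomorphism.
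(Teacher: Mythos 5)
Your proof of the first identity is the paper's own argument: substitute the rank numbers from Proposition~\ref{prop:hochschild_clo_ranks}, extract $(1+x)^{n}$ from the first summand, and collapse the second summand to $(n-1)x(1+x)^{n-2}$ via the absorption identity; this part matches the paper essentially line for line. For the second identity the routes genuinely diverge. The paper does not compute any M{\"o}bius values itself: it simply quotes Greene's closed formula \eqref{eq:shuffle_charpol} for the reverse characteristic polynomial of $\ShufflePoset(a,b)$ and specializes it at $(a,b)=(n-1,1)$ via the isomorphism of Theorem~\ref{thm:hochschild_clo_shuffleposet}. You instead rederive the relevant special case from scratch, by showing that a lower interval $[\hat{0},\wb]$ in $\ShufflePoset(n-1,1)$ is Boolean when $\one\notin\wb$ and decomposes as $\Bool(\lvert T\rvert-c)\times\ShufflePoset(c,1)$ when $\one\in\wb$, then using Theorem~\ref{thm:shuffle_poset_invariants} to read off $\mu(\hat{0},\wb)=(-1)^{\rk(\wb)}(c+1)$ and the gap-counting identity $\sum_{a}(c(M,a)+1)=n$ to assemble the sum. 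I checked the details: the dichotomy between forced and free $T$-letters relative to $\one$ is exactly right, the product decomposition does hold (the comparability conditions on the $A$-letter sets and on the position of $\one$ decouple into the two factors), and the final binomial computation gives $(1-x)^{n-1}-nx(1-x)^{n-1}$ as claimed. Your version is longer but self-contained modulo Theorem~\ref{thm:shuffle_poset_invariants}, and in effect re-proves the $(n-1,1)$ case of Greene's Theorem~1.3; the paper's version is a one-line citation. The one point you flag as the main obstacle --- verifying the product decomposition --- is indeed the only place where care is needed, and it goes through.
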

\begin{proof}
	By Proposition~\ref{prop:hochschild_clo_ranks}, we obtain
	\begin{align*}
		r_{n}(x) & = \sum_{k=0}^{n}\left(\binom{n}{k} + (n-k)\binom{n-1}{k-1}\right)x^{k}\\
		& = (x+1)^{n} + \sum_{k=1}^{n-1}(n-k)\binom{n-1}{k-1}x^{k}\\
		& = (x+1)^{n} + \sum_{k=0}^{n-2}(k+1)\binom{n-1}{k+1}x^{k+1}\\
		& = (x+1)^{n} + \sum_{k=0}^{n-2}(n-1)\binom{n-2}{k}x^{k+1}\\
		& = (x+1)^{n} + (x+1)^{n-2}(n-1)x,
	\end{align*}
	which equals the desired formula.
	
	Theorem~1.3 in \cite{greene88posets} implies that the reverse characteristic polynomial of $\ShufflePoset(a,b)$ equals
	\begin{equation}\label{eq:shuffle_charpol}
		\tilde{\chi}_{\ShufflePoset(a,b)}(x) = (-1)^{a+b}\sum_{j\geq 0}\binom{a}{j}\binom{b}{j}(x-1)^{a+b-j}x^j.
	\end{equation}
	By Theorem~\ref{thm:hochschild_clo_shuffleposet}, $\CLO\bigl(\Hoch(n)\bigr)\cong\ShufflePoset(n-1,1)$, which yields the claim.
\end{proof}

\subsection{The $M$-triangle of $\CLO\bigl(\Hoch(n)\bigr)$}
	\label{sec:hochschild_m_triangle}
It is straightforward to define a refinement of the (reverse) characteristic polynomial; the \defn{$M$-triangle} of $\CLO\bigl(\Hoch(n)\bigr)$:
\begin{displaymath}
	M_{n}(x,y) \defs \sum_{\ufr,\vfr\in\Tri(n)}\mu_{n}(\ufr,\vfr)x^{\rk(\ufr)}y^{\rk(\vfr)}.
\end{displaymath}

For $\ufr\in\Tri(n)$, let $\tilde{\chi}_{[\ufr,\tfr]}(x)$ denote the (reverse) characteristic polynomial of the interval $[\ufr,\tfr]$ in $\CLO\bigl(\Hoch(n)\bigr)$.  
The following relations are immediate.

\begin{lemma}\label{lem:mtriangle_decomposition}
	For $n>0$,
	\begin{align*}
		M_{n}(x,y) & = \sum_{\ufr\in\Tri(n)}(xy)^{\rk(\ufr)}\tilde{\chi}_{[\ufr,\tfr]}(y),\\
		\tilde{\chi}_{n}(x) & = M_{n}(0,x).
	\end{align*}
\end{lemma}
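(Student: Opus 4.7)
The plan is to unwind the definition of $M_n(x,y)$ by reorganising the double sum according to the lower variable, and then to recognise the resulting inner sum as the (reverse) characteristic polynomial of an interval.

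First I would rewrite
\begin{displaymath}
    M_n(x,y) = \sum_{\ufr \in \Tri(n)} x^{\rk(\ufr)} \sum_{\vfr \in \Tri(n)} \mu_n(\ufr,\vfr)\, y^{\rk(\vfr)},
\end{displaymath}
noting that by the definition of the M\"obius function $\mu_n(\ufr,\vfr)=0$ unless $\ufr\leq \vfr$ in $\CLO\bigl(\Hoch(n)\bigr)$, so the inner sum is effectively over the interval $[\ufr,\tfr]$. Since $\CLO\bigl(\Hoch(n)\bigr)$ is graded by $\rk$ (Proposition~\ref{prop:hochschild_clo_ranks}), the rank function of the induced subposet $[\ufr,\tfr]$ is $\rk_{[\ufr,\tfr]}(\vfr)=\rk(\vfr)-\rk(\ufr)$. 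Writing $y^{\rk(\vfr)} = y^{\rk(\ufr)}\, y^{\rk(\vfr)-\rk(\ufr)}$ and factoring $y^{\rk(\ufr)}$ out of the inner sum, I would identify the remaining sum with $\tilde{\chi}_{[\ufr,\tfr]}(y)$ via the general definition of the reverse characteristic polynomial (cf.\ Remark~\ref{rem:rank_polynomials}). Collecting the factors $x^{\rk(\ufr)}$ and $y^{\rk(\ufr)}$ into $(xy)^{\rk(\ufr)}$ yields the first identity.

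For the second identity, I would substitute $x=0$ into the definition of $M_n$. With the convention $0^0 = 1$ and $0^k = 0$ for $k>0$, only the summands with $\rk(\ufr) = 0$ survive; as $\CLO\bigl(\Hoch(n)\bigr)$ has a unique element of rank $0$, namely $\ofr$, the double sum collapses to $\sum_{\vfr \in \Tri(n)} \mu_n(\ofr,\vfr)\, x^{\rk(\vfr)}$, which is exactly $\tilde{\chi}_n(x)$ by definition.

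There is no genuine obstacle here: the statement is a formal consequence of the definition of the M\"obius function, the gradedness of $\CLO\bigl(\Hoch(n)\bigr)$, and the convention $0^0 = 1$. The only point that requires care is the appeal to gradedness to ensure that the exponent of $y$ splits additively across the interval $[\ufr,\vfr]$; this is precisely what Proposition~\ref{prop:hochschild_clo_ranks} provides.
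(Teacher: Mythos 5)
Your argument is correct and is essentially the same as the paper's: both proofs split $y^{\rk(\vfr)}$ as $y^{\rk(\ufr)}y^{\rk(\vfr)-\rk(\ufr)}$, use the vanishing of $\mu_{n}$ off the interval $[\ufr,\tfr]$ together with gradedness to recognize the inner sum as $\tilde{\chi}_{[\ufr,\tfr]}(y)$, and obtain the second identity from the convention $0^{0}=1$. Your write-up merely makes explicit the appeal to Proposition~\ref{prop:hochschild_clo_ranks} and the uniqueness of the rank-$0$ element, which the paper leaves implicit.
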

\begin{proof}
	For the first equality, we have:
	\begin{align*}
		M_{n}(x,y) & = \sum_{\ufr,\vfr\in\Tri(n)}\mu_{n}(\ufr,\vfr)x^{\rk(\ufr)}y^{\rk(\vfr)}\\
		 & = \sum_{\ufr\in\Tri(n)}(xy)^{\rk(\ufr)}\sum_{\vfr\in\Tri(n)\colon\Psi(\ufr)\subseteq\Psi(\vfr)}\mu_{n}(\ufr,\vfr)y^{\rk(\vfr)-\rk(\ufr)}\\
		 & = \sum_{\ufr\in\Tri(n)}(xy)^{\rk(\ufr)}\tilde{\chi}_{[\ufr,\tfr]}(y).
	\end{align*}
	The second equality follows if we evaluate $0^{0}=1$.
\end{proof}

\begin{theorem}\label{thm:hochschild_clo_mtriangle}
	For $n>0$,
	\begin{displaymath}
		M_{n}(x,y) = (xy-y+1)^{n-2}\Bigl((n+1)\bigl((x-1)y-xy^{2}\bigr)+(n+x^{2})y^{2}+1\Bigr).
	\end{displaymath}
\end{theorem}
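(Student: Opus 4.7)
The plan is to apply Lemma~\ref{lem:mtriangle_decomposition}, which gives
$$M_n(x,y) = \sum_{\ufr \in \Tri(n)} (xy)^{\rk(\ufr)} \tilde{\chi}_{[\ufr,\tfr]}(y),$$
and then to transport this sum to $\ShufflePoset(n-1,1)$ via the isomorphism $\sigma$ from Theorem~\ref{thm:hochschild_clo_shuffleposet}. Recall from the proof of Proposition~\ref{prop:hochschild_clo_ranks} that the rank of $\ab \in \Shuffle(n-1,1)$ is $n-1-w(\ab)+\epsilon(\ab)$, where $w(\ab)$ counts the letters from $\{2,\ldots,n\}$ appearing in $\ab$ and $\epsilon(\ab)=1$ if $\one$ appears in $\ab$ and $0$ otherwise.

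The key step is to describe each upper interval $[\ab,\one]$ in $\ShufflePoset(n-1,1)$. If $\ab$ contains $\one$, then every $\bb\succeq\ab$ is obtained by deleting some subset of the $w(\ab)$ letters of $\ab$ lying in $\{2,\ldots,n\}$, with the $\one$ fixed in place; hence $[\ab,\one]\cong\Bool_{w(\ab)}$ and $\tilde{\chi}_{[\ab,\one]}(y)=(1-y)^{w(\ab)}$. If $\ab$ does not contain $\one$, a relabeling of the $A$-letters of $\ab$ yields an order isomorphism $[\ab,\one]\cong\ShufflePoset(w(\ab),1)$, so by Proposition~\ref{prop:hochschild_clo_rank_polynomials} we have $\tilde{\chi}_{[\ab,\one]}(y)=(1-y)^{w(\ab)}\bigl(1-(w(\ab)+1)y\bigr)$.

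Next, I would stratify by $w=w(\ab)$: there are $\binom{n-1}{w}$ elements with $w(\ab)=w$ and $\epsilon(\ab)=0$, and $(w+1)\binom{n-1}{w}$ elements with $w(\ab)=w$ and $\epsilon(\ab)=1$ (choose the $A$-letters, then one of $w+1$ slots for $\one$). Setting $z=xy$, $t=1-y$ and $k=n-1$, substituting into Lemma~\ref{lem:mtriangle_decomposition} and collecting the terms involving $y$ gives
$$M_n(x,y) = (z-y)\sum_{w=0}^{k}(w+1)\binom{k}{w}z^{k-w}t^w + (z+t)^{k}.$$
The identity $\sum_{w=0}^{k}(w+1)\binom{k}{w}z^{k-w}t^w=(z+t)^{k-1}\bigl(z+(k+1)t\bigr)$, which follows from differentiating $(z+t)^{k}$ with respect to $t$, then collapses the right-hand side to $(z+t)^{k-1}\bigl((z-y)(z+nt)+(z+t)\bigr)$. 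Expanding this in $x$ and $y$ and noting $z+t=xy-y+1$ recovers the stated formula.

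The main obstacle will be the interval identification $[\ab,\one]\cong\ShufflePoset(w(\ab),1)$ when $\one\notin\ab$. The bijection is natural once one observes that the shuffle order depends only on the number of $A$-letters involved and not on their specific values, but one must carefully verify that inserting $\one$ into $\ab$ between any two adjacent $A$-letters (or at either end) and subsequently deleting some $A$-letters produces exactly the same poset structure as the smaller shuffle lattice on $w(\ab)$ letters. Once this interval identification is in hand, the remainder is routine generating-function manipulation.
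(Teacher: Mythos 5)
Your proposal is correct and follows essentially the same route as the paper: both apply Lemma~\ref{lem:mtriangle_decomposition}, identify each upper interval as a Boolean lattice or a smaller shuffle lattice (the paper states this as $[\ufr,\tfr]\cong\Bool(n-k)$ or $\CLO\bigl(\Hoch(n-k)\bigr)$ via Theorem~\ref{thm:hochschild_clo_shuffleposet}, which you verify directly on the shuffle side), use the same rank counts from Proposition~\ref{prop:hochschild_clo_ranks}, and finish with the same differentiation-of-the-binomial-theorem identity. The only differences are cosmetic: you index by $w=w(\ab)$ rather than by rank $k$, and differentiate with respect to $t=1-y$ rather than $x$.
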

\begin{proof}
	Let $\ufr\in\Tri(n)$ with $\rk(\ufr)=k$, and let $[\ufr,\tfr]$ denote the interval between $\ufr$ and $\tfr$ in $\CLO\bigl(\Hoch(n)\bigr)$ regarded as an induced subposet.  In view of the isomorphism from Theorem~\ref{thm:hochschild_clo_shuffleposet}, it is straightforward to verify that 
	\begin{displaymath}
		[\ufr,\tfr] \cong \begin{cases}\CLO\bigl(\Hoch(n-k)\bigr), & \text{if}\;l_{1}(\ufr)=0,\\\Bool(n-k), & \text{otherwise}.\end{cases}
	\end{displaymath}
	
	As shown in the proof of Proposition~\ref{prop:hochschild_clo_ranks}, there are $\binom{n-1}{k}$ triwords of rank $k$ (in $\CLO\bigl(\Hoch(n)\bigr)$) which do not contain the letter $1$, and $(n-k+1)\binom{n-1}{k-1}$ triwords of rank $k$ which do.  
	
	Since $\Bool(n)\cong\ShufflePoset(n,0)$ by construction, Equation~\eqref{eq:shuffle_charpol} implies that
	\begin{displaymath}
		\tilde{\chi}_{\Bool(n)}(x) = (1-x)^{n}.
	\end{displaymath}
	Now, using Lemma~\ref{lem:mtriangle_decomposition}, we obtain
	\begin{align*}
		M_{n}(x,y) & = \sum_{\ufr\in\Tri(n)}(xy)^{\rk(\ufr)}\tilde{\chi}_{[\ufr,\tfr]_{\CLO}}(y)\\
		& = \sum_{k=0}^{n}(xy)^{k}\left(\binom{n-1}{k}\tilde{\chi}_{\CLO\bigl(\Hoch(n-k)\bigr)}(y)\right.\\
		& \kern2cm \left. + (n-k+1)\binom{n-1}{k-1}\tilde{\chi}_{\Bool(n-k)}(y)\right)\\
		& = \sum_{k=0}^{n}(xy)^{k}\left(\binom{n-1}{k}(1-y)^{n-k-1}\bigl(1-(n-k)y\bigr)\right.\\
		& \kern2cm \left. + (n-k+1)\binom{n-1}{k-1}(1-y)^{n-k}\right)\\
		& = \sum_{k=0}^{n-1}\binom{n-1}{k}(xy)^{k}(1-y)^{n-k-1}\bigl(1-(n-k)y\bigr)\\
		& \kern2cm + xy\sum_{k=0}^{n-1}\binom{n-1}{k}(xy)^{k}(1-y)^{n-k-1}(n-k).
	\end{align*}
	Let us treat the two sums separately.  We define
	\begin{align*}
		S_{1}(x,y) & \defs \sum_{k=0}^{n-1}\binom{n-1}{k}(xy)^{k}(1-y)^{n-k-1}\bigl(1-(n-k)y\bigr),\\
		S_{2}(x,y) & \defs \sum_{k=0}^{n-1}\binom{n-1}{k}(xy)^{k}(1-y)^{n-k-1}(n-k),
	\end{align*}
	so that
	\begin{equation}\label{eq:target}
		M_{n}(x,y) = S_{1}(x,y) + xyS_{2}(x,y).
	\end{equation}
	We see right away that 
	\begin{equation}\label{eq:simpler}
		S_{1}(x,y) = (xy-y+1)^{n-1} - yS_{2}(x,y).
	\end{equation}
	Partial differentiation and the Binomial Theorem yield
	\begin{align*}
		\sum_{k=0}^{n-1}\binom{n-1}{k}k(xy)^{k}(1-y)^{n-k-1} & = x\left(\sum_{k=0}^{n-1}\binom{n-1}{k}kx^{k-1}y^{k}(1-y)^{n-k-1}\right)\\
		& = x\left(\frac{d}{dx}\sum_{k=0}^{n-1}\binom{n-1}{k}(xy)^{k}(1-y)^{n-k-1}\right)\\
		& = x\left(\frac{d}{dx}(xy-y+1)^{n-1}\right)\\
		& = (n-1)xy(xy-y+1)^{n-2},
	\end{align*}
	which implies
	\begin{align*}
		S_{2}(x,y) & = n\sum_{k=0}^{n-1}\binom{n-1}{k}(xy)^{k}(1-y)^{n-k-1} - \sum_{k=0}^{n-1}\binom{n-1}{k}k(xy)^{k}(1-y)^{n-k-1}\\
		& = n(xy-y+1)^{n-1} - (n-1)xy(xy-y+1)^{n-2}\\
		& = (xy-y+1)^{n-2}(xy-ny+n).
	\end{align*}
	Combining this with \eqref{eq:target} and \eqref{eq:simpler} gives
	\begin{align*}
		M_{n} & (x,y) = S_{1}(x,y) + xyS_{2}(x,y)\\
		& = (xy-y+1)^{n-1} + (xy-y)S_{2}(x,y)\\
		& = (xy-y+1)^{n-1} + (xy-y)(xy-y+1)^{n-2}(xy-ny+n)\\
		& = (xy-y+1)^{n-2}\Bigl((n+1)\bigl((x-1)y-xy^{2}\bigr) + (n+x^{2})y^{2} + 1\Bigr).\qedhere
	\end{align*}
\end{proof}

\begin{example}\label{ex:hochschild_3_m_triangle}
	Figure~\ref{fig:hochschild_3_clo} shows the lattice $\CLO\bigl(\Hoch(3)\bigr)$.  It has five elements of rank $1$, two elements of rank $2$ inducing an ideal with five elements and three elements of rank $2$ inducing an ideal with four elements.  Thus, the M{\"o}bius invariant of $\CLO\bigl(\Hoch(3)\bigr)$ is $3$.  Finally, since there are twelve cover relations connecting elements of rank $1$ and rank $2$, we obtain
	\begin{align*}
		M_{3}(x,y) & = 1 + 5xy + 5x^{2}y^{2} + x^{3}y^{3} - 5y + 7y^{2} - 3y^{3} - 12xy^{2} + 7xy^{3} - 5x^{2}y^{3}\\
		& = (xy-y+1)\bigl(4((x-1)y-xy^{2})+(3+x^{2})y^{2}+1\bigr).
	\end{align*}
\end{example}

\begin{remark}\label{rem:boolean_mtriangle}
	Of course, we may as well define the $M$-triangle for any graded poset.  Indeed, since every interval in $\Bool(n)$ is isomorphic to a smaller Boolean lattice, Lemma~\ref{lem:mtriangle_decomposition} yields
	\begin{align*}
		M_{\Bool(n)}(x,y) & = \sum_{A\subseteq[n]}(xy)^{\lvert A\rvert}\tilde{\chi}_{\Bool(n-\lvert A\rvert)}(y)\\
		& = \sum_{k=0}^{n}\binom{n}{k}(xy)^{k}(1-y)^{n-k}\\
		& = (xy-y+1)^{n}.
	\end{align*}
\end{remark}

\subsection{$F$ and $H$-triangles for $\Hoch(n)$}
	\label{sec:hochschild_fh_triangle}
One of the first occurrences of the $M$-triangle of a graded poset is perhaps \cite{chapoton04enumerative}, where such a polynomial was introduced for the lattice of noncrossing partitions associated with a finite Coxeter group.  Subsequently, other $M$-triangles were considered and computed for instance in \cites{armstrong09generalized,chapoton04enumerative,chapoton06sur,garver17enumerative,krattenthaler19rank,muehle18noncrossing}.  

An intriguing property of the $M$-triangle of noncrossing partition lattices is certain evaluations produce polynomials with nonnegative integer coefficients that combinatorially realize a refined counting of important objects in Coxeter--Catalan theory~\cite{armstrong09generalized}*{Section~5.3}.  See \cites{chapoton04enumerative,chapoton06sur} for the origins.  Translated to our setting, we are interested in the \defn{$F$-} and the \defn{$H$-triangle} associated with $\Hoch(n)$:
\begin{align*}
	F_{n}(x,y) & \defs y^{n}M_{n}\left(\frac{y+1}{y-x},\frac{y-x}{y}\right),\\
	H_{n}(x,y) & \defs \bigl(x(y-1)+1)^{n}M_{n}\left(\frac{y}{y-1},\frac{x(y-1)}{x(y-1)+1}\right).
\end{align*}

\begin{corollary}\label{cor:hochschild_fh_triangle}
	For $n>0$,
	\begin{align*}
		F_{n}(x,y) & = (x+y+1)^{n-2}\Bigl(nx^{2} + 2xy + (n+1)x + (y+1)^{2}\Bigr),\\
		H_{n}(x,y) & = (xy+1)^{n-2}\Bigl((xy+1)^{2} + (n-1)x\Bigr).
	\end{align*}
\end{corollary}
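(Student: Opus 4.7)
The proof is purely computational: since Theorem~\ref{thm:hochschild_clo_mtriangle} gives a closed form for $M_n(x,y)$, both $F_n(x,y)$ and $H_n(x,y)$ arise by a single rational substitution followed by clearing denominators. The plan is to handle each substitution in turn, first simplifying the ``prefactor'' $(XY-Y+1)^{n-2}$ (which is where the outer power $y^n$ or $(x(y-1)+1)^n$ gets absorbed), and then simplifying the polynomial bracket separately.

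For $F_n(x,y)$, substitute $X=\tfrac{y+1}{y-x}$ and $Y=\tfrac{y-x}{y}$ in $M_n(X,Y)$. The key convenient cancellation is $XY=\tfrac{y+1}{y}$, from which
\begin{displaymath}
	XY-Y+1 = \frac{(y+1)-(y-x)+y}{y} = \frac{x+y+1}{y},
\end{displaymath}
so $(XY-Y+1)^{n-2}=(x+y+1)^{n-2}/y^{n-2}$. The factor $y^n$ from the definition of $F_n$ thus produces the prefactor $(x+y+1)^{n-2}$ and a left-over $y^2$ which cancels the denominator of the bracket. In the bracket one computes $(X-1)Y=\tfrac{x+1}{y}$, $XY^2=\tfrac{(y+1)(y-x)}{y^2}$, $X^2Y^2=\tfrac{(y+1)^2}{y^2}$ and $Y^2=\tfrac{(y-x)^2}{y^2}$, after which routine collection of coefficients in the numerator yields $nx^2+2xy+(n+1)x+(y+1)^2$.

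For $H_n(x,y)$, substitute $X=\tfrac{y}{y-1}$ and $Y=\tfrac{x(y-1)}{x(y-1)+1}$. Here $XY=\tfrac{xy}{x(y-1)+1}$, and
\begin{displaymath}
	XY-Y+1 = \frac{xy - x(y-1) + (x(y-1)+1)}{x(y-1)+1} = \frac{xy+1}{x(y-1)+1},
\end{displaymath}
so $(XY-Y+1)^{n-2}$ contributes $(xy+1)^{n-2}$ after multiplying by $(x(y-1)+1)^n$, with a factor $(x(y-1)+1)^2$ left over to clear the denominator of the bracket. In the bracket one uses $(X-1)Y-XY^2=\tfrac{x-x^2(y-1)^2}{(x(y-1)+1)^2}$ and $(n+X^2)Y^2=\tfrac{nx^2(y-1)^2+x^2y^2}{(x(y-1)+1)^2}$; after adding~$1$ and telescoping $(x(y-1)+1)^2 = x^2(y-1)^2+2x(y-1)+1$, the $\pm x^2(y-1)^2$ terms cancel and the numerator collapses to $(xy+1)^2+(n-1)x$.

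There is no conceptual obstacle, only a bookkeeping one: both substitutions have been designed (in the Coxeter--Catalan tradition) exactly so that the denominators appearing from $X$, $Y$, $XY$, $Y^2$ and $X^2Y^2$ are all powers of a single linear expression ($y$ or $x(y-1)+1$), which is what allows the outer power to absorb them cleanly. The only step that needs care is verifying the simplifications inside the brackets; I would perform each substitution once, factor out the common denominator, and then expand and collect in $x$ and $y$, comparing against the target polynomial coefficient by coefficient.
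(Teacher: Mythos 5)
Your computation is correct and is exactly what the paper intends: its proof of this corollary is the single sentence ``This follows by definition from Theorem~\ref{thm:hochschild_clo_mtriangle},'' and you have simply carried out that substitution explicitly. All the intermediate simplifications you record (in particular $XY-Y+1=\tfrac{x+y+1}{y}$ resp.\ $\tfrac{xy+1}{x(y-1)+1}$, and the cancellation of the $x^{2}(y-1)^{2}$ terms in the $H$-case) check out.
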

\begin{proof}
	This follows by definition from Theorem~\ref{thm:hochschild_clo_mtriangle}.
\end{proof}

\begin{example}\label{ex:hochschild_3_fh_triangle}
	Using the $M$-triangle computed in Example~\ref{ex:hochschild_3_m_triangle}, we notice that
	\begin{align*}
		F_{3}(x,y) & = (x+y+1)\Bigl(3x^{2} + 2xy + 4x + (y+1)^{2}\Bigr),\\
		H_{3}(x,y) & = (xy+1)\Bigl((xy+1)^{2} + 2x\Bigr).
	\end{align*}
\end{example}

\begin{remark}\label{rem:boolean_fh_triangle}
	If we define the analogous polynomials associated with the Boolean lattice, we obtain
	\begin{align*}
		F_{\Bool(n)}(x,y) & = (x+y+1)^{n},\\
		H_{\Bool(n)}(x,y) & = (xy+1)^{n}.
	\end{align*}
	Combinatorially, we may realize these polynomials as generating functions of intervals and elements in $\Bool(n)$, respectively:
	\begin{align*}
		F_{\Bool(n)} & = \sum_{A\subseteq B\subseteq[n]}x^{\lvert A\rvert}y^{n-\lvert A\rvert},\\
		H_{\Bool(n)} & = \sum_{A\subseteq[n]}(xy)^{\lvert A\rvert}.
	\end{align*}
\end{remark}
	
\subsection{Two combinatorial realizations of $F_{n}(x,y)$}
	\label{sec:hochschild_f_triangle}
We start by computing the coefficients of $F_{n}(x,y)$.

\begin{proposition}\label{prop:hochschild_f_triangle_coefficients}
	For $n>0$, the coefficient of $x^{k}y^{l}$ in $F_{n}(x,y)$ is
	\begin{displaymath}
		\binom{n}{k}\binom{n-k}{l}\left(\frac{n(k+1)-k(l+1)}{n}\right).
	\end{displaymath}
\end{proposition}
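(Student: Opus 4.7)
The plan is to start from the closed form
$$F_{n}(x,y) = (x+y+1)^{n-2}\Bigl((y+1)^{2} + (n+1)x + 2xy + nx^{2}\Bigr)$$
given by Corollary~\ref{cor:hochschild_fh_triangle}, to rewrite the quadratic factor in a way that makes the multinomial expansion transparent, and then to read off the coefficient of $x^{k}y^{l}$ directly.

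First I would observe that $(x+y+1)^{2} = (y+1)^{2} + 2xy + 2x + x^{2}$, so the quadratic factor equals
$$(x+y+1)^{2} + (n-1)x^{2} + (n-1)x = (x+y+1)^{2} + (n-1)\,x(x+1).$$
Multiplying by $(x+y+1)^{n-2}$ yields the clean decomposition
$$F_{n}(x,y) = (x+y+1)^{n} + (n-1)\,x(x+1)(x+y+1)^{n-2}.$$

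Next I would expand both summands by the multinomial theorem. The coefficient of $x^{k}y^{l}$ in $(x+y+1)^{n}$ is $\frac{n!}{k!\,l!\,(n-k-l)!} = \binom{n}{k}\binom{n-k}{l}$. For the second summand I would extract the coefficient of $x^{k-1}y^{l}$ in $(x+1)(x+y+1)^{n-2}$, obtaining
$$\frac{(n-2)!}{(k-2)!\,l!\,(n-k-l)!} + \frac{(n-2)!}{(k-1)!\,l!\,(n-k-l-1)!}.$$
A short rewriting expresses each of these as $\binom{n}{k}\binom{n-k}{l}$ times $\frac{k(k-1)}{n(n-1)}$ and $\frac{k(n-k-l)}{n(n-1)}$, respectively. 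Combining the contributions and including the prefactor $n-1$ gives
$$\binom{n}{k}\binom{n-k}{l}\left(1 + \frac{k(k-1) + k(n-k-l)}{n}\right) = \binom{n}{k}\binom{n-k}{l}\cdot \frac{n(k+1) - k(l+1)}{n},$$
which is exactly the claimed expression; the final simplification uses only the elementary identity $k(k-1) + k(n-k-l) = k(n-1-l)$.

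The only minor technicality is the handling of boundary cases such as $k\in\{0,1\}$, $l=0$, or $k+l\geq n-1$, where some of the multinomial coefficients above formally involve the factorial of a negative integer. These terms must be interpreted as zero, and the resulting formula remains valid under the standard convention $\binom{n-k}{l}=0$ whenever $l>n-k$. I do not anticipate a conceptual obstacle: the whole argument is a bookkeeping of the multinomial expansion, and the clever step---which removes most of the work---is the rewriting of the quadratic factor as $(x+y+1)^{2} + (n-1)x(x+1)$, which lets the leading term collapse into the single clean summand $(x+y+1)^{n}$.
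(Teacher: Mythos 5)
Your computation is correct, and it follows the same basic strategy as the paper---start from the closed form in Corollary~\ref{cor:hochschild_fh_triangle} and extract the coefficient of $x^{k}y^{l}$ by expanding---but your bookkeeping is organized differently and more efficiently. The paper multiplies $(x+y+1)^{n-2}$ against each of the four monomial pieces $nx^{2}$, $2xy$, $(n+1)x$, $(y+1)^{2}$ separately, producing a four-term expression for $f_{n,k,l}$ that is then collapsed through a somewhat lengthy chain of binomial identities. Your observation that the quadratic factor equals $(x+y+1)^{2}+(n-1)x(x+1)$, so that
\begin{displaymath}
  F_{n}(x,y)=(x+y+1)^{n}+(n-1)\,x(x+1)(x+y+1)^{n-2},
\end{displaymath}
reduces this to two summands whose coefficients are immediate from the multinomial theorem, and the final simplification $k(k-1)+k(n-k-l)=k(n-1-l)$ is a one-line check. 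I verified the two ratio identities you use, namely that $\tfrac{(n-2)!}{(k-2)!\,l!\,(n-k-l)!}$ and $\tfrac{(n-2)!}{(k-1)!\,l!\,(n-k-l-1)!}$ equal $\binom{n}{k}\binom{n-k}{l}$ times $\tfrac{k(k-1)}{n(n-1)}$ and $\tfrac{k(n-k-l)}{n(n-1)}$ respectively; both are correct, and both degenerate gracefully (to zero on both sides) in the boundary cases $k\in\{0,1\}$ and $l>n-k$, exactly as you note. The decomposition also has the pleasant side effect of mirroring the identity $r_{n}(x)=(x+1)^{n}+(n-1)x(x+1)^{n-2}$ appearing in the proof of Proposition~\ref{prop:hochschild_clo_rank_polynomials}, which your formula recovers upon setting $y=0$ in the first factor's role. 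In short: same route, cleaner decomposition, no gaps.
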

\begin{proof}
	From Corollary~\ref{cor:hochschild_fh_triangle}, we obtain
	\begin{align*}
		F_{n} & (x,y) = \Bigl(x+y+1\Bigr)^{n-2}\Bigl(nx^{2}+2xy+(n+1)x+(y+1)^{2}\Bigr)\\
		& = \left(\sum_{k=0}^{n-2}\binom{n-2}{k}x^{k}\sum_{l=0}^{n-2-k}\binom{n-2-k}{l}y^{l}\right)\\
			& \kern1cm \cdot\Bigl(nx^{2}+2xy+(n+1)x+(y+1)^{2}\Bigr)\\
		& = n\left(\sum_{k=0}^{n-2}\binom{n-2}{k}x^{k+2}\sum_{l=0}^{n-2-k}\binom{n-2-k}{l}y^{l}\right)\\
			& \kern1cm + 2\left(\sum_{k=0}^{n-2}\binom{n-2}{k}x^{k+1}\sum_{l=0}^{n-k}\binom{n-2-k}{l}y^{l+1}\right)\\
			& \kern1cm + (n+1)\left(\sum_{k=0}^{n-2}\binom{n-2}{k}x^{k+1}\sum_{l=0}^{n-2-k}\binom{n-2-k}{l}y^{l}\right)\\
			& \kern1cm + \left(\sum_{k=0}^{n-2}\binom{n-2}{k}x^{k}\sum_{l=0}^{n-k}\binom{n-k}{l}y^{l}\right)\\
		& = n\left(\sum_{k=0}^{n}\binom{n-2}{k-2}x^{k}\sum_{l=0}^{n-k}\binom{n-k}{l}y^{l}\right)\\
			& \kern1cm + 2\left(\sum_{k=0}^{n}\binom{n-2}{k-1}x^{k}\sum_{l=0}^{n-k}\binom{n-1-k}{l-1}y^{l}\right)\\
			& \kern1cm + (n+1)\left(\sum_{k=0}^{n}\binom{n-2}{k-1}x^{k}\sum_{l=0}^{n-k}\binom{n-1-k}{l}y^{l}\right)\\
			& \kern1cm + \left(\sum_{k=0}^{n}\binom{n-2}{k}x^{k}\sum_{l=0}^{n-k}\binom{n-k}{l}y^{l}\right).
	\end{align*}
	So, if $f_{n,k,l}$ denotes the coefficient of $x^{k}y^{l}$ in $F_{n}(x,y)$, then 
	\begin{align*}
		f_{n,k,l} & = n\binom{n-2}{k-2}\binom{n-k}{l} + 2\binom{n-2}{k-1}\binom{n-1-k}{l-1}\\
			& \kern2cm + (n+1)\binom{n-2}{k-1}\binom{n-1-k}{l} + \binom{n-2}{k}\binom{n-k}{l}\\
		& = \binom{n-2}{k-1}\binom{n-k}{l}\left(\frac{n(k-1)}{n-k}+\frac{2l}{n-k}+(n+1)\frac{n-k-l}{n-k}+\frac{n-k-1}{k}\right)\\
		& = \binom{n-2}{k-1}\binom{n-k}{l}\left(\frac{kl+n^2k-nkl+n^2-2nk-n+k}{(n-k)k}\right)\\
		& = \binom{n-1}{k}\binom{n-k}{l}\left(\frac{n(n-1)k-(n-1)kl-(n-1)k+n(n-1)}{(n-k)(n-1)}\right)\\
		& = \binom{n}{k}\binom{n-k}{l}\left(\frac{n(n-1)k-(n-1)kl-(n-1)k+n(n-1)}{n(n-1)}\right)\\
		& = \binom{n}{k}\binom{n-k}{l}\left(\frac{n(k+1)-k(l+1)}{n}\right).\qedhere
	\end{align*}
\end{proof}

Recall from Section~\ref{sec:hochschild_joinreps} that the canonical join representation of $\ufr\in\Tri(n)$ (as an element of $\Hoch(n)$) consists of join-irreducible triwords.  The join-irreducible elements of $\Hoch(n)$ are $\afr^{(i)}$ for $i\in[n]$ or $\bfr^{(i)}$ for $i\in\{2,3,\ldots,n\}$.  The atoms of $\Hoch(n)$ are those join-irreducible elements covering $\ofr$.  These comprise the following set
\begin{displaymath}
	\Atom(n) \defs \bigl\{\afr^{(1)},\bfr^{(2)},\bfr^{(3)},\ldots,\bfr^{(n)}\bigr\}.
\end{displaymath}
Since $\Atom((n)\subseteq\JI\bigl(\Hoch(n)\bigr)$, the canonical join representation of $\ufr\in\Tri(n)$ can be partitioned into atoms and non-atoms.  We use this property for combinatorially realizing the $F$- and the $H$-triangle.   For $\ufr\in\Tri(n)$, we define
\begin{align*}
	\neg(\ufr) & \defs \bigl\lvert\Can(\ufr)\cap\Atom(n)\bigr\rvert,
\end{align*}
and we consider the following polynomial:
\begin{displaymath}
	\tilde{F}_{n}(x,y) \defs \sum_{\ufr\in\Tri(n)}x^{n-\lvert\Can(\ufr)\rvert}(x+1)^{\lvert\Can(\ufr)\rvert-\neg(\ufr)}(y+1)^{\neg(\ufr)}.
\end{displaymath}

\begin{proposition}\label{prop:hochschild_f_triangle_combin}
	For $n>0$, it holds that $F_{n}(x,y)=\tilde{F}_{n}(x,y)$.
\end{proposition}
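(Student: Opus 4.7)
The plan is to evaluate $\tilde{F}_{n}(x,y)$ directly by partitioning $\Tri(n)$ according to the value of $l_{1}(\ufr)$, since by Proposition~\ref{prop:hochschild_canonical_joinrep} the pair $\bigl(\lvert\Can(\ufr)\rvert,\neg(\ufr)\bigr)$ depends only on $l_{1}(\ufr)$ and the number $a$ of letters equal to $2$ in $\ufr$. Concretely, among the $\afr^{(i)}$'s only $\afr^{(1)}$ is an atom of $\Hoch(n)$, while every $\bfr^{(i)}$ is an atom, so the three cases $l_{1}(\ufr)=0$, $l_{1}(\ufr)=1$ and $l_{1}(\ufr)>1$ give rise to the values $\bigl(\lvert\Can(\ufr)\rvert,\neg(\ufr)\bigr)=(a,a)$, $(a+1,a+1)$ and $(a+1,a)$, respectively.

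First I would enumerate the triwords in each class that have exactly $a$ twos. The first two cases are immediate: in both, the first letter is forced ($0$ or $1$, respectively), the remaining $n-1$ letters are freely chosen from $\{0,2\}$, and the condition~\eqref{it:tri3} is vacuous, yielding $\binom{n-1}{a}$ triwords each. For $l_{1}(\ufr)=L>1$, conditions~\eqref{it:tri2} and~\eqref{it:tri3} force $u_{1}=u_{L}=1$, the positions $2,\ldots,L-1$ to lie in $\{1,2\}$, and the positions $L+1,\ldots,n$ to lie in $\{0,2\}$; distributing the $a$ twos between these two blocks and summing over $L\in\{2,\ldots,n\}$ via the Vandermonde convolution yields $(n-1)\binom{n-2}{a}$ triwords.

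Substituting these counts back into the definition of $\tilde{F}_{n}(x,y)$ produces three single sums in $a$, each of which collapses via the binomial theorem to an expression proportional to $(x+y+1)^{n-1}$ or $x(x+y+1)^{n-2}$. Combining the contributions gives $(x+y+1)^{n}+(n-1)x(x+1)(x+y+1)^{n-2}$, and pulling out the common factor $(x+y+1)^{n-2}$ leaves the bracket $(x+y+1)^{2}+(n-1)x(x+1)$, which expands to $nx^{2}+2xy+(n+1)x+(y+1)^{2}$, matching precisely the closed form for $F_{n}(x,y)$ in Corollary~\ref{cor:hochschild_fh_triangle}.

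The main subtlety lies in the third case: one must correctly identify the positions forced by the 01-avoidance rule once $l_{1}(\ufr)$ is fixed, and verify that the free positions split into two independent blocks (the one strictly above and the one strictly below position $L$) so that Vandermonde applies. Once this structural decomposition is nailed down, the remainder of the argument is routine bookkeeping with the binomial theorem.
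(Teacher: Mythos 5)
Your proof is correct, and it rests on the same combinatorial core as the paper's: the observation that $\bigl(\lvert\Can(\ufr)\rvert,\neg(\ufr)\bigr)$ is determined by $l_{1}(\ufr)$ (via the trichotomy $l_{1}=0$, $l_{1}=1$, $l_{1}>1$) together with the number of $2$s, and the counts $\binom{n-1}{a}$, $\binom{n-1}{a}$ and $(n-1)\binom{n-2}{a}$ for the three classes. The difference is in the assembly. The paper extracts the coefficient of $x^{k}y^{l}$ from $\tilde{F}_{n}(x,y)$, imports the counts from the proof of Proposition~\ref{prop:hochschild_clo_ranks} via Corollary~\ref{cor:hochschild_covers_clo_rank}, and then matches the result against the coefficient formula of Proposition~\ref{prop:hochschild_f_triangle_coefficients}, which itself required a separate (and somewhat laborious) expansion of the closed form. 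You instead sum the generating function over each class, collapse each sum with the binomial theorem to a multiple of $(x+y+1)^{n-1}$ or $x(x+1)(x+y+1)^{n-2}$, and compare directly with the product formula of Corollary~\ref{cor:hochschild_fh_triangle}; this bypasses Proposition~\ref{prop:hochschild_f_triangle_coefficients} entirely and is arguably cleaner. You also rederive the class sizes from scratch (the block decomposition forced by \eqref{it:tri2} and \eqref{it:tri3} once $l_{1}(\ufr)=L$ is fixed, plus Vandermonde), which makes the argument self-contained; your structural description of the third class --- $u_{1}=u_{L}=1$, positions $2,\ldots,L-1$ in $\{1,2\}$, positions $L+1,\ldots,n$ in $\{0,2\}$ --- is exactly right. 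Both routes are sound; yours trades a coefficient-level verification for a closed-form one.
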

\begin{proof}
	Let $\tilde{f}_{n,k,l}$ denote the coefficient of $x^{k}y^{l}$ in $\tilde{F}_{n}(x,y)$, and pick $\ufr\in\Tri(n)$.  Suppose first that $\lvert\Can(\ufr)\rvert=n-k$.  If $l_{1}(\ufr)\leq 1$, then $\ufr$ contributes the term $x^{k}(y+1)^{n-k}$ to $\tilde{F}_{n}(x,y)$.  If $l_{1}(\ufr)>1$, then $\ufr$ contributes the term $x^{k}(x+1)(y+1)^{n-k-1}$.  Now suppose that $\lvert\Can(\ufr)\rvert=n-k+1$.  If $l_{1}(\ufr)\leq 1$, then $\ufr$ contributes the term $x^{k-1}(x+1)(y+1)^{n-k}$.  These are the only triwords contributing to the coefficients of a term involving $x^{k}$.  
	
	By Corollary~\ref{cor:hochschild_covers_clo_rank}, the size of the canonical join representation of $\ufr$ equals the rank of $\ufr$ in $\CLO\bigl(\Hoch(n)\bigr)$.  According to the proof of Proposition~\ref{prop:hochschild_clo_ranks}, the number of triwords $\ufr$ with $\rk(\ufr)=k$ and $l_{1}(\ufr)\leq 1$ is $\binom{n}{k}$.  The number of triwords with $\rk(\ufr)=k$ and $l_{1}(\ufr)>1$ is $(n-1)\binom{n-2}{k-1}$.  Thus, we obtain
	\begin{align*}
		\tilde{f}_{n,k,l} & = \binom{n}{n-k}\binom{n-k}{l} + (n-1)\binom{n-2}{n-k-1}\binom{n-k-1}{l}\\
			& \kern1cm + (n-1)\binom{n-2}{n-k}\binom{n-k}{l}\\
		& = \binom{n-k}{l}\left(\binom{n}{n-k}+\frac{(n-1)(n-k-l)}{n-k}\binom{n-2}{n-k-1}+(n-1)\binom{n-2}{n-k}\right)\\
		& = \left(1+\frac{(n-k-l)k}{n}+\frac{(k-1)k}{n}\right)\binom{n-k}{l}\binom{n}{k}\\
		& = \frac{n(k+1)-k(l+1)}{n}\binom{n}{k}\binom{n-k}{l}.
	\end{align*}
	Thus, by Proposition~\ref{prop:hochschild_f_triangle_coefficients}, $\tilde{f}_{n,k,l}$ is exactly the coefficient of $x^{k}y^{l}$ in $F_{n}(x,y)$, which establishes the claim.
\end{proof}

\begin{example}\label{ex:hochschild_3_f_triangle_1}
	By inspection of Figure~\ref{fig:hochschild_3}, we obtain the following values associated with the triwords of size $3$:
	
	\begin{center}\begin{tabular}{c||cccccc}
		$\ufr$ & $(0,0,0)$ & $(0,0,2)$ & $(0,2,0)$ & $(0,2,2)$ & $(1,0,0)$ & $(1,0,2)$\\ 
		\hline\hline
		$\lvert\Can(\ufr)\rvert$ & $0$ & $1$ & $1$ & $2$ & $1$ & $2$\\
		$\neg(\ufr)$ & $0$ & $1$ & $1$ & $2$ & $1$ & $2$\\
		\multicolumn{7}{c}{}\\
		$\ufr$ & $(1,1,0)$ & $(1,1,1)$ & $(1,1,2)$ & $(1,2,0)$ & $(1,2,1)$ & $(1,2,2)$\\
		\hline\hline
		$\lvert\Can(\ufr)\rvert$ & $1$ & $1$ & $2$ & $2$ & $2$ & $3$\\
		$\neg(\ufr)$ & $0$ & $0$ & $1$ & $2$ & $1$ & $3$\\
	\end{tabular}\end{center}
	We thus obtain
	\begin{align*}
		\tilde{F}_{3}(x,y) & = x^{3} + 3x^{2}(y+1) + 3x(y+1)^{2} + (y+1)^{3} + 2x^{2}(x+1) + 2x(x+1)(y+1)\\
		& = (x+y+1)\bigl(3x^{2}+2xy+4x+(y+1)^{2}\bigr)\\
		& = F_{3}(x,y).
	\end{align*}
\end{example}

Despite the fact that $\tilde{F}_{n}(x,y)$ combinatorially realizes $F_{n}(x,y)$, its nature is rather complicated, and its definition does not convey too much information as to what this polynomial essentially counts.  We now attempt a ``geometric'' explanation that is heavily inspired by the recent articles \cites{ceballos19sweak,ceballos21fh} and conversations with C.~Ceballos.  

Let $\Lattice=(L,\leq)$ be a finite lattice, and define by 
\begin{displaymath}
	\Cov_{\downarrow}(a) \defs \bigl\{a'\in L\mid (a',a)\in\Covers(\Lattice)\bigr\}
\end{displaymath}
the set of elements covered by $a$.  For $A\subseteq\Cov_{\downarrow}(a)$, we define the \defn{partial nucleus} of $a$ by
\begin{displaymath}
	a_{\downarrow A} \defs a\wedge\bigwedge_{a'\in A}a'.
\end{displaymath}
Moreover, the \defn{partial core} of $a$ is the interval $\Core_{A}(a)\defs[a_{\downarrow A},a]$.  

Note that, if $A=\emptyset$, then $a_{\downarrow A}=a$ and $\Core_{\emptyset}(a)=\{a\}$, and if $A=\Cov_{\downarrow}(a)$, then $a_{\downarrow A}$ is the nucleus of $a$ defined in Section~\ref{sec:core_label_order} and $\Core_{\Cov_{\downarrow}(a)}(a)$ is the core of $a$.

We now consider the set of all partial cores:
\begin{displaymath}
	\CP(\Lattice) \defs \bigl\{\Core_{A}(a)\mid a\in L,A\subseteq\Cov_{\downarrow}(a)\bigr\}.
\end{displaymath}

Applying this construction to a join-semidistributive lattice, we notice that $\Cov_{\downarrow}(a)$ essentially determines $\Can(a)$ via the map $\jsdlabeling$, see Theorem~\ref{thm:join_semidistributive_joinreps}.  

For the definition of $\tilde{F}_{n}(x,y)$, we have used a partition of the canonical join representation into atoms and non-atoms.  The reason for the shape of the resulting polynomial is better understood using $\CP\bigl(\Hoch(n)\bigr)$, if we define 
\begin{displaymath}
	\tilde{\neg}(\ufr,A) \defs \neg(\ufr)-\bigl\lvert A\cap\Atom(n)\bigr\rvert
\end{displaymath}
for $(\ufr,A)\in\CP\bigl(\Hoch(n)\bigr)$.

\begin{proposition}\label{prop:hochschild_cp_f_triangle_combin}
	For $n>0$,
	\begin{displaymath}
		F_{n}(x,y) = \sum_{(\ufr,A)\in\CP(\Hoch(n))}x^{n-\lvert A\rvert-\tilde{\neg}(\ufr,A)}y^{\tilde{\neg}(\ufr,A)}.
	\end{displaymath}
\end{proposition}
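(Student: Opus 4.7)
The plan is to evaluate the right-hand side directly and recognize it as $\tilde{F}_n(x,y)$, which equals $F_n(x,y)$ by Proposition~\ref{prop:hochschild_f_triangle_combin}. Concretely, I will fix $\ufr\in\Tri(n)$ and compute the inner sum over $A\subseteq\Cov_{\downarrow}(\ufr)$, aiming to show that it equals $x^{n-\lvert\Can(\ufr)\rvert}(x+1)^{\lvert\Can(\ufr)\rvert-\neg(\ufr)}(y+1)^{\neg(\ufr)}$. Summing over $\ufr\in\Tri(n)$ then yields the desired identity immediately.

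Set $c = \lvert\Can(\ufr)\rvert$ and $a = \neg(\ufr)$. The join-semidistributivity of $\Hoch(n)$ (Theorems~\ref{thm:hochschild_properties} and \ref{thm:congruence_uniform_is_semidistributive}) together with Theorem~\ref{thm:joinreps_labels} and a short join-semidistributivity argument show that $\vfr \mapsto \jsdlabeling(\vfr,\ufr)$ is a bijection $\Cov_{\downarrow}(\ufr)\to\Can(\ufr)$. Via this identification I interpret $\lvert A\cap\Atom(n)\rvert$ as counting the $\vfr\in A$ whose $\jsdlabeling$-label lies in $\Atom(n)$, and I partition $\Cov_{\downarrow}(\ufr) = D_0 \uplus D_1$, where $D_1$ is the set of covers with atom-label. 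Then $\lvert D_1\rvert = a$ and $\lvert D_0\rvert = c-a$, and every $A\subseteq\Cov_{\downarrow}(\ufr)$ decomposes uniquely as $A = A_0 \uplus A_1$ with $A_i \subseteq D_i$.

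For such a decomposition one has $\tilde{\neg}(\ufr,A) = a - \lvert A_1\rvert$ and hence $n - \lvert A\rvert - \tilde{\neg}(\ufr,A) = n - a - \lvert A_0\rvert$. Since the $x$-exponent depends only on $A_0$ and the $y$-exponent only on $A_1$, the inner sum factors as a product over $A_0\subseteq D_0$ and $A_1\subseteq D_1$. Reindexing each factor by complements ($A_i \mapsto D_i\setminus A_i$) and applying the Binomial Theorem delivers $x^{n-c}(x+1)^{c-a}$ and $(y+1)^{a}$ respectively. Their product is precisely the summand of $\ufr$ in $\tilde{F}_n(x,y)$, and summing over $\ufr$ finishes the proof.

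I expect the main subtlety to be the correct reading of $\lvert A\cap\Atom(n)\rvert$. Taking it literally (elements of $A$ that are themselves lattice atoms) already fails in small examples: e.g.\ $\ufr=(1,2,2)\in\Tri(3)$ satisfies $\neg(\ufr)=3$ but has no atom among its covers, so the exponents in the stated identity would turn negative. The identification through $\jsdlabeling$ is the only interpretation consistent with the positivity of the $y$-exponent $\tilde{\neg}(\ufr,A)$; once it is made, the proof reduces to the binomial bookkeeping sketched above.
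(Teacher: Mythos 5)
Your proof is correct and follows essentially the same route as the paper's: both reduce the identity to Proposition~\ref{prop:hochschild_f_triangle_combin} and expand $(x+1)^{\lvert\Can(\ufr)\rvert-\neg(\ufr)}(y+1)^{\neg(\ufr)}$ via the Binomial Theorem, reindexing the result as a sum over subsets $A\subseteq\Cov_{\downarrow}(\ufr)$ split according to whether the label $\jsdlabeling(\cdot\,,\ufr)$ is an atom. Your remark that $\lvert A\cap\Atom(n)\rvert$ must be read through the bijection $\Cov_{\downarrow}(\ufr)\to\Can(\ufr)$, $\vfr\mapsto\jsdlabeling(\vfr,\ufr)$, rather than literally, is precisely the convention the paper adopts silently (it defines $A_{-}=A\cap\Atom(n)$ for $A\subseteq\Can(\ufr)$ and then sums over $A\subseteq\Cov_{\downarrow}(\ufr)$), so flagging it is a genuine, if minor, improvement in rigor.
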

\begin{proof}
	This follows essentially from the Binomial Theorem.  Let us abbreviate 
	\begin{displaymath}
		\pos(\ufr) \defs \lvert\Can(\ufr)\setminus\Atom(n)\bigr\rvert = \lvert\Can(\ufr)\rvert-\neg(\ufr).
	\end{displaymath}
	Moreover, for $A\subseteq\Can(\ufr)$, we write $A_{+}=A\setminus\Atom(n)$ and $A_{-}=A\cap\Atom(n)$.  By Proposition~\ref{prop:hochschild_f_triangle_combin}, we have 
	\begin{align*}
		F_{n}(x,y) & = \sum_{\ufr\in\Tri(n)}x^{n-\lvert\Can(\ufr)\rvert}(x+1)^{\pos(\ufr)}(y+1)^{\neg(\ufr)}\\
		& = \sum_{\ufr\in\Tri(n)}x^{n-\lvert\Can(\ufr)\rvert}\sum_{i=0}^{\pos(\ufr)}\binom{\pos(\ufr)}{i}x^{\pos(\ufr)-i}\sum_{j=0}^{\neg(\ufr)}\binom{\neg(\ufr)}{j}y^{\neg(\ufr)-j}\\
		& = \sum_{\ufr\in\Tri(n)}x^{n-\lvert\Can(\ufr)\rvert}\sum_{A\subseteq\Cov_{\downarrow}(\ufr)}x^{\pos(\ufr)-\lvert A_{+}\rvert}y^{\neg(\ufr)-\lvert A_{-}\rvert}\\
		& = \sum_{(\ufr,A)\in\CP(\Hoch(n))}x^{n-\lvert A\rvert-(\neg(\ufr)-\lvert A_{-}\rvert)}y^{\neg(\ufr)-\lvert A_{-}\rvert}.\qedhere
	\end{align*}
\end{proof}

\begin{example}\label{ex:hochschild_3_f_triangle_2}
	Let us continue Example~\ref{ex:hochschild_3_f_triangle_1}.  If we consider $\ufr=(1,2,1)$.  Then 
	\begin{displaymath}
		\Cov_{\downarrow}(\ufr) = \bigl\{(1,2,0),(1,1,1)\bigr\},
	\end{displaymath}
	and $\neg(\ufr)=1$.  Let us write $\ufr_{1}=(1,2,0)$ and $\ufr_{2}=(1,1,1)$.  Then, $\ufr_{1}\notin\Atom(3)$ and $\ufr_{2}\in\Atom(3)$.  The partial cores associated with $\ufr$, together with the corresponding value of $\tilde{\neg}$ are
	
	\begin{center}\begin{tabular}{c||cccc}
		 $(\ufr,A)$ & $\bigl(\ufr,\emptyset\bigr)$ & $\bigl(\ufr,\{\ufr_{1}\}\bigr)$ & $\bigl(\ufr,\{\ufr_{2}\}\bigr)$ & $\bigl(\ufr,\{\ufr_{1},\ufr_{2}\}\bigr)$ \\
		 \hline\hline
		 $\tilde{\neg}(\ufr,A)$ & $1$ & $1$ & $0$ & $0$\\
	\end{tabular}\end{center}
	Therefore, the partial cores associated with $\ufr$ contribute the following terms to $F_{3}(x,y)$ in Proposition~\ref{prop:hochschild_cp_f_triangle_combin}
	\begin{displaymath}
		x^{2}y + xy + x^{2} + x = x(x+1)(y+1),
	\end{displaymath}
	which is precisely the term that $\ufr$ contributes to $F_{3}(x,y)$ (via $\tilde{F}_{3}(x,y)$) in Proposition~\ref{prop:hochschild_f_triangle_combin}.
\end{example}

Since $\Hoch(n)$ arises from the $n$-dimensional freehedron by acyclically orienting its $1$-skeleton, the nonempty faces of $\Free(n)$ are in bijection with the elements of $\CP\bigl(\Hoch(n)\bigr)$.  Indeed, this acyclic orientation equips every face $F$ of $\Free(n)$ with a unique source $a$ and a unique sink $b$.  If $B$ is the set of predecessors of $b$, then $a=b_{\downarrow B}$, and the vertices of $F$ correspond bijectively to the elements of $\Core_{B}(b)$.  

We use this connection to compute the face numbers of $\Free(n)$.  

\begin{proposition}\label{prop:hochschild_cp_faces}
	For $n>0$, the number of partial cores $(\ufr,A)\in\CP\bigl(\Hoch(n)\bigr)$ with $\lvert A\rvert=i$ is
	\begin{displaymath}
		2^{n-i-2}\binom{n}{i}\frac{n(n+3)-i(i-1)}{n}.
	\end{displaymath}
\end{proposition}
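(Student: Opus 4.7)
The plan is to realize the desired count as a coefficient of a univariate polynomial obtained from $F_n(x,y)$ by setting $x=y$. First I would observe that each summand $x^{n-|A|-\tilde{\neg}(\ufr,A)}y^{\tilde{\neg}(\ufr,A)}$ in Proposition~\ref{prop:hochschild_cp_f_triangle_combin} has total degree $n-|A|$, so substituting $x=y=z$ yields
\begin{displaymath}
F_n(z,z) = \sum_{(\ufr,A)\in\CP(\Hoch(n))} z^{n-|A|}.
\end{displaymath}
Hence the number of partial cores $(\ufr,A)$ with $|A|=i$ is precisely the coefficient of $z^{n-i}$ in $F_n(z,z)$.

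Next, from Corollary~\ref{cor:hochschild_fh_triangle} and the simplification $nz^2 + 2z\cdot z + (n+1)z + (z+1)^2 = (n+3)z^2 + (n+3)z + 1$, I would obtain
\begin{displaymath}
F_n(z,z) = (2z+1)^{n-2}\bigl((n+3)z^2 + (n+3)z + 1\bigr).
\end{displaymath}
Expanding $(2z+1)^{n-2}$ by the Binomial Theorem and collecting the coefficient of $z^{n-i}$ gives
\begin{displaymath}
2^{n-i-2}\left[(n+3)\binom{n-2}{n-i-2} + 2(n+3)\binom{n-2}{n-i-1} + 4\binom{n-2}{n-i}\right].
\end{displaymath}

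The final step is to massage this bracketed quantity into $\binom{n}{i}\frac{n(n+3)-i(i-1)}{n}$. I would use the iterated Pascal identity $\binom{n}{i} = \binom{n-2}{n-i-2} + 2\binom{n-2}{n-i-1} + \binom{n-2}{n-i}$ to consolidate the first three terms into $(n+3)\binom{n}{i}$ plus the correction $-(n+3)\binom{n-2}{n-i}$; combining with the trailing $4\binom{n-2}{n-i}$ then produces $(n+3)\binom{n}{i} - (n-1)\binom{n-2}{n-i}$. Finally, the falling-factorial identity $i(i-1)\binom{n}{i} = n(n-1)\binom{n-2}{n-i}$ rewrites the last summand as $\frac{i(i-1)}{n}\binom{n}{i}$, which yields the stated closed form. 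The main---and essentially only---obstacle is this piece of algebraic bookkeeping; no new combinatorial input beyond Proposition~\ref{prop:hochschild_cp_f_triangle_combin} and Corollary~\ref{cor:hochschild_fh_triangle} is required.
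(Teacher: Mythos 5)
Your argument is correct, and it shares its starting point with the paper but finishes differently.  Both proofs reduce the count to a diagonal specialization of the $F$-triangle via Proposition~\ref{prop:hochschild_cp_f_triangle_combin}: since each monomial there has total degree $n-\lvert A\rvert$, the desired number is the coefficient of $z^{n-i}$ in $F_{n}(z,z)$ (the paper phrases this as the coefficient of $x^{i}$ in $x^{n}F_{n}(1/x,1/x)$, which is the same statement).  The divergence is in how that coefficient is extracted.  The paper does not use the closed product formula for $F_{n}$ at this point; it instead writes the count as $\sum_{\ufr}\binom{\lvert\Cov_{\downarrow}(\ufr)\rvert}{i}$, identifies $\lvert\Cov_{\downarrow}(\ufr)\rvert$ with the rank of $\ufr$ in $\CLO\bigl(\Hoch(n)\bigr)$ via Corollary~\ref{cor:hochschild_covers_clo_rank}, and then sums $\binom{k}{i}$ against the rank numbers of Proposition~\ref{prop:hochschild_clo_ranks}, finishing with a longer chain of binomial manipulations (trinomial revision and evaluations of $\sum_k k\binom{m}{k}$).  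You instead substitute $x=y=z$ into Corollary~\ref{cor:hochschild_fh_triangle}, correctly obtaining $(2z+1)^{n-2}\bigl((n+3)z^{2}+(n+3)z+1\bigr)$, and read off the coefficient directly; your two identities $\binom{n}{i}=\binom{n-2}{n-i-2}+2\binom{n-2}{n-i-1}+\binom{n-2}{n-i}$ and $i(i-1)\binom{n}{i}=n(n-1)\binom{n-2}{n-i}$ both check out and do yield the stated closed form.  Your route is shorter given that Corollary~\ref{cor:hochschild_fh_triangle} is already in hand; the paper's route is independent of the product formula for $F_{n}$ and exposes the intermediate combinatorial identity expressing the face numbers as a binomial transform of the rank numbers of the core label order, which is of some interest in its own right.
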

\begin{proof}
	Let $f_{i}$ denote the desired number, and let
	\begin{displaymath}
		f_{n}(x) = \sum_{i=0}^{n}f_{i}x^{i}.
	\end{displaymath}
	Then, by Proposition~\ref{prop:hochschild_cp_f_triangle_combin}, we have 
	\begin{align*}
		x^{n}F\left(\frac{1}{x},\frac{1}{x}\right) & = x^{n}\sum_{(\ufr,A)\in\CP(\Hoch(n))}\left(\frac{1}{x}\right)^{n-\lvert A\rvert-\tilde{\neg}(\ufr,A)}\left(\frac{1}{x}\right)^{\tilde{\neg}(\ufr,A)}\\
		& = \sum_{(\ufr,A)\in\CP(\Hoch(n))}x^{\lvert A\rvert}\\
		& = f_{n}(x).
	\end{align*}
	But this means precisely, that 
	\begin{displaymath}
		f_{i} = \sum_{\ufr\in\Tri(n)}\sum_{\substack{A\subseteq\Cov_{\downarrow}(\ufr),\\\lvert A\rvert=i}}1.
	\end{displaymath}
	By Corollary~\ref{cor:hochschild_covers_clo_rank}, for $\ufr\in\Tri(n)$, the cardinality of $\Cov_{\downarrow}(\ufr)$ equals the rank of $\ufr$ in $\CLO\bigl(\Hoch(n)\bigr)$.  Thus, by Proposition~\ref{prop:hochschild_clo_ranks}, we obtain
	\begin{align*}
		f_{i} & = \sum_{k=0}^{n}\binom{k}{i}\left(\binom{n}{k}+(n-k)\binom{n-1}{k-1}\right)\\
		& = \sum_{k=i}^{n}\binom{k}{i}\binom{n}{k} + \sum_{k=i}^{n}k\binom{k}{i}\binom{n-1}{k}\\
		& \overset{(*)}{=} \sum_{k=i}^{n}\binom{n}{i}\binom{n-i}{k-i} + \sum_{k=i}^{n}k\binom{n-1}{i}\binom{n-1-i}{k-i}\\
		& = \binom{n}{i}\sum_{k=0}^{n-i}\binom{n-i}{k} + \binom{n-1}{i}\sum_{k=0}^{n-i}(k+i)\binom{n-1-i}{k}\\
		& = \binom{n}{i}2^{n-i} + \binom{n-1}{i}\sum_{k=0}^{n-1-i}k\binom{n-1-i}{k}+i\binom{n-1}{i}\sum_{k=0}^{n-1-i}\binom{n-1-i}{k}\\
		& = \binom{n}{i}2^{n-i} + \binom{n-1}{i}(n-1-i)2^{n-2-i}+i\binom{n-1}{i}2^{n-i-1}\\
		& = \binom{n}{i}2^{n-i} + 2^{n-2-i}\binom{n-1}{i}\Bigl(n-1+i\Bigr)\\
		& = \binom{n}{i}2^{n-i} + 2^{n-2-i}\binom{n}{i}\frac{n^{2}-n+i-i^{2}}{n}\\
		& = \binom{n}{i}2^{n-i-2}\frac{n(n+3)-i(i-1)}{n},
	\end{align*}
	where $(*)$ follows from the ``trinomial revision'' $\binom{n}{k}\binom{k}{i}=\binom{n}{i}\binom{n-i}{k-i}$.
\end{proof}

\begin{corollary}\label{cor:freehedron_faces}
	For $n>0$, the number of faces of $\Free(n)$ of dimension $i$ is
	\begin{displaymath}
		2^{n-i-2}\binom{n}{i}\frac{n(n+3)-i(i-1)}{n}.
	\end{displaymath}
\end{corollary}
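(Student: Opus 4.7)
The plan is to derive this directly from the bijection between nonempty faces of $\Free(n)$ and elements of $\CP\bigl(\Hoch(n)\bigr)$ sketched in the paragraph just above Proposition~\ref{prop:hochschild_cp_faces}, combined with the enumeration established in that proposition. Recall that the acyclic orientation of the $1$-skeleton of $\Free(n)$ encoding $\Hoch(n)$ endows each face $F$ with a unique sink $b$ and a subset $B\subseteq\Cov_{\downarrow}(b)$ consisting of the predecessors of $b$ within $F$, and the assignment $F\mapsto(b,B)$ is a bijection onto $\CP\bigl(\Hoch(n)\bigr)$.

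The key step is to check that, under this correspondence, the $i$-dimensional faces of $\Free(n)$ map bijectively to pairs $(\ufr,A)\in\CP\bigl(\Hoch(n)\bigr)$ with $\lvert A\rvert=i$. This will follow from the simplicity of $\Free(n)$: the $n$-cube is a simple polytope, truncation preserves simplicity, and hence each vertex of $\Free(n)$ (in particular the sink of $F$) lies on exactly $\dim F$ edges of $F$. Those edges are precisely the covers $(b',b)\in\Covers\bigl(\Hoch(n)\bigr)$ with $b'\in B$, so $\lvert B\rvert=\dim F$. Once this identification is in place, a direct appeal to Proposition~\ref{prop:hochschild_cp_faces} with $i$ in place of $\lvert A\rvert$ gives the claimed closed form.

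The main obstacle is the justification that $\lvert B\rvert=\dim F$, since all the substantive counting work has already been packaged into Proposition~\ref{prop:hochschild_cp_faces}. If one does not want to invoke simplicity of $\Free(n)$ as a black box, a reasonable fallback is to argue inductively along the two interval doublings that produce $\Hoch(n)$ from $\Hoch(n-1)$ (as recalled at the end of Section~\ref{sec:triwords}); these doublings correspond geometrically to the truncation steps from $\Free(n-1)$ to $\Free(n)$, and one can track how faces, sinks, and predecessor sets transform at each doubling. A cleaner alternative, if available, is to cite a general fact that for a simple polytope equipped with a generic linear functional, the faces are enumerated by pairs (sink, set of in-edges at the sink) with dimension equal to the number of in-edges. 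Either route reduces the corollary to the already established Proposition~\ref{prop:hochschild_cp_faces}.
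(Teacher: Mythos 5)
Your proposal is correct and follows essentially the same route as the paper, which states the corollary without separate proof as an immediate consequence of Proposition~\ref{prop:hochschild_cp_faces} via the bijection between nonempty faces of $\Free(n)$ and partial cores described just before that proposition. The only addition is that you explicitly justify $\dim F=\lvert B\rvert$ using the simplicity of $\Free(n)$, a detail the paper leaves implicit; that justification is sound, since truncation preserves simplicity and the sink of a $d$-dimensional face of a simple polytope meets exactly $d$ edges of that face, all of which are in-edges.
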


\subsection{Two combinatorial realizations of $H_{n}(x,y)$}
	\label{sec:hochschild_h_triangle}
By Corollary~\ref{cor:hochschild_fh_triangle}, we observe that
\begin{displaymath}
	H_{n}(x,1) = (x+1)^{n-2}\bigl(x^{2}+(n+1)x+1\bigr) = r_{n}(x).
\end{displaymath}
Therefore, we might expect that $H_{n}(x,y)$ can be realized using a refined rank-enumeration in $\CLO\bigl(\Hoch(n)\bigr)$.  By Corollary~\ref{cor:hochschild_covers_clo_rank}, the rank of $\ufr$ in $\CLO\bigl(\Hoch(n)\bigr)$ corresponds to the size of the canonical join representation of $\ufr$ in $\Hoch(n)$.  Using the partition of $\Can(\ufr)$ into atoms and non-atoms from the previous section suggests the following definition:
\begin{displaymath}
	\tilde{H}_{n}(x,y) \defs \sum_{\ufr\in\Tri(n)}x^{\lvert\Can(\ufr)\rvert}y^{\neg(\ufr)}.
\end{displaymath}

\begin{proposition}\label{prop:hochschild_h_triangle_combin}
	For $n>0$, it holds that $H_{n}(x,y) = \tilde{H}_{n}(x,y)$.
\end{proposition}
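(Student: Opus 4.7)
The plan is to compute $\tilde{H}_n(x,y)$ directly by stratifying the sum over $\Tri(n)$ according to the value of $l_1(\ufr)$, and then to match the result against the closed form for $H_n(x,y)$ given in Corollary~\ref{cor:hochschild_fh_triangle}. The main input is the explicit description of $\Can(\ufr)$ from Proposition~\ref{prop:hochschild_canonical_joinrep}, together with the observation that $\Atom(n) = \{\afr^{(1)}, \bfr^{(2)}, \ldots, \bfr^{(n)}\}$: the join-irreducible $\afr^{(i)}$ is an atom precisely when $i = 1$, whereas every $\bfr^{(i)}$ with $i \geq 2$ is an atom.

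First I will determine, for $\ufr \in \Tri(n)$ with exactly $k$ entries equal to $2$, the values of $\lvert\Can(\ufr)\rvert$ and $\neg(\ufr)$ in each of three cases. When $l_1(\ufr) = 0$, $\Can(\ufr)$ consists only of $\bfr$-type atoms, giving $\lvert\Can(\ufr)\rvert = \neg(\ufr) = k$. When $l_1(\ufr) = 1$, the additional element $\afr^{(1)} \in \Can(\ufr)$ is an atom, yielding $\lvert\Can(\ufr)\rvert = \neg(\ufr) = k+1$. When $l_1(\ufr) > 1$, the element $\afr^{(l_1(\ufr))} \in \Can(\ufr)$ is \emph{not} an atom, so $\lvert\Can(\ufr)\rvert = k+1$ while $\neg(\ufr) = k$.

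Second, I will count triwords in each stratum by $k$. When $l_1(\ufr) = 0$, \eqref{it:tri2} forces $u_1 = 0$, and the remaining $n-1$ positions are chosen independently from $\{0, 2\}$, so there are $\binom{n-1}{k}$ such triwords. When $l_1(\ufr) = 1$, we have $u_1 = 1$, and positions $2, \ldots, n$ are chosen independently from $\{0, 2\}$ (the 01-avoidance being vacuous), again giving $\binom{n-1}{k}$. When $l_1(\ufr) = l$ with $l > 1$, \eqref{it:tri3} combined with \eqref{it:tri2} forces $u_1 = 1$, $u_j \in \{1, 2\}$ for $1 < j < l$, $u_l = 1$, and $u_j \in \{0, 2\}$ for $j > l$; distributing the $k$ twos among the $(l-2) + (n-l) = n-2$ free positions gives $\binom{n-2}{k}$ such triwords for each $l \in \{2, \ldots, n\}$.

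Combining these contributions yields
\begin{align*}
	\tilde{H}_n(x,y) &= \sum_{k=0}^{n-1}\binom{n-1}{k}(xy)^k + \sum_{k=0}^{n-1}\binom{n-1}{k}(xy)^{k+1} + (n-1)\sum_{k=0}^{n-2}\binom{n-2}{k}x^{k+1}y^k\\
	&= (1+xy)(xy+1)^{n-1} + (n-1)x(xy+1)^{n-2}\\
	&= (xy+1)^{n-2}\bigl((xy+1)^2 + (n-1)x\bigr),
\end{align*}
which agrees with $H_n(x,y)$ by Corollary~\ref{cor:hochschild_fh_triangle}. The only delicate point is the structural analysis in the case $l_1(\ufr) > 1$: one must carefully combine \eqref{it:tri2} and \eqref{it:tri3} to verify that the free positions split cleanly into a ``values in $\{1,2\}$'' prefix and a ``values in $\{0,2\}$'' suffix, so that the count of $2$s reduces to a single binomial over $n-2$ independent binary choices.
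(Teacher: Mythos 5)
Your proof is correct and follows essentially the same route as the paper: stratify the triwords by $l_1(\ufr)$ and the number of $2$s, read off $\lvert\Can(\ufr)\rvert$ and $\neg(\ufr)$ from Proposition~\ref{prop:hochschild_canonical_joinrep}, count each stratum, and compare with the closed form of $H_n(x,y)$. The only (immaterial) differences are that the paper matches coefficients $h_{n,k,l}$ rather than resumming the generating function, and cites the counts already established in the proof of Proposition~\ref{prop:hochschild_clo_ranks} instead of rederiving them.
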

\begin{proof}
	By Corollary~\ref{cor:hochschild_fh_triangle}, we notice that the coefficient of $x^{k}y^{l}$ in $H_{n}(x,y)$ is
	\begin{displaymath}
		h_{n,k,l} = \begin{cases}
			\binom{n}{k}, & \text{if}\;k=l,\\
			(n-1)\binom{n-2}{k-1}, & \text{if}\;k=l+1,\\
			0, & \text{otherwise}.
		\end{cases}
	\end{displaymath}
	Now, let $\tilde{h}_{n,k,l}$ denote the coefficient of $x^{k}y^{l}$ in $\tilde{H}_{n}(x,y)$.  Let $\ufr\in\Tri(n)$ such that $\lvert\Can(\ufr)\rvert=k$.  If $l_{1}(\ufr)\leq 1$, then Proposition~\ref{prop:hochschild_canonical_joinrep} implies $\Can(\ufr)\subseteq\Atom(n)$.  Hence, $\ufr$ contributes to the coefficient $\tilde{h}_{n,k,k}$ and by Proposition~\ref{prop:hochschild_clo_ranks} there are $\binom{n}{k}$ such triwords.  If $l_{1}(\ufr)>1$, then $\bigl\lvert\Can(\ufr)\cap\Atom(n)\bigr\rvert=k-1$, and again by Proposition~\ref{prop:hochschild_clo_ranks} there are $(n-1)\binom{n-2}{k-1}$ ways for such a triword.  It follows that $\tilde{h}_{n,k,k-1}=h_{n,k,k-1}$.  Moreover, if $l\notin\{k{-}1,k\}$, then $\tilde{h}_{n,k,l}=0$.  
	
	We conclude that $h_{n,k,l}=\tilde{h}_{n,k,l}$ for all $k,l$ and thus $H_{n}(x,y)=\tilde{H}_{n}(x,y)$.
\end{proof}

\begin{example}\label{ex:hochschild_3_h_triangle_1}
	Using the values computed in Example~\ref{ex:hochschild_3_f_triangle_1}, we see that
	\begin{align*}
		\tilde{H}_{3}(x,y) & = 1 + 3xy + 3x^{2}y^{2} + x^{3}y^{3} + 2x + 2x^{2}y\\
		& = (xy+1)\bigl((xy+1)^{2}+2x\bigr)\\
		& = H_{3}(x,y).
	\end{align*}
\end{example}

The second realization of $H_{n}(x,y)$ is rather surprising.  The componentwise order on the join-irreducible triwords constitutes the disjoint union of an $n$-chain and an $(n{-}1)$-antichain.  

\begin{figure}
	\centering
	\begin{subfigure}[t]{.2\textwidth}
		\centering
		\begin{tikzpicture}\small
			\def\x{.8};
			\def\y{1};
			\draw(1*\x,1*\y) node(a1){$\afr^{(1)}$};
			\draw(1*\x,2*\y) node(a2){$\afr^{(2)}$};
			\draw(1*\x,3*\y) node(a3){$\afr^{(3)}$};
			\draw(1*\x,4*\y) node(a4){$\afr^{(4)}$};
			\draw(2*\x,1*\y) node(b2){$\bfr^{(2)}$};
			\draw(3*\x,1*\y) node(b3){$\bfr^{(3)}$};
			\draw(4*\x,1*\y) node(b4){$\bfr^{(4)}$};
			\draw(a1) -- (a2) -- (a3) -- (a4);
			\draw(b2) -- (a2);
		\end{tikzpicture}
		\caption{The poset $\Jb_{4}$.}
		\label{fig:extended_irreducible_poset_4}
	\end{subfigure}
	\hspace*{1cm}
	\begin{subfigure}[t]{.6\textwidth}
		\centering
		\begin{tikzpicture}\small
			\def\x{1.3};
			\def\y{1.5};
			\draw(1*\x,5*\y) node{\idls{}{$1$}{.5}};
			\draw(2*\x,5*\y) node{\idls{1/1}{$xy$}{.5}};
			\draw(3*\x,5*\y) node{\idls{2/1}{$xy$}{.5}};
			\draw(4*\x,5*\y) node{\idls{3/1}{$xy$}{.5}};
			\draw(5*\x,5*\y) node{\idls{4/1}{$xy$}{.5}};
			\draw(6*\x,5*\y) node{\idls{1/1,2/1}{$x^{2}y^{2}$}{.5}};
			\draw(1*\x,4*\y) node{\idls{1/1,3/1}{$x^{2}y^{2}$}{.5}};
			\draw(2*\x,4*\y) node{\idls{1/1,4/1}{$x^{2}y^{2}$}{.5}};
			\draw(3*\x,4*\y) node{\idls{2/1,3/1}{$x^{2}y^{2}$}{.5}};
			\draw(4*\x,4*\y) node{\idls{2/1,4/1}{$x^{2}y^{2}$}{.5}};
			\draw(5*\x,4*\y) node{\idls{3/1,4/1}{$x^{2}y^{2}$}{.5}};
			\draw(6*\x,4*\y) node{\idls{1/1,2/1,3/1}{$x^{3}y^{3}$}{.5}};
			\draw(1*\x,3*\y) node{\idls{1/1,2/1,4/1}{$x^{3}y^{3}$}{.5}};
			\draw(2*\x,3*\y) node{\idls{1/1,3/1,4/1}{$x^{3}y^{3}$}{.5}};
			\draw(3*\x,3*\y) node{\idls{2/1,3/1,4/1}{$x^{3}y^{3}$}{.5}};
			\draw(4*\x,3*\y) node{\idls{1/1,2/1,3/1,4/1}{$x^{4}y^{4}$}{.5}};
			\draw(5*\x,3*\y) node{\idls{1/2}{$x$}{.5}};
			\draw(6*\x,3*\y) node{\idls{1/2,3/1}{$x^{2}y$}{.5}};
			\draw(1*\x,2*\y) node{\idls{1/2,4/1}{$x^{2}y$}{.5}};
			\draw(2*\x,2*\y) node{\idls{1/2,3/1,4/1}{$x^{3}y^{2}$}{.5}};
			\draw(3*\x,2*\y) node{\idls{1/3}{$x$}{.5}};
			\draw(4*\x,2*\y) node{\idls{1/3,3/1}{$x^{2}y$}{.5}};
			\draw(5*\x,2*\y) node{\idls{1/3,4/1}{$x^{2}y$}{.5}};
			\draw(6*\x,2*\y) node{\idls{1/3,3/1,4/1}{$x^{3}y^{2}$}{.5}};
			\draw(1*\x,1*\y) node{\idls{1/4}{$x$}{.5}};
			\draw(2*\x,1*\y) node{\idls{1/4,3/1}{$x^{2}y$}{.5}};
			\draw(3*\x,1*\y) node{\idls{1/4,4/1}{$x^{2}y$}{.5}};
			\draw(4*\x,1*\y) node{\idls{1/4,3/1,4/1}{$x^{3}y^{2}$}{.5}};
		\end{tikzpicture}
		\caption{The antichains of $\Jb_{4}$ together with the term they contribute to $\tilde{H}_{\Jb_{4}}(x,y)$.  Minimal elements per antichain are marked in red.}
		\label{fig:antichains_4}
	\end{subfigure}
	\caption{Illustrating the combinatorial realization of $H_{\CLO\bigl(\Hoch(4)\bigr)}(x,y)$.}
	\label{fig:hochschild_h_triangle_combin_2}
\end{figure}

Let $\Jb_{n}$ denote the poset obtained from $\Bigl(\JI\bigl(\Hoch(n)\bigr),\comp\Bigr)$ by adding the relations $(\bfr^{(2)},\afr^{(i)})$ for $i>1$.  See Figure~\ref{fig:extended_irreducible_poset_4} for an illustration of $\Jb_{4}$.  Let $\AC(n)$ denote the set of antichains of $\Jb_{n}$.

\begin{proposition}\label{prop:hochschild_h_triangle_anti}
	For $n>0$,
	\begin{displaymath}
		H_{n}(x,y) = \sum_{A\in\AC(n)}x^{\lvert A\rvert}y^{\lvert A\cap\Atom(n)\rvert}.
	\end{displaymath}
\end{proposition}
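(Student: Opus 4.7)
The plan is to enumerate the antichains of $\Jb_{n}$ directly and verify that the resulting generating function matches the closed form of $H_{n}(x,y)$ given in Corollary~\ref{cor:hochschild_fh_triangle}. First, I describe the structure of $\Jb_{n}$. In the componentwise order on $\JI\bigl(\Hoch(n)\bigr)$, the elements $\afr^{(1)}\compless\afr^{(2)}\compless\cdots\compless\afr^{(n)}$ form a chain, while $\bfr^{(2)},\bfr^{(3)},\ldots,\bfr^{(n)}$ are pairwise incomparable and incomparable to every $\afr^{(j)}$. Adjoining the relations $(\bfr^{(2)},\afr^{(i)})$ for $i>1$ makes $\bfr^{(2)}$ strictly less than $\afr^{(2)},\ldots,\afr^{(n)}$ but keeps it incomparable to $\afr^{(1)}$, while $\bfr^{(3)},\ldots,\bfr^{(n)}$ remain isolated vertices of $\Jb_{n}$.

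Consequently, every antichain $A\in\AC(n)$ decomposes uniquely as $A=B\uplus C$, where $B\subseteq\bigl\{\bfr^{(3)},\ldots,\bfr^{(n)}\bigr\}$ is arbitrary and $C$ is an antichain of the induced subposet on $\bigl\{\afr^{(1)},\afr^{(2)},\ldots,\afr^{(n)},\bfr^{(2)}\bigr\}$. A brief case analysis shows that $C$ must be one of $\emptyset$, a singleton $\{\afr^{(i)}\}$ for some $i\in[n]$, the singleton $\{\bfr^{(2)}\}$, or the pair $\{\afr^{(1)},\bfr^{(2)}\}$; the last option is the only way to obtain a two-element antichain, because $\afr^{(1)}$ is the unique $\afr^{(i)}$ that fails to dominate $\bfr^{(2)}$ in $\Jb_{n}$.

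Since $\Atom(n)=\bigl\{\afr^{(1)},\bfr^{(2)},\ldots,\bfr^{(n)}\bigr\}$, every element of $B$ is an atom, so summing $x^{\lvert B\rvert}y^{\lvert B\rvert}$ over all $B\subseteq\bigl\{\bfr^{(3)},\ldots,\bfr^{(n)}\bigr\}$ yields a factor of $(1+xy)^{n-2}$. For $C$, the five cases contribute, respectively, $1$, $xy$, $(n-1)x$ (summed over $i=2,3,\ldots,n$), $xy$ and $x^{2}y^{2}$, totaling $(1+xy)^{2}+(n-1)x$. Multiplying yields
\[
\sum_{A\in\AC(n)}x^{\lvert A\rvert}y^{\lvert A\cap\Atom(n)\rvert} = (xy+1)^{n-2}\Bigl((xy+1)^{2}+(n-1)x\Bigr),
\]
which is precisely $H_{n}(x,y)$ by Corollary~\ref{cor:hochschild_fh_triangle}.

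No serious obstacle arises in this argument: the only point requiring care is the enumeration of antichains of the ``V-shape'' subposet on $\bigl\{\afr^{(1)},\ldots,\afr^{(n)},\bfr^{(2)}\bigr\}$, and in particular the observation that the only two-element antichain there is $\{\afr^{(1)},\bfr^{(2)}\}$.
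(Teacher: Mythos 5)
Your proof is correct and rests on the same combinatorial core as the paper's: the classification of antichains of $\Jb_{n}$ into those consisting solely of atoms and those containing exactly one $\afr^{(i)}$ with $i>1$ together with a subset of $\bigl\{\bfr^{(3)},\ldots,\bfr^{(n)}\bigr\}$. The only difference is presentational --- you package the count as a product of generating functions $(xy+1)^{n-2}\bigl((xy+1)^{2}+(n-1)x\bigr)$ and compare with the closed form in Corollary~\ref{cor:hochschild_fh_triangle}, whereas the paper compares the antichain counts coefficient by coefficient with the numbers $h_{n,k,l}$ extracted in the proof of Proposition~\ref{prop:hochschild_h_triangle_combin}.
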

\begin{proof}
	Let $A\in\AC(n)$ with $\lvert A\rvert=k$, and let $\bigl\lvert A\cap\Atom(n)\bigr\rvert=l$.  By the shape of $\Jb_{n}$ it is clear that $l\in\{k,k-1\}$.  If $l=k$, then there are $\binom{n}{k}$ possible choices for $A$.  If $l=k-1$, then $A$ contains neither $\afr^{(1)}$ nor $\bfr^{(2)}$, but has to contain $\afr^{(i)}$ for $i>1$.  Consequently, there are $(n-1)\binom{n-2}{k-1}$ possible choices for $A$.  As observed in the proof of Proposition~\ref{prop:hochschild_h_triangle_combin}, the number of such antichains equals the coefficient of $x^{k}y^{l}$ in $H_{n}(x,y)$.
\end{proof}

\begin{example}\label{ex:hochschild_3_h_triangle_2}
	Figure~\ref{fig:antichains_4} shows the antichains of $\Jb_{4}$, where the minimal elements per antichain are circled in red.  Additionally, we have noted the term each antichain contributes to $H_{4}(x,y)$.  We obtain
	\begin{align*}
		H_{4}(x,y) & = 1 + 4xy + 6x^{2}y^{2} + 4x^{3}y^{3} + x^{4}y^{4} + 3x + 6x^{2}y + 3x^{3}y^{2}\\
		& = (xy+1)^{2}\bigl((xy+1)^{2}+3x\bigr)
	\end{align*}
	as desired.
\end{example}

\begin{corollary}
	For $n>0$, the number of antichains of $\Jb_{n}$ is $2^{n-2}(n+3)$.
\end{corollary}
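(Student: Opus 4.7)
The plan is to simply evaluate the identities for $H_n(x,y)$ at $x=y=1$, where both sides collapse to pure counts. Concretely, Proposition~\ref{prop:hochschild_h_triangle_anti} gives $H_n(1,1)=\sum_{A\in\AC(n)}1=\lvert\AC(n)\rvert$, while Proposition~\ref{prop:hochschild_h_triangle_combin} gives $H_n(1,1)=\tilde H_n(1,1)=\sum_{\ufr\in\Tri(n)}1=\lvert\Tri(n)\rvert$. Chaining these identities with Proposition~\ref{prop:triwords_size} yields
\begin{displaymath}
	\lvert\AC(n)\rvert \;=\; H_n(1,1) \;=\; \lvert\Tri(n)\rvert \;=\; 2^{n-2}(n+3),
\end{displaymath}
which is the claim. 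As a sanity check I would also substitute directly into the closed formula from Corollary~\ref{cor:hochschild_fh_triangle}: $H_n(1,1) = (1\cdot 1+1)^{n-2}\bigl((1\cdot 1+1)^2 + (n-1)\bigr) = 2^{n-2}(n+3)$, matching.

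For readers who prefer a purely combinatorial verification, a direct count is also available via the explicit structure of $\Jb_n$: the $\afr^{(i)}$ form a chain, the $\bfr^{(j)}$ form an antichain, and the only comparabilities between the two blocks are the added relations $\bfr^{(2)}<\afr^{(i)}$ for $i\geq 2$. Splitting antichains of $\Jb_n$ according to which $\afr^{(i)}$ they contain, one obtains $2^{n-1}$ antichains containing no $\afr^{(i)}$ (arbitrary subsets of the $\bfr^{(j)}$'s), another $2^{n-1}$ containing $\afr^{(1)}$ (incomparable to every $\bfr^{(j)}$), and $(n-1)\cdot 2^{n-2}$ containing some $\afr^{(i)}$ with $i\geq 2$ (forced to omit $\bfr^{(2)}$, but free on the remaining $n-2$ atoms). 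The sum $2^{n-1}+2^{n-1}+(n-1)\cdot 2^{n-2}=2^{n-2}(n+3)$ agrees.

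The main obstacle is essentially nonexistent: all the heavy lifting has already been carried out in Propositions~\ref{prop:hochschild_h_triangle_combin} and \ref{prop:hochschild_h_triangle_anti}, and in Corollary~\ref{cor:hochschild_fh_triangle}. I would present the two-line chain of identities as the proof, since it is the cleanest and highlights the striking numerical coincidence $\lvert\AC(n)\rvert=\lvert\Tri(n)\rvert$ that ties this subsection back to the very first enumerative statement of the paper.
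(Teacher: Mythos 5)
Your main argument---evaluating Propositions~\ref{prop:hochschild_h_triangle_combin} and \ref{prop:hochschild_h_triangle_anti} at $x=y=1$ and invoking Proposition~\ref{prop:triwords_size}---is exactly the paper's proof, and it is correct. The additional direct count of antichains in $\Jb_{n}$ is a valid bonus verification but not needed.
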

\begin{proof}
	This follows from Proposition~\ref{prop:triwords_size} by plugging in $x=y=1$ in Propositions~\ref{prop:hochschild_h_triangle_combin} and \ref{prop:hochschild_h_triangle_anti} and 
\end{proof}

\section{Open questions}
	\label{sec:open_questions}
\subsection{Shuffle lattices as core label orders}
	
By construction, we have $\ShufflePoset(n,0)\cong\Bool(n)$ and by \cite{muehle19the}*{Theorem~1.5}, $\CLO\bigl(\Bool(n)\bigr)\cong\Bool(n)$.  In Theorem~\ref{thm:hochschild_clo_shuffleposet} we have shown that $\CLO\bigl(\Hoch(n)\bigr)\cong\ShufflePoset(n-1,1)$.

Is there another family of semidistributive lattices, depending on parameters $n$ and $a$ whose core label orders realize $\ShufflePoset(n-a,a)$ for $a\geq 2$?

More precisely, the poset diagrams of both $\Bool(n)$ and $\Hoch(n)$ correspond to the (oriented) $1$-skeletons of the $n$-cube and the $n$-dimensional freehedron of \cites{rivera18combinatorial,saneblidze09bitwisted}, respectively.  Is there a family of polytopes or cell complexes, whose $1$-skeletons can be oriented such that one obtains extremal, congruence-uniform lattices whose core label orders realize $\ShufflePoset(n-a,a)$ for $a\geq 2$?

\subsection{Posets of join-irreducibles and $H$-triangles}
	\label{sec:h_triangle_irreducibles}
There is another family of lattices exhibiting a behavior similar to $\Hoch(n)$.  The \defn{Tamari lattice} $\Tamari(n)$ is a poset defined by a certain rotation transformation on the set of full binary trees with $n$ internal nodes~\cite{tamari51monoides}.  It was shown in \cites{geyer94on,markowsky92primes} that $\Tamari(n)$ is a congruence-uniform and extremal lattice, and its core label order is isomorphic to the lattice of noncrossing set partitions of $[n]$~\cite{reading11noncrossing}.  

The $M$-triangle associated with $\CLO\bigl(\Tamari(n)\bigr)$ was computed in \cite{athanasiadis07on}, and the corresponding $H$- and $F$-triangles were explained combinatorially in \cites{athanasiadis07on,chapoton06sur,thiel14on}.  Remarkably, the $H$-triangle can be realized analogously to Proposition~\ref{prop:hochschild_h_triangle_anti}, where antichains are taken in a triangular poset $\mathbf{T}_{n}$ with $\binom{n}{2}$ elements~\cite{chapoton06sur}.  The poset of join-irreducible elements of $\Tamari(n)$ is isomorphic to the disjoint union of $n-1$ chains of lengths $1,2,\ldots,n-1$, respectively~\cite{bennett94two}.  The triangular poset $\mathbf{T}_{n}$ is clearly an order extension of $\JIPoset\bigl(\Tamari(n)\bigr)$.  

Figure~\ref{fig:polytopes_lattices} illustrates this connection on the Boolean lattice, the Hochschild lattice and the Tamari lattice.

Can we find other families of semidistributive lattices $\bigl\{\Lattice_{n}\mid n\in\mathbb{N}\bigr\}$, such that the $H$-triangle, arising from the $M$-triangle of $\CLO(\Lattice)$, can be realized via a refined antichain enumeration in some order extension $\Poset_{n}$ of the poset of join-irreducibles of $\Lattice_{n}$ such that $\bigl\lvert\AC(\Poset_{n})\bigr\rvert=\bigl\lvert L_{n}\bigr\rvert$?

\begin{landscape}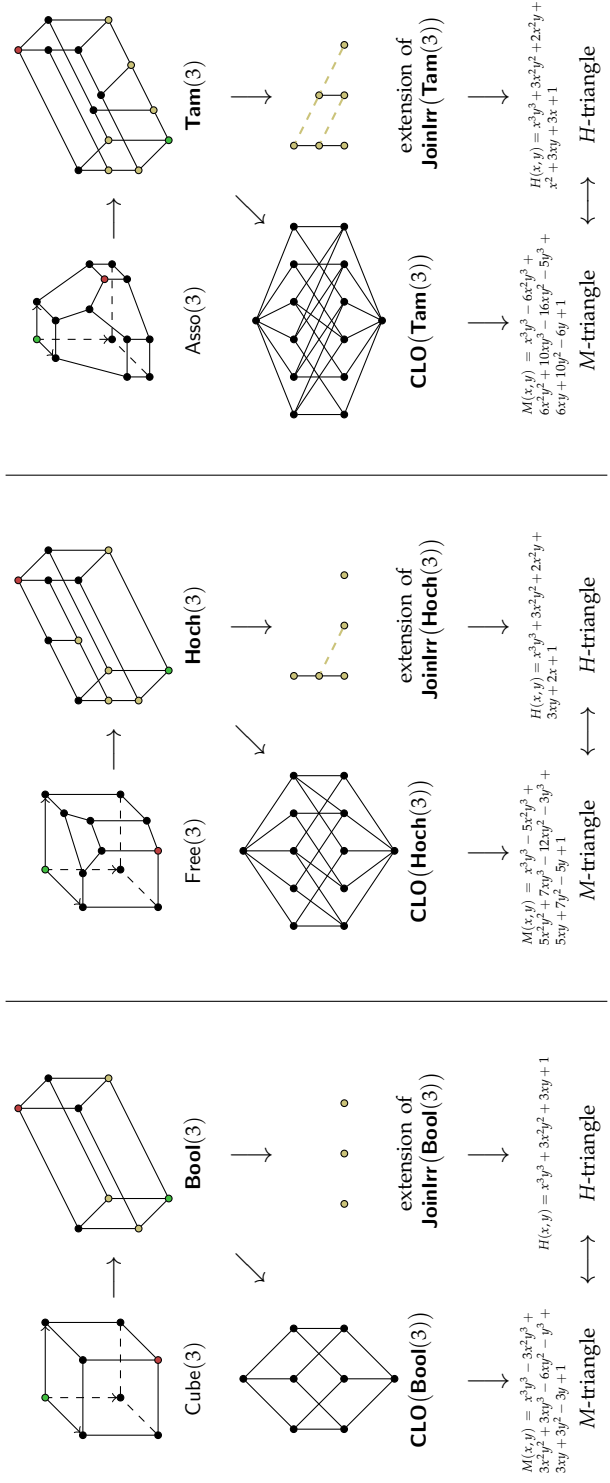
\begin{figure}[p]
	\centering
	\begin{tikzpicture}
		\draw(5,8) node{};
		\draw(4.5,-3) -- (4.5,5);
		\draw(11.5,-3) -- (11.5,5);
		\draw(1,1) node{\begin{tikzpicture}
				\def\dx{3};
				\def\dy{3};
				\def\s{.3};
				\draw(1*\dx,3*\dy) node{\begin{tikzpicture}\small
					\def\x{1};
					\def\y{1};
					\draw(1*\x,1*\y) node[fnode](n1){};
					\draw(2*\x,1*\y) node[fnode,fill=red!50!gray](n2){};
					\draw(1.5*\x,1.5*\y) node[fnode](n3){};
					\draw(2.5*\x,1.5*\y) node[fnode](n4){};
					\draw(1*\x,2*\y) node[fnode](n5){};
					\draw(2*\x,2*\y) node[fnode](n6){};
					\draw(1.5*\x,2.5*\y) node[fnode,fill=green!50!gray](n7){};
					\draw(2.5*\x,2.5*\y) node[fnode](n8){};
					\draw(2*\x,2.67*\y) node{};
					\draw(n1) -- (n2);
					\draw[dashed](n1) -- (n3);
					\draw(n1) -- (n5);
					\draw(n2) -- (n4);
					\draw(n2) -- (n6);
					\draw[dashed](n3) -- (n4);
					\draw[<-,dashed](n3) -- (n7);
					\draw(n4) -- (n8);
					\draw(n5) -- (n6);
					\draw[<-](n5) -- (n7);
					\draw(n6) -- (n8);
					\draw[->](n7) -- (n8);
				\end{tikzpicture}};
				\draw(1*\dx,2.55*\dy) node[scale=.8]{$\mathsf{Cube}(3)$};
				\draw(2*\dx,3*\dy) node{\begin{tikzpicture}\small
					\def\x{.4};
					\def\y{.4};
					\draw(3*\x,1*\y) node[fnode,fill=green!50!gray](n1){};
					\draw(2*\x,2*\y) node[fnode,fill=yellow!50!gray](n2){};
					\draw(3*\x,3*\y) node[fnode,fill=yellow!50!gray](n3){};
					\draw(7*\x,3*\y) node[fnode,fill=yellow!50!gray](n4){};
					\draw(2*\x,4*\y) node[fnode](n5){};
					\draw(6*\x,4*\y) node[fnode](n6){};
					\draw(7*\x,5*\y) node[fnode](n7){};
					\draw(6*\x,6*\y) node[fnode,fill=red!50!gray](n8){};
					\draw(n1) -- (n2);
					\draw(n1) -- (n3);
					\draw(n1) -- (n4);
					\draw(n2) -- (n5);
					\draw(n2) -- (n6);
					\draw(n3) -- (n5);
					\draw(n3) -- (n7);
					\draw(n4) -- (n6);
					\draw(n4) -- (n7);
					\draw(n5) -- (n8);
					\draw(n6) -- (n8);
					\draw(n7) -- (n8);
				\end{tikzpicture}};
				\draw(2*\dx,2.55*\dy) node[scale=.8]{$\Bool(3)$};
				\draw(1*\dx,2*\dy) node{\begin{tikzpicture}\small
					\def\x{.67};
					\def\y{.67};
					\draw(2*\x,1*\y) node[fnode](n1){};
					\draw(1*\x,2*\y) node[fnode](n2){};
					\draw(2*\x,2*\y) node[fnode](n3){};
					\draw(3*\x,2*\y) node[fnode](n4){};
					\draw(1*\x,3*\y) node[fnode](n5){};
					\draw(2*\x,3*\y) node[fnode](n6){};
					\draw(3*\x,3*\y) node[fnode](n7){};
					\draw(2*\x,4*\y) node[fnode](n8){};
					\draw(n1) -- (n2);
					\draw(n1) -- (n3);
					\draw(n1) -- (n4);
					\draw(n2) -- (n5);
					\draw(n2) -- (n6);
					\draw(n3) -- (n5);
					\draw(n3) -- (n7);
					\draw(n4) -- (n6);
					\draw(n4) -- (n7);
					\draw(n5) -- (n8);
					\draw(n6) -- (n8);
					\draw(n7) -- (n8);
				\end{tikzpicture}};
				\draw(1*\dx,1.55*\dy) node[scale=.8]{$\CLO\bigl(\Bool(3)\bigr)$};
				\draw(2*\dx,2*\dy) node{\begin{tikzpicture}\small
					\def\x{.67};
					\def\y{.67};
					\draw(1*\x,1*\y) node{};
					\draw(1*\x,2*\y) node[fnode,fill=yellow!50!gray](n1){};
					\draw(2*\x,2*\y) node[fnode,fill=yellow!50!gray](n2){};
					\draw(3*\x,2*\y) node[fnode,fill=yellow!50!gray](n3){};
					\draw(3*\x,4*\y) node{};
				\end{tikzpicture}};
				\draw(2*\dx,1.55*\dy) node[scale=.8,text width=2.5cm,text centered]{extension of $\JIPoset\bigl(\Bool(3)\bigr)$};
				\draw(1*\dx,1*\dy) node[text width=5cm,scale=.5]{$M(x,y) = x^{3}y^{3}-3x^{2}y^{3}+3x^{2}y^{2}+3xy^{3}-6xy^{2}-y^{3}+3xy+3y^{2}-3y+1$};
				\draw(1*\dx,.8*\dy) node[scale=.8]{$M$-triangle};
				\draw(2*\dx,1*\dy) node[text width=5.2cm,scale=.5]{$H(x,y) = x^{3}y^{3}+3x^{2}y^{2}+3xy+1$};
				\draw(2*\dx,.8*\dy) node[scale=.8]{$H$-triangle};
				\draw(1.53*\dy,.8*\dy) node{$\longleftrightarrow$};
				\draw(1*\dx,1.25*\dy) node[rotate=90]{$\longleftarrow$};
				\draw(2*\dx,1.25*\dy) node[rotate=90]{$\longleftarrow$};
				\draw(2*\dy,2.3*\dy) node[rotate=90]{$\longleftarrow$};
				\draw(1.5*\dy,2.3*\dy) node[rotate=45]{$\longleftarrow$};
				\draw(1.45*\dy,2.9*\dy) node{$\longrightarrow$};
			\end{tikzpicture}};
		
		\draw(8,1) node{\begin{tikzpicture}
				\def\dx{3};
				\def\dy{3};
				\def\s{.3};
				\draw(1*\dx,3*\dy) node{\begin{tikzpicture}\small
					\def\x{1};
					\def\y{1};
					\draw(1*\x,1*\y) node[fnode](n1){};
					\draw(1.75*\x,1*\y) node[fnode,fill=red!50!gray](n2){};
					\draw(1.5*\x,1.5*\y) node[fnode](n3){};
					\draw(2.15*\x,1.15*\y) node[fnode](n4){};
					\draw(2.5*\x,1.5*\y) node[fnode](n5){};
					\draw(1.75*\x,1.75*\y) node[fnode](n6){};
					\draw(2.15*\x,1.9*\y) node[fnode](n7){};
					\draw(1*\x,2*\y) node[fnode](n8){};
					\draw(1.45*\x,2*\y) node[fnode](n9){};
					\draw(2.25*\x,2.25*\y) node[fnode](n10){};
					\draw(1.5*\x,2.5*\y) node[fnode,fill=green!50!gray](n11){};
					\draw(2.5*\x,2.5*\y) node[fnode](n12){};
					\draw(2*\x,2.67*\y) node{};
					\draw(n1) -- (n2);
					\draw[dashed](n1) -- (n3);
					\draw(n1) -- (n8);
					\draw(n2) -- (n4);
					\draw(n2) -- (n6);
					\draw[dashed](n3) -- (n5);
					\draw[<-,dashed](n3) -- (n11);
					\draw(n4) -- (n5);
					\draw(n4) -- (n7);
					\draw(n5) -- (n12);
					\draw(n6) -- (n7);
					\draw(n7) -- (n10);
					\draw(n8) -- (n9);
					\draw[<-](n8) -- (n11);
					\draw(n9) -- (n6);
					\draw(n9) -- (n10);
					\draw(n10) -- (n12);
					\draw[->](n11) -- (n12);
				\end{tikzpicture}};
				\draw(1*\dx,2.55*\dy) node[scale=.8]{$\mathsf{Free}(3)$};
				\draw(2*\dx,3*\dy) node{\begin{tikzpicture}\small
					\def\x{.4};
					\def\y{.4};
					\draw(3*\x,1*\y) node[fnode,fill=green!50!gray](n1){};
					\draw(2*\x,2*\y) node[fnode,fill=yellow!50!gray](n2){};
					\draw(2*\x,3*\y) node[fnode,fill=yellow!50!gray](n3){};
					\draw(3*\x,3*\y) node[fnode,fill=yellow!50!gray](n4){};
					\draw(7*\x,3*\y) node[fnode,fill=yellow!50!gray](n5){};
					\draw(2*\x,4*\y) node[fnode](n6){};
					\draw(4*\x,4*\y) node[fnode,fill=yellow!50!gray](n7){};
					\draw(6*\x,4*\y) node[fnode](n8){};
					\draw(4*\x,5*\y) node[fnode](n9){};
					\draw(6*\x,5*\y) node[fnode](n10){};
					\draw(7*\x,5*\y) node[fnode](n11){};
					\draw(6*\x,6*\y) node[fnode,fill=red!50!gray](n12){};
					\draw(n1) -- (n2);
					\draw(n1) -- (n4);
					\draw(n1) -- (n5);
					\draw(n2) -- (n3);
					\draw(n2) -- (n8);
					\draw(n3) -- (n6);
					\draw(n3) -- (n7);
					\draw(n4) -- (n6);
					\draw(n4) -- (n11);
					\draw(n5) -- (n8);
					\draw(n5) -- (n11);
					\draw(n6) -- (n9);
					\draw(n7) -- (n9);
					\draw(n7) -- (n10);
					\draw(n8) -- (n10);
					\draw(n9) -- (n12);
					\draw(n10) -- (n12);
					\draw(n11) -- (n12);
				\end{tikzpicture}};
				\draw(2*\dx,2.55*\dy) node[scale=.8]{$\Hoch(3)$};
				\draw(1*\dx,2*\dy) node{\begin{tikzpicture}\small
					\def\x{.5};
					\def\y{.67};
					\draw(3*\x,1*\y) node[fnode](n1){};
					\draw(1*\x,2*\y) node[fnode](n2){};
					\draw(2*\x,2*\y) node[fnode](n3){};
					\draw(3*\x,2*\y) node[fnode](n4){};
					\draw(4*\x,2*\y) node[fnode](n5){};
					\draw(5*\x,2*\y) node[fnode](n6){};
					\draw(1*\x,3*\y) node[fnode](n7){};
					\draw(2*\x,3*\y) node[fnode](n8){};
					\draw(3*\x,3*\y) node[fnode](n9){};
					\draw(4*\x,3*\y) node[fnode](n10){};
					\draw(5*\x,3*\y) node[fnode](n11){};
					\draw(3*\x,4*\y) node[fnode](n12){};
					\draw(n1) -- (n2);
					\draw(n1) -- (n3);
					\draw(n1) -- (n4);
					\draw(n1) -- (n5);
					\draw(n1) -- (n6);
					\draw(n2) -- (n8);
					\draw(n2) -- (n7);
					\draw(n2) -- (n9);
					\draw(n3) -- (n9);
					\draw(n3) -- (n10);
					\draw(n4) -- (n7);
					\draw(n4) -- (n11);
					\draw(n5) -- (n8);
					\draw(n5) -- (n10);
					\draw(n5) -- (n11);
					\draw(n6) -- (n9);
					\draw(n6) -- (n11);
					\draw(n7) -- (n12);
					\draw(n8) -- (n12);
					\draw(n9) -- (n12);
					\draw(n10) -- (n12);
					\draw(n11) -- (n12);
				\end{tikzpicture}};
				\draw(1*\dx,1.55*\dy) node[scale=.8]{$\CLO\bigl(\Hoch(3)\bigr)$};
				\draw(2*\dx,2*\dy) node{\begin{tikzpicture}\small
					\def\x{.67};
					\def\y{.67};
					\draw(1*\x,1*\y) node{};
					\draw(1*\x,2*\y) node[fnode,fill=yellow!50!gray](n1){};
					\draw(2*\x,2*\y) node[fnode,fill=yellow!50!gray](n2){};
					\draw(3*\x,2*\y) node[fnode,fill=yellow!50!gray](n3){};
					\draw(1*\x,2.5*\y) node[fnode,fill=yellow!50!gray](n4){};
					\draw(1*\x,3*\y) node[fnode,fill=yellow!50!gray](n5){};
					\draw(3*\x,4*\y) node{};
					\draw(n1) -- (n4);
					\draw(n4) -- (n5);
					\draw[thick,dashed,yellow!50!gray](n2) -- (n4);
				\end{tikzpicture}};
				\draw(2*\dx,1.55*\dy) node[scale=.8,text width=2.5cm,text centered]{extension of $\JIPoset\bigl(\Hoch(3)\bigr)$};
				\draw(1*\dx,1*\dy) node[text width=5cm,scale=.5]{$M(x,y) = x^{3}y^{3}-5x^{2}y^{3}+5x^{2}y^{2}+7xy^{3}-12xy^{2}-3y^{3}+5xy+7y^{2}-5y+1$};
				\draw(1*\dx,.8*\dy) node[scale=.8]{$M$-triangle};
				\draw(2*\dx,1*\dy) node[text width=5cm,scale=.5]{$H(x,y) = x^{3}y^{3}+3x^{2}y^{2}+2x^{2}y+3xy+2x+1$};
				\draw(2*\dx,.8*\dy) node[scale=.8]{$H$-triangle};
				\draw(1.53*\dy,.8*\dy) node{$\longleftrightarrow$};
				\draw(1*\dx,1.25*\dy) node[rotate=90]{$\longleftarrow$};
				\draw(2*\dx,1.25*\dy) node[rotate=90]{$\longleftarrow$};
				\draw(2*\dy,2.3*\dy) node[rotate=90]{$\longleftarrow$};
				\draw(1.5*\dy,2.3*\dy) node[rotate=45]{$\longleftarrow$};
				\draw(1.45*\dy,2.9*\dy) node{$\longrightarrow$};
			\end{tikzpicture}};
			
		\draw(15,1) node{\begin{tikzpicture}
				\def\dx{3};
				\def\dy{3};
				\def\s{.3};
				\draw(1*\dx,3*\dy) node{\begin{tikzpicture}\small
					\def\x{1};
					\def\y{1};
					\draw(1*\x,1*\y) node[fnode](n1){};
					\draw(1.5*\x,1*\y) node[fnode](n2){};
					\draw(1*\x,1.3*\y) node[fnode](n3){};
					\draw(1.5*\x,1.3*\y) node[fnode](n4){};
					\draw(2.3*\x,1.3*\y) node[fnode](n5){};
					\draw(1.5*\x,1.5*\y) node[fnode](n6){};
					\draw(2.5*\x,1.5*\y) node[fnode](n7){};
					\draw(2.3*\x,1.6*\y) node[fnode,fill=red!50!gray](n8){};
					\draw(1.9*\x,1.8*\y) node[fnode](n9){};
					\draw(2.5*\x,1.8*\y) node[fnode](n10){};
					\draw(1.25*\x,2.25*\y) node[fnode](n11){};
					\draw(1.75*\x,2.25*\y) node[fnode](n12){};
					\draw(1.5*\x,2.5*\y) node[fnode,fill=green!50!gray](n13){};
					\draw(2*\x,2.5*\y) node[fnode](n14){};
					\draw(n1) -- (n2);
					\draw(n1) -- (n3);
					\draw(n2) -- (n4);
					\draw(n2) -- (n5);
					\draw(n3) -- (n4);
					\draw(n3) -- (n11);
					\draw(n4) -- (n9);
					\draw(n5) -- (n7);
					\draw(n5) -- (n8);
					\draw(n7) -- (n10);
					\draw(n8) -- (n9);
					\draw(n8) -- (n10);
					\draw(n9) -- (n12);
					\draw(n10) -- (n14);
					\draw(n11) -- (n12);
					\draw[<-](n11) -- (n13);
					\draw(n12) -- (n14);
					\draw[->](n13) -- (n14);
					\draw[dashed](n1) -- (n6);
					\draw[dashed](n6) -- (n7);
					\draw[<-,dashed](n6) -- (n13);
				\end{tikzpicture}};
				\draw(1*\dx,2.55*\dy) node[scale=.8]{$\mathsf{Asso}(3)$};
				\draw(2*\dx,3*\dy) node{\begin{tikzpicture}\small
					\def\x{.4};
					\def\y{.4};
					\draw(3*\x,1*\y) node[fnode,fill=green!50!gray](n1){};
					\draw(4*\x,1.5*\y) node[fnode,fill=yellow!50!gray](n2){};
					\draw(2*\x,2*\y) node[fnode,fill=yellow!50!gray](n3){};
					\draw(5.5*\x,2.25*\y) node[fnode,fill=yellow!50!gray](n4){};
					\draw(2*\x,3*\y) node[fnode,fill=yellow!50!gray](n5){};
					\draw(3*\x,3*\y) node[fnode,fill=yellow!50!gray](n6){};
					\draw(7*\x,3*\y) node[fnode,fill=yellow!50!gray](n7){};
					\draw(4.5*\x,3.25*\y) node[fnode](n8){};
					\draw(4*\x,3.5*\y) node[fnode](n9){};
					\draw(2*\x,4*\y) node[fnode](n10){};
					\draw(6*\x,4*\y) node[fnode](n11){};
					\draw(6*\x,5*\y) node[fnode](n12){};
					\draw(7*\x,5*\y) node[fnode](n13){};
					\draw(6*\x,6*\y) node[fnode,fill=red!50!gray](n14){};
					\draw(n1) -- (n2);
					\draw(n1) -- (n3);
					\draw(n1) -- (n6);
					\draw(n2) -- (n4);
					\draw(n2) -- (n9);
					\draw(n3) -- (n5);
					\draw(n3) -- (n8);
					\draw(n4) -- (n7);
					\draw(n4) -- (n8);
					\draw(n5) -- (n10);
					\draw(n5) -- (n12);
					\draw(n6) -- (n9);
					\draw(n6) -- (n10);
					\draw(n7) -- (n11);
					\draw(n7) -- (n13);
					\draw(n8) -- (n11);
					\draw(n9) -- (n13);
					\draw(n10) -- (n14);
					\draw(n13) -- (n14);
					\draw(n11) -- (n12);
					\draw(n12) -- (n14);
				\end{tikzpicture}};
				\draw(2*\dx,2.55*\dy) node[scale=.8]{$\Tamari(3)$};
				\draw(1*\dx,2*\dy) node{\begin{tikzpicture}\small
					\def\x{.5};
					\def\y{.67};
					\draw(3.5*\x,1.25*\y) node[fnode](n1){};
					\draw(1*\x,2*\y) node[fnode](n2){};
					\draw(2*\x,2*\y) node[fnode](n3){};
					\draw(3*\x,2*\y) node[fnode](n4){};
					\draw(4*\x,2*\y) node[fnode](n5){};
					\draw(5*\x,2*\y) node[fnode](n6){};
					\draw(6*\x,2*\y) node[fnode](n7){};
					\draw(1*\x,3*\y) node[fnode](n8){};
					\draw(2*\x,3*\y) node[fnode](n9){};
					\draw(3*\x,3*\y) node[fnode](n10){};
					\draw(4*\x,3*\y) node[fnode](n11){};
					\draw(5*\x,3*\y) node[fnode](n12){};
					\draw(6*\x,3*\y) node[fnode](n13){};
					\draw(3.5*\x,3.75*\y) node[fnode](n14){};
					\draw(n1) -- (n2);
					\draw(n1) -- (n3);
					\draw(n1) -- (n4);
					\draw(n1) -- (n5);
					\draw(n1) -- (n6);
					\draw(n1) -- (n7);
					\draw(n2) -- (n8);
					\draw(n2) -- (n10);
					\draw(n3) -- (n9);
					\draw(n3) -- (n11);
					\draw(n4) -- (n8);
					\draw(n4) -- (n11);
					\draw(n4) -- (n12);
					\draw(n5) -- (n8);
					\draw(n5) -- (n9);
					\draw(n5) -- (n13);
					\draw(n6) -- (n9);
					\draw(n6) -- (n10);
					\draw(n6) -- (n12);
					\draw(n7) -- (n10);
					\draw(n7) -- (n11);
					\draw(n7) -- (n13);
					\draw(n8) -- (n14);
					\draw(n9) -- (n14);
					\draw(n10) -- (n14);
					\draw(n11) -- (n14);
					\draw(n12) -- (n14);
					\draw(n13) -- (n14);
				\end{tikzpicture}};
				\draw(1*\dx,1.55*\dy) node[scale=.8]{$\CLO\bigl(\Tamari(3)\bigr)$};
				\draw(2*\dx,2*\dy) node{\begin{tikzpicture}\small
					\def\x{.67};
					\def\y{.67};
					\draw(1*\x,1*\y) node{};
					\draw(1*\x,2*\y) node[fnode,fill=yellow!50!gray](n1){};
					\draw(2*\x,2*\y) node[fnode,fill=yellow!50!gray](n2){};
					\draw(3*\x,2*\y) node[fnode,fill=yellow!50!gray](n3){};
					\draw(1*\x,2.5*\y) node[fnode,fill=yellow!50!gray](n4){};
					\draw(2*\x,2.5*\y) node[fnode,fill=yellow!50!gray](n5){};
					\draw(1*\x,3*\y) node[fnode,fill=yellow!50!gray](n6){};
					\draw(3*\x,4*\y) node{};
					\draw(n1) -- (n4);
					\draw(n4) -- (n6);
					\draw(n2) -- (n5);
					\draw[thick,dashed,yellow!50!gray](n2) -- (n4);
					\draw[thick,dashed,yellow!50!gray](n3) -- (n5);
					\draw[thick,dashed,yellow!50!gray](n5) -- (n6);
				\end{tikzpicture}};
				\draw(2*\dx,1.55*\dy) node[scale=.8,text width=2.5cm,text centered]{extension of $\JIPoset\bigl(\Tamari(3)\bigr)$};
				\draw(1*\dx,1*\dy) node[text width=5cm,scale=.5]{$M(x,y) = x^{3}y^{3}-6x^{2}y^{3}+6x^{2}y^{2}+10xy^{3}-16xy^{2}-5y^{3}+6xy+10y^{2}-6y+1$};
				\draw(1*\dx,.8*\dy) node[scale=.8]{$M$-triangle};
				\draw(2*\dx,1*\dy) node[text width=5cm,scale=.5]{$H(x,y) = x^{3}y^{3}+3x^{2}y^{2}+2x^{2}y+x^{2}+3xy+3x+1$};
				\draw(2*\dx,.8*\dy) node[scale=.8]{$H$-triangle};
				\draw(1.53*\dy,.8*\dy) node{$\longleftrightarrow$};
				\draw(1*\dx,1.25*\dy) node[rotate=90]{$\longleftarrow$};
				\draw(2*\dx,1.25*\dy) node[rotate=90]{$\longleftarrow$};
				\draw(2*\dy,2.3*\dy) node[rotate=90]{$\longleftarrow$};
				\draw(1.5*\dy,2.3*\dy) node[rotate=45]{$\longleftarrow$};
				\draw(1.45*\dy,2.9*\dy) node{$\longrightarrow$};
			\end{tikzpicture}};
	\end{tikzpicture}
	\caption{Three polytopes with associated lattices.}
	\label{fig:polytopes_lattices}
\end{figure}\end{landscape}

\subsection{Interval enumeration in shuffle posets}

For $a,b\geq 0$ we consider the polynomial
\begin{displaymath}
	G_{a,b}(x,y) \defs \sum_{\substack{\ab,\bb\in\Shuffle(a,b)\\\ab\preceq\bb}}x^{\rk(\ab)}y^{a+b-\rk(\bb)}.
\end{displaymath}
If $a=n$ and $b=0$, then $\ShufflePoset(a,b)\cong\Bool(n)$, and by Remark~\ref{rem:boolean_fh_triangle}, we have
\begin{displaymath}
	G_{n,0}(x,y) = (x+y+1)^{n} = F_{\Bool(n)}(x,y).
\end{displaymath}
For $a<n$ and $b>0$, the $G$-triangle does no longer coincide with the $F$-triangle.  We conjecture the following explicit formula for the case $a=n-1$ and $b=1$, which can be verified for $a=2,b=1$ in Figure~\ref{fig:shuffle_21}.

\begin{conjecture}
	For $n>0$ we have
	\begin{displaymath}
		G_{n-1,1}(x,y) = (x+y+1)^{n-2}\Bigl(x^{2}+y^{2}+1+(n+1)(xy+x+y)\Bigr).
	\end{displaymath}
\end{conjecture}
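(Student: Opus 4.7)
The plan is to prove the conjecture by direct enumeration, splitting pairs $(\ab,\bb)\in\Shuffle(n-1,1)^{2}$ with $\ab\preceq\bb$ according to whether the letter $\one$ appears in $\ab$ and in $\bb$. Each element of $\Shuffle(n-1,1)$ is encoded by a subset $T_{\ab}\subseteq\{2,3,\ldots,n\}$ (the $A$-letters occurring in $\ab$, necessarily in increasing order) together with, if present, a position $p_{\ab}\in\{0,1,\ldots,|T_{\ab}|\}$ for $\one$. By the rank formula derived in the proof of Proposition~\ref{prop:hochschild_clo_ranks}, $\rk(\ab)$ equals $n-1-|T_{\ab}|$ when $\ab$ avoids $\one$ and $n-|T_{\ab}|$ when $\ab$ contains $\one$.

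Three cases remain (the fourth is forbidden since moving up in $\ShufflePoset(n-1,1)$ never deletes $\one$): \textbf{(A)} neither $\ab$ nor $\bb$ contains $\one$; \textbf{(B)} both do; \textbf{(C)} only $\bb$ does. In Case~(A), $\ab\preceq\bb$ is equivalent to $T_{\bb}\subseteq T_{\ab}$, and summing $x^{n-1-|T_{\ab}|}y^{|T_{\bb}|+1}$ over nested subsets of $\{2,\ldots,n\}$ yields $y(x+y+1)^{n-1}$ by the binomial theorem. In Case~(B), after deleting some letters of $T_{\ab}$ the position of $\one$ in $\bb$ is uniquely forced (it equals $|T_{\bb}\cap\{t_{1},\ldots,t_{p_{\ab}}\}|$ where $T_{\ab}=\{t_{1}<\cdots<t_{k}\}$), so the relation again reduces to $T_{\bb}\subseteq T_{\ab}$, but a factor $(|T_{\ab}|+1)$ appears from the choice of $p_{\ab}$; using $\sum_{k}\binom{n-1}{k}k\,z^{k}=(n-1)z(1+z)^{n-2}$ this case contributes $x(x+y+1)^{n-1}+(n-1)x(1+y)(x+y+1)^{n-2}$. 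In Case~(C) the factor $(|T_{\bb}|+1)$ appears from the choice of $p_{\bb}$, and a parallel computation gives $(x+y+1)^{n-1}+(n-1)y(x+y+1)^{n-2}$.

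Adding the three contributions and factoring yields
\begin{displaymath}
(x+y+1)^{n}+(n-1)(x+y+1)^{n-2}(x+xy+y)=(x+y+1)^{n-2}\bigl((x+y+1)^{2}+(n-1)(xy+x+y)\bigr),
\end{displaymath}
and expanding $(x+y+1)^{2}+(n-1)(xy+x+y)=x^{2}+y^{2}+1+(n+1)(xy+x+y)$ produces the claimed identity. The main obstacle will be the bookkeeping in Case~(B): one has to verify carefully that for $\ab=t_{1}\cdots t_{p_{\ab}}\one t_{p_{\ab}+1}\cdots t_{k}$, each subset $T_{\bb}\subseteq T_{\ab}$ determines a unique $\bb\succeq\ab$ still containing $\one$, and conversely that every such $\bb$ arises this way; once this bijection is in place, the remaining binomial manipulations are routine.
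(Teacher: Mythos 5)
The paper offers no proof of this statement---it is left as an open conjecture in Section~\ref{sec:open_questions}---so there is nothing to compare your argument against; what you have written is, as far as I can check, an actual proof of the conjecture. The decomposition into the three cases is legitimate, and the one step you flagged as delicate does go through: in Case~(B) the only operations available above a word already containing $\one$ are deletions of $A$-letters (a second $\one$ cannot be added, since the restriction to $B$ must remain a subword of the single-letter word $\one$), each such deletion preserves the relative order of the surviving letters, and conversely any restriction of $\ab$ to $T_{\bb}\cup\{\one\}$ with $T_{\bb}\subseteq T_{\ab}$ is reached by deleting the letters of $T_{\ab}\setminus T_{\bb}$ one at a time. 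Hence $\bb\succeq\ab$ in Case~(B) is indeed equivalent to $T_{\bb}\subseteq T_{\ab}$ with the position of $\one$ forced, giving the factor $|T_{\ab}|+1$ from the free choice of $p_{\ab}$ only. I have reworked the three sums: Case~(A) gives $y(x+y+1)^{n-1}$, Case~(B) gives $x(x+y+1)^{n-1}+(n-1)x(1+y)(x+y+1)^{n-2}$, Case~(C) gives $(x+y+1)^{n-1}+(n-1)y(x+y+1)^{n-2}$, and their total is $(x+y+1)^{n}+(n-1)(x+y+1)^{n-2}(xy+x+y)$, which expands to the claimed formula. As a consistency check, setting $x=y=1$ yields $3^{n-1}(n+2)$, which agrees with the interval count $\ZetaPol_{\ShufflePoset(n-1,1)}(3)$ obtained from Theorem~\ref{thm:shuffle_poset_invariants}, and the cases $n=1,2,3$ can be verified directly (for $n=3$ against Figure~\ref{fig:shuffle_21}). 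Two small presentational points: you should state explicitly that the $y$-exponent $n-\rk(\bb)$ equals $|T_{\bb}|+1$ or $|T_{\bb}|$ according as $\bb$ avoids or contains $\one$, since this is what feeds the binomial sums; and the justification that Case~(B) cannot degenerate (no way to delete $\one$ or change its relative position) deserves a sentence of its own rather than being folded into the parenthetical.
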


\subsection{The geometric structure of partial cores}

By construction, the Hochschild lattice $\Hoch(n)$ arises as an orientation of the $1$-skeleton of the freehedron $\Free(n)$.  Therefore, the nonempty faces of $\Free(n)$ correspond bijectively to the partial cores of $\Hoch(n)$.  Can we equip the set $\CP\bigl(\Hoch(n)\bigr)$ with an ``intersection'' operation such that $\CP\bigl(\Hoch(n)\bigr)\cup\{\emptyset\}$ is combinatorially isomorphic to $\Free(n)$?  

More generally, given a finite lattice $\Lattice$, under what conditions is $\CP(\Lattice)\cup\{\emptyset\}$ a cell complex?

What is the connection between the core label order of a semidistributive lattice $\Lattice$ and the containment order on $\CP(\Lattice)$, determined by containment of intervals?

\section*{Acknowledgements}
	\label{sec:acknowledgements}
I want to thank Camille Combe for interesting discussions on the Hochschild lattice and cubical lattices.

\begin{bibdiv}
\begin{biblist}
\bib{armstrong09generalized}{article}{
      author={Armstrong, Drew},
       title={Generalized noncrossing partitions and combinatorics of {C}oxeter groups},
        date={2009},
     journal={Memoirs of the American Mathematical Society},
      volume={202},
}
\bib{athanasiadis07on}{article}{
      author={Athanasiadis, Christos~A.},
       title={On some enumerative aspects of generalized associahedra},
        date={2007},
     journal={European Journal of Combinatorics},
      volume={28},
       pages={1208\ndash 1215},
}
\bib{bancroft11shard}{article}{
      author={Bancroft, Erin},
       title={The shard intersection order on permutations},
        date={2011},
      eprint={arXiv:1103.1910},
}
\bib{barnard19canonical}{article}{
      author={Barnard, Emily},
       title={The canonical join complex},
        date={2019},
     journal={The Electronic Journal of Combinatorics},
      volume={26},
       pages={Research paper P1.24, 25 pages},
}
\bib{barnard20the}{article}{
      author={Barnard, Emily},
       title={The canonical join complex of the {T}amari lattice},
        date={2020},
     journal={Journal of Combinatorial Theory, Series A},
      volume={174},
       pages={Research paper 105207, 26 pages},
}
\bib{bennett94two}{article}{
      author={Bennett, Mary K.},
      author={Birkhoff, Garrett},
       title={Two families of {N}ewman lattices},
        date={1994},
     journal={Algebra Universalis},
      volume={32},
       pages={115\ndash 144},
}
\bib{bjorner97shellable}{article}{
      author={Bj{\"o}rner, Anders},
      author={Wachs, Michelle~L.},
       title={Shellable nonpure complexes and posets {II}},
        date={1997},
     journal={Transactions of the American Mathematical Society},
      volume={349},
       pages={3945\ndash 3975},
}
\bib{ceballos21fh}{article}{
      author={Ceballos, Cesar},
      author={M{\"u}hle, Henri},
       title={$F$- and $H$-triangles for $\nu$-associahedra and a generalization of {K}lee's {D}ehn--{S}ommerville relations},
        date={2021},
      eprint={arXiv:2103.04769},
}
\bib{ceballos19sweak}{article}{
      author={Ceballos, Cesar},
      author={Pons, Viviane},
       title={The $s$-weak order and $s$-permutohedra},
        date={2019},
     journal={S{\'e}minaire Lotharingien de Combinatoire},
      volume={82B},
       pages={Conference paper \#76, 12 pages},
        note={Proceedings of the 31st Conference on Formal Power Series and Algebraic Combinatorics},
}
\bib{chapoton04enumerative}{article}{
      author={Chapoton, Fr{\'e}d{\'e}ric},
       title={Enumerative properties of generalized associahedra},
        date={2004},
     journal={S{\'e}minaire Lotharingien de Combinatoire},
      volume={51},
       pages={Research article B51b, 16 pages},
}
\bib{chapoton06sur}{article}{
      author={Chapoton, Fr{\'e}d{\'e}ric},
       title={Sur le nombre de r{\'e}flexions pleines dans les groupes de {C}oxeter finis},
        date={2006},
     journal={Bulletin of the Belgian Mathematical Society},
      volume={13},
       pages={585\ndash 596},
}
\bib{chapoton20some}{article}{
      author={Chapoton, Fr{\'e}d{\'e}ric},
       title={Some properties of a new partial order on {D}yck paths},
        date={2020},
     journal={Algebraic Combinatorics},
      volume={3},
       pages={433\ndash 463},
}
\bib{clifton18canonical}{article}{
      author={Clifton, Alexander},
      author={Dillery, Peter},
      author={Garver, Alexander},
       title={The canonical join complex for biclosed sets},
        date={2018},
     journal={Algebra Universalis},
      volume={79},
       pages={Research article 84, 22 pages},
}
\bib{combe20geometric}{article}{
      author={Combe, Camille},
       title={A geometric and combinatorial exploration of {H}ochschild lattices},
        date={2020},
      eprint={arXiv:2007.00048},
}
\bib{day79characterizations}{article}{
      author={Day, Alan},
       title={Characterizations of finite lattices that are bounded-homomorphic images or sublattices of free lattices},
        date={1979},
     journal={Canadian Journal of Mathematics},
      volume={31},
       pages={69\ndash 78},
}
\bib{freese95free}{book}{
      author={Freese, Ralph},
      author={Je{\v{z}}ek, Jaroslav},
      author={Nation, James~B.},
       title={{Free Lattices}},
   publisher={American Mathematical Society},
     address={Providence},
        date={1995},
}
\bib{garver17enumerative}{article}{
      author={Garver, Alexander},
      author={McConville, Thomas},
       title={Enumerative properties of grid-associahedra},
        date={2017},
      eprint={arXiv:1705.04901},
}
\bib{garver18oriented}{article}{
      author={Garver, Alexander},
      author={McConville, Thomas},
       title={Oriented flip graphs of polygonal subdivisions and noncrossing tree partitions},
        date={2018},
     journal={Journal of Combinatorial Theory (Series A)},
      volume={158},
       pages={126\ndash 175},
}
\bib{geyer94on}{article}{
      author={Geyer, Winfried},
       title={On {T}amari lattices},
        date={1994},
     journal={Discrete Mathematics},
      volume={133},
       pages={99\ndash 122},
}
\bib{greene88posets}{article}{
      author={Greene, Curtis},
       title={Posets of shuffles},
        date={1988},
     journal={Journal of Combinatorial Theory, Series A},
      volume={47},
       pages={191\ndash 206},
}
\bib{krattenthaler19rank}{article}{
      author={Krattenthaler, Christian},
      author={M{\"u}hle, Henri},
       title={The rank enumeration of certain parabolic non-crossing partitions},
        date={2019},
      eprint={arXiv:1910.13244},
}
\bib{kreweras72sur}{article}{
      author={Kreweras, Germain},
       title={Sur les partitions non crois{\'e}es d'un cycle},
        date={1972},
     journal={Discrete Mathematics},
      volume={1},
       pages={333\ndash 350},
}
\bib{markowsky92primes}{article}{
      author={Markowsky, George},
       title={Primes, irreducibles and extremal lattices},
        date={1992},
     journal={Order},
      volume={9},
       pages={265\ndash 290},
}
\bib{muehle18noncrossing}{article}{
      author={M{\"u}hle, Henri},
       title={Noncrossing arc diagrams, {T}amari lattices, and parabolic quotients of the symmetric group},
        date={2018},
      eprint={arXiv:1809.01405},
}
\bib{muehle19the}{article}{
      author={M{\"u}hle, Henri},
       title={The core label order of a congruence-uniform lattice},
        date={2019},
     journal={Algebra Universalis},
      volume={80},
       pages={Research paper 10, 22 pages},
}
\bib{muehle21distributive}{article}{
      author={M{\"u}hle, Henri},
       title={Distributive lattices have the intersection property},
        date={2021},
     journal={Mathematica Bohemica},
      volume={146},
       pages={7\ndash 17},
}
\bib{reading11noncrossing}{article}{
      author={Reading, Nathan},
       title={Noncrossing partitions and the shard intersection order},
        date={2011},
     journal={Journal of Algebraic Combinatorics},
      volume={33},
       pages={483\ndash 530},
}
\bib{reading15noncrossing}{article}{
      author={Reading, Nathan},
       title={Noncrossing arc diagrams and canonical join representations},
        date={2015},
     journal={SIAM Journal on Discrete Mathematics},
      volume={29},
       pages={736\ndash 750},
}
\bib{petersen13on}{article}{
      author={Petersen, T. Kyle},
       title={On the shard intersection order of a {C}oxeter group},
        date={2013},
     journal={SIAM Journal on Discrete Mathematics},
      volume={27},
       pages={1880\ndash 1912},
}
\bib{provan80decompositions}{article}{
      author={Provan, J. Scott},
      author={Billera, Louis J.},
       title={Decompositions of simplicial complexes related to diameters of convex polyhedra},
        date={1980},
     journal={Mathematics of Operations Research},
      volume={5},
       pages={576\ndash 594},
}
\bib{rivera18combinatorial}{article}{
      author={Rivera, Manuel},
      author={Saneblidze, Samson},
       title={A combinatorial model for the free loop fibration},
        date={2018},
     journal={Bulletin of the London Mathematical Society},
      volume={50},
       pages={1085\ndash 1101},
}
\bib{rota64foundations}{article}{
      author={Rota, Gian-Carlo},
       title={On the foundations of combinatorial theory {I}: {T}heory of {M}{\"o}bius functions},
        date={1964},
     journal={Zeitschrift f{\"u}r Wahrscheinlichkeitstheorie und verwandte Gebiete},
      volume={2},
       pages={340\ndash 368},
}
\bib{saneblidze09bitwisted}{article}{
      author={Saneblidze, Samson},
       title={The bitwisted {C}artesian model for the free loop fibration},
        date={2009},
     journal={Topology and its Applications},
      volume={156},
       pages={897\ndash 910},
}
\bib{sloane}{misc}{
	  author={Neil J. A. Sloane},
	   title={{The Online Encyclopedia of Integer Sequences}},
	    note={\url{http://www.oeis.org}},
}
\bib{stasheff63homotopy}{article}{
      author={Stasheff, James D.},
       title={Homotopy associativity of $H$-spaces {I}},
        date={1963},
     journal={Transactions of the American Mathematical Society},
      volume={138},
       pages={275\ndash 292},
}
\bib{tamari51monoides}{thesis}{
      author={Tamari, Dov},
       title={Mono{\"i}des pr{\'e}ordonn{\'e}s et cha{\^i}nes de {M}alcev},
        type={Th{\`e}se de math{\'e}matiques},
 institution={Universit{\'e} de Paris},
        date={1951},
}
\bib{thiel14on}{article}{
      author={Thiel, Marko},
       title={On the ${H}$-triangle of generalised nonnesting partitions},
        date={2014},
     journal={European Journal of Combinatorics},
      volume={39},
       pages={244\ndash 255},
}
\bib{thomas19rowmotion}{article}{
      author={Thomas, Hugh},
      author={Williams, Nathan},
       title={Rowmotion in slow motion},
        date={2019},
     journal={Proceedings of the London Mathematical Society},
      volume={119},
       pages={1149\ndash 178},
}

\end{biblist}
\end{bibdiv}

\end{document}